\DeclareMathAlphabet{\mathcalligra}{T1}{calligra}{m}{n}
\DeclareMathAlphabet{\mathpzc}{OT1}{pzc}{m}{it}
\newtheorem{theoremABC}{Theorem}
\newtheorem{theorem}{Theorem}[section]
\newtheorem{corollary}[theorem]{Corollary}
\newtheorem{lemma}[theorem]{Lemma}
\newtheorem{proposition}[theorem]{Proposition}
\theoremstyle{definition}
\newtheorem{definition}[theorem]{Definition}
\newtheorem{hypothesis}[theorem]{Hypothesis}
\newtheorem{remark}[theorem]{Remark}
\newtheorem{example}[theorem]{Example}
\theoremstyle{remark}
\newcommand{\A}{{\mathbb{A}}}
\newcommand{\E}{{\mathbb{E}}}
\newcommand{\K}{{\mathbb{K}}}
\newcommand{\N}{{\mathbb{N}}}
\newcommand{\R}{{\mathbb{R}}}
\newcommand{\V}{{\mathbb{V}}}
\newcommand{\Wbb}{{\mathbb{W}}}
\newcommand{\Bb}{{\mathcal{B}}}
\newcommand{\Dd}{{\mathcal{D}}}
\newcommand{\Ee}{{\mathcal{E}}}
\newcommand{\Ff}{{\mathcal{F}}}
\newcommand{\Ll}{{\mathcal{L}}}   
\newcommand{\Mm}{{\mathcal{M}}}   
\newcommand{\Nn}{{\mathcal{N}}}
\newcommand{\Oo}{{\mathcal{O}}}
\newcommand{\Pp}{{\mathcal{P}}}
\newcommand{\Uu}{{\mathcal{U}}}
\newcommand{\Vv}{{\mathcal{V}}}
\newcommand{\Ww}{{\mathcal{W}}}
\newcommand{\im}{{\rm im\, }}             
\newcommand{\id}{{\rm id}}                
\newcommand{\Id}{{\rm Id}}
\newcommand{\codim}{{\rm codim\, }}       
\newcommand{\diag}{{\rm diag}}            
\newcommand{\cl}{{\rm cl\, }}             
\newcommand{\cgraph}[1]{\Gamma_{\kern-.5ex{}#1}}     
\newcommand{\Hess}{\mathrm{Hess}}          
\newcommand{\CAP}{\mathop{\cap}}           
\renewcommand{\d}{{\rm d}}
\newcommand{\ev}{{\rm ev}}
\newcommand{\norm}{{\rm norm}}
\newcommand{\eps}{{\varepsilon}}
\newcommand{\INNER}[2]{\left\langle #1, #2\right\rangle}
\newcommand{\mbf}[1]{\text{\boldmath $#1$}}  
\def\NABLA#1{{\mathop{\nabla\kern-.5ex\lower1ex\hbox{$#1$}}}}
\def\Nabla#1{\nabla\kern-.5ex{}_{#1}}
\def\Tabla#1{\Tilde\nabla\kern-.5ex{}_{#1}}
\def\Babla#1{\widebar\nabla\kern-.5ex{}_{#1}}
\def\abs#1{\mathopen|#1\mathclose|}   
\def\Abs#1{\left|#1\right|}            
\def\norm#1{\mathopen\|#1\mathclose\|}
\def\Norm#1{\left\|#1\right\|}
\renewcommand{\Tilde}{\widetilde}
\newcommand{\p}{{\partial}}
\newcommand{\INTO}{\hookrightarrow}              
\renewcommand{\1}{{{\mathchoice {\rm 1\mskip-4mu l} {\rm 1\mskip-4mu l}
{\rm 1\mskip-4.5mu l} {\rm 1\mskip-5mu l}}}}
\newlength\eqshift
\renewcommand\theequation{\thesection.\arabic{equation}}
\let\savetheequation\theequation
\renewcommand*\env@matrix[1][\arraystretch]{%
  \edef\arraystretch{#1}%
  \hskip -\arraycolsep
  \let\@ifnextchar\new@ifnextchar
  \array{*\c@MaxMatrixCols c}}
\let\save@mathaccent\mathaccent
\newcommand*\if@single[3]{%
  \setbox0\hbox{${\mathaccent"0362{#1}}^H$}%
  \setbox2\hbox{${\mathaccent"0362{\kern0pt#1}}^H$}%
  \ifdim\ht0=\ht2 #3\else #2\fi
  }
\newcommand*\rel@kern[1]{\kern#1\dimexpr\macc@kerna}
\newcommand*\widebar[1]{\@ifnextchar^{{\wide@bar{#1}{0}}}{\wide@bar{#1}{1}}}
\newcommand*\wide@bar[2]{\if@single{#1}{\wide@bar@{#1}{#2}{1}}{\wide@bar@{#1}{#2}{2}}}
\newcommand*\wide@bar@[3]{%
  \begingroup
  \def\mathaccent##1##2{%
    \let\mathaccent\save@mathaccent
    \if#32 \let\macc@nucleus\first@char \fi
    \setbox\z@\hbox{$\macc@style{\macc@nucleus}_{}$}%
    \setbox\tw@\hbox{$\macc@style{\macc@nucleus}{}_{}$}%
    \dimen@\wd\tw@
    \advance\dimen@-\wd\z@
    \divide\dimen@ 3
    \@tempdima\wd\tw@
    \advance\@tempdima-\scriptspace
    \divide\@tempdima 10
    \advance\dimen@-\@tempdima
    \ifdim\dimen@>\z@ \dimen@0pt\fi
    \rel@kern{0.6}\kern-\dimen@
    \if#31
      \overline{\rel@kern{-0.6}\kern\dimen@\macc@nucleus\rel@kern{0.4}\kern\dimen@}%
      \advance\dimen@0.4\dimexpr\macc@kerna
      \let\final@kern#2%
      \ifdim\dimen@<\z@ \let\final@kern1\fi
      \if\final@kern1 \kern-\dimen@\fi
    \else
      \overline{\rel@kern{-0.6}\kern\dimen@#1}%
    \fi
  }%
  \macc@depth\@ne
  \let\math@bgroup\@empty \let\math@egroup\macc@set@skewchar
  \mathsurround\z@ \frozen@everymath{\mathgroup\macc@group\relax}%
  \macc@set@skewchar\relax
  \let\mathaccentV\macc@nested@a
  \if#31
    \macc@nested@a\relax111{#1}%
  \else
    \def\gobble@till@marker##1\endmarker{}%
    \futurelet\first@char\gobble@till@marker#1\endmarker
    \ifcat\noexpand\first@char A\else
      \def\first@char{}%
    \fi
    \macc@nested@a\relax111{\first@char}%
  \fi
  \endgroup
}
\long\def\symbolfootnote[#1]#2{\begingroup%
\def\thefootnote{\fnsymbol{footnote}}\footnote[#1]{#2}\endgroup}
\begin{document}
\sloppy

\author{\quad Urs Frauenfelder \quad \qquad\qquad
             Joa Weber\footnote{
  Email: urs.frauenfelder@math.uni-augsburg.de
  \hfill
  joa@unicamp.br
  }
        %
        %
    \\
    Universit\"at Augsburg \qquad\qquad
    UNICAMP
}

\title{Local gluing}

\date{\today}

\maketitle 
%


%
%

%





\begin{abstract}
In 
   \todo{actualize references?} 
the local gluing one glues local neighborhoods around the critical
point of the stable and unstable manifolds to gradient flow lines
defined on a finite time interval $[-T,T]$ for large $T$.
If the Riemannian metric around the critical point is locally
Euclidean, the local gluing map can be written down explicitly.
In the non-Euclidean case the construction of the local gluing map
requires an intricate version of the implicit function theorem.

In this paper we explain a functional analytic approach
how the local gluing map can be defined.
For that we are working on infinite dimensional path spaces 
and also interpret stable and unstable manifolds as submanifolds of
path spaces.
The advantage of this approach is that similar functional analytical
techniques can as well be generalized to infinite dimensional
versions of Morse theory, for example Floer theory.

A crucial ingredient is the Newton-Picard map. We work out an abstract
version of it which does not involve troublesome quadratic estimates.
\end{abstract}

\tableofcontents

\boldmath
\section{Introduction and main results}
\label{sec:introduction}
\unboldmath

\boldmath
\subsection{Local gluing map for the Euclidean metric}
\unboldmath

Consider a diagonal matrix with monotone decreasing
diagonal entries
\begin{equation*}
   A=\diag(a_1,\dots,a_n),\qquad
   a_1\ge \dots \ge a_{n-k}>0>
   a_{n-k+1}\ge \dots\ge a_n .
\end{equation*}
Consider the smooth function given by the euclidean inner product
\begin{equation}\label{eq:f-eucl}
   f\colon\R^n\to\R,\quad
   z\mapsto \tfrac12 \INNER{z}{Az} .
\end{equation}
This function is Morse and has a unique critical point at the origin
of Morse index $k$.
The gradient of $f$ for the standard metric on $\R^n$ is
$\Nabla{}f(z)=Az$.
Hence the downward gradient flow for time $s$ is given by
$$
   \varphi^{-\Nabla{} f}_s (z)
   =e^{-sA} z
   =\left(e^{-sa_1} z_1,\dots, e^{-sa_n} z_n\right) .
$$
The stable and the unstable manifold of the origin
are given by the sets
\[
   W^{\rm s}=\R^{n-k}\times \{0\},\qquad
   W^{\rm u}=\{0\}\times \R^k .
\]
Each point $z_0=(x_0,0)\in \R^{n-k}\times \{0\}$
determines an element $s\mapsto w_+(s):=e^{-sA} z_0$ in the function
space $W^{1,2}([0,\infty),\R^n)$.
Each point $z_0=(0,y_0)\in \{0\}\times \R^k$
determines an element $s\mapsto w_-(s):=e^{-sA} z_0$ in the function
space $W^{1,2}((-\infty,0],\R^n)$.

In our functional analytic approach to local gluing it is more convenient
for us to think of the stable and the unstable manifold as function space subsets
\[
   \Ww^{\rm s}\subset W^{1,2}([0,\infty),\R^n),\qquad
   \Ww^{\rm u}\subset W^{1,2}((-\infty,0],\R^n).
\]
A further advantage of this point of view is that many techniques
discussed in this article can be generalized from $\R^n$
to the Hardy approach of gluing in the infinite
dimensional case of Floer homology~\cite{Simcevic:2014a}.

With the interpretation of stable and unstable manifolds as function
spaces we can easily recover the traditional interpretation as subsets
of $\R^n$ using the evaluation maps
\[
   \ev_+\colon \Ww^{\rm s}\to \R^{n-k}\times \{0\},\quad
   w_+\mapsto w_+(0)
\]
and
\[
   \ev_-\colon \Ww^{\rm u}\to \{0\}\times \R^k,\quad
   w_-\mapsto w_-(0) .
\]
Given $T>0$, let $\Mm_T\subset W^{1,2}([-T,T],\R^n)$
be the subset of all finite time gradient flow lines $w\colon[-T,T]\to\R^n$.
Note that since in the euclidean case the gradient flow is linear
and a gradient flow line is uniquely determined by its initial
condition, the space $\Mm_T$ is an $n$-dimensional linear subspace
of the infinite dimensional function space $W^{1,2}([-T,T],\R^n)$.

In the \textbf{euclidean case}, that is $\R^n$ endowed with the standard
metric, there are natural linear isomorphisms
$$
   \Gamma_T\colon \Ww^{\rm s}\times \Ww^{\rm u}\to \Mm_T
$$
called the \textbf{local gluing maps} and
given at each time $s\in[-T,T]$ by
\begin{equation}\label{eq:Gamma_T-EUCL}
   \Gamma_T(w_+,w_-)(s)
   =e^{-(s+T)A} w_+(0) + e^{(T-s)A} w_-(0) .
\end{equation}
Consider the evaluation map defined by
\[
   \ev_T\colon W^{1,2}([-T,T],\R^n)\to\R^n\times\R^n,\quad
   w\mapsto \left(w(-T),w(T)\right) .
\]
The composition of the local gluing maps $\Gamma_T$
with the evaluation map $\ev_T$
is a \underline{linear} map, namely
\[
   \ev_T\circ\Gamma_T(w_+,w_-)
   =\left(w_+(0)+e^{2TA}w_-(0),w_-(0)+e^{-2TA}w_+(0)\right) .
\]
Since $w_-$ is in the unstable manifold and $w_+$ in the stable, 
both limits are zero
\[
   \lim_{T\to\infty} e^{2TA} w_-(0)=0
   ,\qquad
   \lim_{T\to\infty} e^{-2TA} w_+(0)=0 .
\]
Therefore it holds that
$\lim_{T\to\infty} \ev_T\circ\Gamma_T=\ev$
where
\[
   \ev=(\ev_+,\ev_-)\colon \Ww^{\rm s}\times \Ww^{\rm u}\to \R^n\times\R^n
   ,\quad
   (w_+,w_-)\mapsto \left(w_+(0),w_-(0)\right) .
\]

\boldmath
\subsection{Local gluing map for a general Riemannian metric}
\unboldmath

Given a general Morse function $f$ on a finite dimensional manifold,
by the Morse Lemma one can always find locally around each critical
point coordinates such that $f$ has the form~(\ref{eq:f-eucl})
after subtracting the critical value. In fact, it is even possible,
after some additional scaling, to assume that all diagonal entries of
the matrix $A$ are either $1$ or $-1$. In infinite dimension this is
usually not possible and therefore we don't use this fact.
\\
Unfortunately, even in finite dimension,
it is in general not possible to assume that in Morse
coordinates the Riemannian metric is standard as well.
Indeed curvature is an obstruction.

\smallskip
In this article we explain, based on a special version of Newton-Picard iteration,
a functional analytic construction for
\textbf{local gluing maps \boldmath$\gamma_T$} in the curved case.
In sharp contrast to the Euclidean version $\Gamma_T$, the local gluing maps $\gamma_T$ are
in general not linear.
However, still some of the major properties of the local gluing maps
$\Gamma_T$ in the flat case are preserved in the general case.
More precisely, we have the following theorem.
\begin{figure} 
  \centering
  \includegraphics
                             {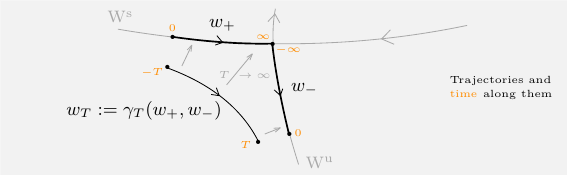}
  \caption{Convergence of local gluing 
  $\ev_T\circ\gamma_T(w_+,w_-)
  \stackrel{T\to\infty}{\longrightarrow}\ev(w_+,w_-)$}
  \label{fig:fig-conv}
\end{figure}
\begin{theoremABC}[Local gluing]\label{thm:main}
There are open neighborhoods $\Uu_+$ and $\Uu_-$ of the origin
in the stable and unstable manifold
and gluing maps $\gamma_T\colon \Uu_+\times\Uu_-\to\Mm_T$ for $T\ge T_0$,
where $\Mm_T$ is the space of downward gradient flow lines on the finite time
interval $[-T,T]$, which have the following properties.
\begin{itemize}
\item[a)]
  For every $T\ge T_0$ the gluing map $\gamma_T$ is a diffeomorphism
  onto its image
\item[b)]
  In the limit $T\to\infty$ in the $C^\infty$ topology the diagram
\begin{equation}\label{eq:comm-diag-C0}
\begin{tikzcd} [column sep=tiny] 
\Ww^{\rm s}\times \Ww^{\rm u}\;\;\supset
&
\Uu_+\times\Uu_-
\arrow[rrrr, "{\ev}"]
\arrow[drr, "\gamma_T"']
  &&&&\R^n\times\R^n
\\
  &&&\Mm_T
  \arrow[urr, "\ev_T"']
\end{tikzcd} 
\end{equation}
  commutes, as illustrated by Figure~\ref{fig:fig-conv},
  where $ev$ and $\ev_T$ are the evaluation maps at the end points.
\end{itemize}
\end{theoremABC}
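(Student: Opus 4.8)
The plan is to obtain $\gamma_T$ by a Newton--Picard iteration started at an explicit pre-glued approximate gradient flow line, feeding it into the abstract Newton--Picard scheme of the preceding sections; that scheme requires only a uniform bound on an approximate right inverse together with a mild Lipschitz bound on the nonlinearity, so the bothersome quadratic estimates of the classical gluing argument are sidestepped. Throughout we work in the Morse chart in which $f(z)=\tfrac12\INNER{z}{Az}$ while the metric is a fixed smooth Riemannian metric $g$ with $g(0)=\1$, and we encode downward gradient flow lines on $[-T,T]$ as the zero set of the smooth map
\[
   \Ff_T\colon W^{1,2}([-T,T],\R^n)\TO L^2([-T,T],\R^n),\qquad
   \Ff_T(w)=\dot w+\grad f(w),
\]
where $\grad$ denotes the $g$-gradient. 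Since $D\Ff_T(w)\,v=\dot v+(D\grad f)(w)\,v$ is a first order linear ODE operator, it is surjective with $n$-dimensional kernel; hence $\Ff_T$ is a submersion and $\Mm_T=\Ff_T^{-1}(0)$ an $n$-dimensional submanifold. For $(w_+,w_-)$ in small precompact neighborhoods $\Uu_\pm$ of the origin, the half-trajectories $w_+\in\Ww^{\rm s}$ and $w_-\in\Ww^{\rm u}$ decay to $0$ at the exponential rates fixed by the spectral gap $a_{n-k}>0>a_{n-k+1}$ of $A$, and, mimicking the Euclidean formula~(\ref{eq:Gamma_T-EUCL}), we set
\[
   w_T^{\rm pre}(w_+,w_-)(s):=w_+(s+T)+w_-(s-T),\qquad s\in[-T,T].
\]
As $w_\pm$ solve $\dot w_\pm=-\grad f(w_\pm)$ and $\grad f(0)=0$, one computes that $\Ff_T(w_T^{\rm pre})(s)$ equals $\grad f\bigl(w_+(s+T)+w_-(s-T)\bigr)-\grad f\bigl(w_+(s+T)\bigr)-\grad f\bigl(w_-(s-T)\bigr)$, which a bilinear estimate bounds by $C\abs{w_+(s+T)}\,\abs{w_-(s-T)}$; the exponential decay of $w_\pm$ then gives $\norm{\Ff_T(w_T^{\rm pre})}_{L^2}\le Ce^{-cT}$ for some $c>0$, with the analogous bound for every $(w_+,w_-)$-derivative, uniformly on $\Uu_+\times\Uu_-$.

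The next step is the construction of the right inverse. Writing $D_T:=D\Ff_T(w_T^{\rm pre})$, we need a right inverse $Q_T$ of $D_T$, with image a fixed complement of $\ker D_T$, satisfying $\norm{Q_T}_{L^2\to W^{1,2}}\le C$ for all $T\ge T_0$. This is the classical hyperbolic gluing estimate: the coefficients of $D_T$ remain in a small neighborhood of the constant-coefficient operator $\dot v+Av$, which admits an exponential dichotomy with gap $\min_i\abs{a_i}$, so constructing $Q_T$ by integrating the linearized equation forward on $[-T,0]$ along the stable directions and backward on $[0,T]$ along the unstable directions yields the bound independently of $T$. Given $\norm{\Ff_T(w_T^{\rm pre})}_{L^2}\le Ce^{-cT}$, $\norm{Q_T}\le C$ and the uniform Lipschitz bound for $D\Ff_T$ near $w_T^{\rm pre}$, the abstract Newton--Picard theorem produces, for $T\ge T_0$, a unique zero $w_T^\infty$ of $\Ff_T$ with $w_T^\infty-w_T^{\rm pre}\in\im Q_T$ and $\norm{w_T^\infty-w_T^{\rm pre}}_{W^{1,2}}\le Ce^{-cT}$; we set $\gamma_T(w_+,w_-):=w_T^\infty\in\Mm_T$. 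Carrying out the iteration with $(w_+,w_-)$ as parameters --- the pregluing and the right inverse depend smoothly on them, with uniform bounds --- shows $\gamma_T$ is smooth with all derivatives bounded uniformly in $T\ge T_0$.

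For part b), the Sobolev embedding $W^{1,2}([-T,T])\INTO C^0$ gives $\gamma_T(w_+,w_-)=w_T^{\rm pre}(w_+,w_-)+O(e^{-cT})$ in $C^0$; since moreover $w_T^{\rm pre}(-T)=w_+(0)+w_-(-2T)=w_+(0)+O(e^{-cT})$ and $w_T^{\rm pre}(T)=w_+(2T)+w_-(0)=w_-(0)+O(e^{-cT})$, evaluation at the endpoints yields
\[
   \ev_T\circ\gamma_T(w_+,w_-)=\bigl(w_+(0),w_-(0)\bigr)+O(e^{-cT})=\ev(w_+,w_-)+O(e^{-cT}).
\]
The same estimate holds for every $(w_+,w_-)$-derivative by the uniform parametrized bounds of the previous step, so $\ev_T\circ\gamma_T\to\ev$ in $C^\infty_{\rm loc}$ as $T\to\infty$; that is, diagram~(\ref{eq:comm-diag-C0}) commutes in the limit. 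For part a), note that $\ev_T|_{\Mm_T}\colon\Mm_T\to\R^n\times\R^n$ is a smooth embedding onto an $n$-dimensional submanifold: a downward gradient flow line on $[-T,T]$ is determined by its value at $-T$, and $w(-T)\mapsto\bigl(w(-T),w(T)\bigr)$ is the graph of the time-$2T$ flow diffeomorphism. Likewise $\ev=\ev_+\times\ev_-$ embeds $\Ww^{\rm s}\times\Ww^{\rm u}$ onto the $n$-dimensional submanifold $(\R^{n-k}\times\{0\})\times(\{0\}\times\R^k)$ of $\R^n\times\R^n$. By part b), for $T$ large the map $\ev_T\circ\gamma_T$ is $C^1$-close on $\Uu_+\times\Uu_-$ to the embedding $\ev$, hence is itself an embedding onto its image, which lies inside $\ev_T(\Mm_T)$; therefore $\gamma_T=(\ev_T|_{\Mm_T})^{-1}\circ(\ev_T\circ\gamma_T)$ is a diffeomorphism onto its image, after shrinking $\Uu_\pm$ and enlarging $T_0$ as needed. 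We expect the main obstacle to be the right-inverse bound of the second paragraph together with running the Newton--Picard iteration without quadratic estimates: verifying the hypotheses of the abstract Newton--Picard package uniformly in the growing interval $[-T,T]$ is where the real work lies.
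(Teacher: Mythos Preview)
Your outline is sound and would yield a correct proof, but it diverges from the paper's construction in several deliberate ways, and one small claim is inaccurate.

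\textbf{Differences from the paper.} First, your pre-gluing is the naive sum $w_+(s+T)+w_-(s-T)$, whereas the paper uses a cutoff-based pre-gluing $\wp_T$ (formula~(\ref{eq:w_T})) that rests at the critical point on $[-1,1]$. Second, you linearize at the moving point $w_T^{\rm pre}$ and invoke an exponential-dichotomy argument for the right inverse; the paper instead fixes the single base point $x_0=0_T$, so that $D_T=\p_s+A$ has constant coefficients and the right inverse $Q_T$ is written down explicitly via variation of constants (Lemma~\ref{le:Q_T}), with the complement $\K_T$ chosen so that the infinitesimal gluing map $\Gamma_T=d\gamma_T(0_+,0_-)$ is independent of the cutoff. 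Third, for part~a) you deduce the diffeomorphism property \emph{a posteriori} from the $C^1$ convergence in part~b) together with the fact that $\ev$ and $\ev_T|_{\Mm_T}$ are embeddings; the paper instead applies a quantitative inverse function theorem (Theorem~\ref{thm:InvFT-quant}) directly, verifying its hypotheses via the explicit bounds on $\Gamma_T^{-1}$ and $\Pi_T$. Fourth, for $C^\infty$ convergence the paper packages the higher derivatives into iterated tangent maps $T^m\Ff_T$, $T^m\Nn_T$ and proves a single estimate~(\ref{eq:wonder-prop-diff-T}) for $T\Nn-\Id$ that can be iterated; your ``same estimate for every derivative'' is correct in spirit but the paper's tangent-map formalism is what makes the induction clean.

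\textbf{What each approach buys.} Your route is shorter and more conceptual: deriving a) from b) via stability of embeddings is elegant. The paper's route is more explicit and self-contained: the right inverse bound is proved rather than cited, and the specific complement $\K_T$ gives the canonical identification $d\gamma_T(0_+,0_-)=\Gamma_T$ matching the Euclidean formula~(\ref{eq:Gamma_T-EUCL}), a structural point you do not recover.

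\textbf{One correction.} In the curved case $\ev$ does \emph{not} map $\Ww^{\rm s}\times\Ww^{\rm u}$ onto the linear subspace $(\R^{n-k}\times\{0\})\times(\{0\}\times\R^k)$; it maps onto the product $W^{\rm s}\times W^{\rm u}$ of the genuine (curved) stable and unstable manifolds. This does not affect your argument, since $\ev$ is still an embedding onto an $n$-dimensional submanifold, but the statement as written is only true in the Euclidean model.
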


\begin{remark}
Our construction of local gluing maps $\gamma_T$
has the following additional properties.

\begin{enumerate}\setlength\itemsep{0ex} 
\item
  In the euclidean case it holds that $\gamma_T=\Gamma_T$.
\item
  In the general Riemannian case this still continues to hold for the differential of
  $\gamma_T$ at the origin, in symbols $d\gamma_T(0_+,0_-)=\Gamma_T$.
\item
  In particular, at the infinitesimal level, our construction is
independent of any choices like the one of a cutoff function
used to construct a pre-gluing map; see~(\ref{eq:Pp_T}).
The construction of the gluing map
depends on the choice of a complement of the kernel $\E_T$ of the linearized
gradient flow equation $D_T\colon \Wbb_T\to\V_T$; see~(\ref{eq:D_T}).
There are different choices for such a complement.
Possible choices are to take the complement orthogonal with respect to
either the $L^2$ or the $W^{1,2}$ metric.
We make a different choice, so that our complement $\K_T$ is not necessarily
orthogonal, but instead has the property that 
the infinitesimal gluing map, see~(\ref{eq:Gamma_T-def}),
does not depend on the choice of the cutoff function.
\item
  Furthermore, our construction uses a version of the Newton-Picard map
which does not need quadratic estimates. 
We discuss properties of the Newton-Picard map
and its derivatives in Appendix~\ref{sec:Newton-Picard-map}.

The results in Appendix~\ref{sec:Newton-Picard-map}
are quite general, so that they should also be applicable to the
infinite dimensional version of the local gluing discussed in this article.
Namely, the general Hardy approach to gluing,
as discussed in the special case of Lagrangian Floer homology
by Tatjana {Sim\v{c}evi\'{c}}~\cite{Simcevic:2014a}.
\end{enumerate}
\end{remark}

We expect that the local gluing theorems will be useful for the construction
of flow category theories~\cite{Cohen:1995a}
by endowing, for Morse-Smale metrics,
the moduli (solution) spaces of broken gradient flow lines with
the structure of a manifold with boundary and corners
\cite{Qin:2011a,Wehrheim:2012a}.

\boldmath
\subsection{Setup -- path spaces and sections}
\label{sec:setup}
\unboldmath

Let $f\colon\R^n\to\R$ be a smooth function
such that the origin $0$ is a Morse critical point of Morse index $k$.
Suppose $g$ is a Riemannian metric on $\R^n$ which is standard at $0$,
notation $g_0$.
Let $\Hess_0 f$ be the Hessian bilinear form of $f$ at $0$.
The Hessian linear operator $A\colon\R^n\to\R^n$
   \todo[color=yellow!40]{\small $A$ is just the Hesse MATRIX since $g_0$ is standard}
of $f$ at $0$ is defined with the help of the metric $g_0$
by the formula $\Hess_0 f(z_1,z_2)=g_0(z_1,A z_2)$
for every $z_1,z_2\in\R^n$.
After a linear change of coordinates we can assume that
$A$ is a diagonal matrix with monotone decreasing
diagonal entries 
\begin{equation}\label{eq:A-diag}
   A=\diag(a_1,\dots,a_n),\qquad
   a_1\ge \dots \ge a_{n-k}>0>
   a_{n-k+1}\ge \dots\ge a_n .
\end{equation}
Consider the $g_0$-orthogonal splitting
\begin{equation}\label{eq:p}
   \R^n=\R^{n-k}\times\R^k\stackrel{p_-}{\longrightarrow}\R^k
   ,\quad (x,y)\mapsto y
   ,\qquad p_+(x,y):=x .
\end{equation}
Then the Hessian at $0$ is positive definite on
$E^+=\R^{n-k}\times\{0\}$ and negative definite on
$E^-=\{0\}\times\R^k$. The Hessian operator at $0$ is of the form
\begin{equation}\label{eq:A}
   A=\begin{pmatrix}A_+&0\\0&-A_-\end{pmatrix}
\end{equation}
where $A_+=\diag(a_1,\dots,a_{n-k})$ and
$A_-=\diag(-a_{n-k+1},\dots,-a_n)$
are positive definite diagonal matrices.
The \textbf{spectral gap} $\sigma=\sigma(A)>0$ is the smallest distance
of an eigenvalue to the origin, in symbols
\begin{equation}\label{eq:spec-gap}
   \sigma=\sigma(A):=\min_{1\le\ell\le n} \abs{a_\ell} .
\end{equation}

\boldmath
\unboldmath

Abbreviate $\R_+=(0,\infty)$.
For $T>0$ consider the Sobolev spaces
\[
   \Wbb_+=W^{1,2}([0,\infty),\R^n)
   ,\;
   \Wbb_-=W^{1,2}((-\infty,0],\R^n)
   ,\;
   \Wbb_T=W^{1,2}([-T,T],\R^n),
\]
\[
   \V_+=L^2 ([0,\infty),\R^n)
   ,\quad
   \V_-=L^2 ((-\infty,0],\R^n)
   ,\quad
   \V_T=L^2 ([-T,T],\R^n).
\]

\begin{definition}[Constant maps to the critical point]\label{def:0_+-}
Let $0_+\in\Wbb_+$ and $0_-\in\Wbb_-$,
and $0_T\in\Wbb_T$, be the constant maps to the
critical point, in symbols
\[
   0_+\colon [0,\infty)\to\R^n,\quad s\mapsto 0
   ,\qquad
   0_-\colon (\infty,0]\to\R^n,\mapsto s\mapsto 0 .
\]
Let $0_T\in\Wbb_T$ be the constant map 
$[-T,T]\to\R^n$, $s\mapsto 0$, to the critical point.
\end{definition}

For $i\in\{+,-\}\cup\R_+$ consider the map defined by
\[
   \Ff_i\colon\Wbb_i\to\V_i
   ,\quad
   w\mapsto\p_s w+\Nabla{}f(w) .
\]
The zero sets of these maps are, respectively, the stable and the unstable manifold,
and the set of gradient flow lines along the interval $[-T,T]$, in symbols
\[
   \Ww^{\rm s}:={\Ff_+}^{-1}(0_+)\subset\Wbb_+
   ,\qquad
   \Ww^{\rm u}:={\Ff_-}^{-1}(0_-)\subset\Wbb_-
   ,
\]
and the \textbf{solution space}
\[
   \Mm_T:={\Ff_T}^{-1}(0_T)
   =\{w\colon[-T,T]\stackrel{W^{1,2}}{\to}\R^n\mid \p_s w+\Nabla{}f(w)=0\}
   \subset\Wbb_T
   .
\]
The elements of the tangent spaces at the critical point
\[
   \xi\in \E^+:=T_{0_+} \Ww^{\rm s}
   ,\qquad
   \eta\in \E^-:=T_{0_-} \Ww^{\rm u}
   ,\qquad
   \zeta\in \E_T:=T_{0_T}\Mm_T
   ,
\]
are characterized by the linear autonomous ODEs~(\ref{eq:E^x}) or,
equivalently, by forward (backward) exponential
decay~(\ref{eq:xi-eta-equiv}) of $\xi=(\xi^+,0)$
(of $\eta=(0,\eta^-)$).

\smallskip
\noindent
\textbf{Notation.}
The
\todo[color=yellow!40]{
  \textbf{Spaces}\\
  $E^+$ linear point space\\
  $\mathbb{E}^+$ linear map space\\
  $\Ee^+$ manifold of maps
}
euclidean norm of $v\in\R^\ell$, $\ell\in\N$, is denoted by $\abs{v}$.

\boldmath
\subsection{Idea of proof}
\unboldmath

We construct the desired gluing map as
a family of diffeomorphisms onto their images, one diffeomorphism
for each $T\ge T_0$ given by composing two maps
\begin{equation}\label{eq:gamma_T-goal}
   \gamma_T
   \colon
   \Ww^{\rm s}\times \Ww^{\rm u}
   \supset
   \Uu_+\times \Uu_-
   \stackrel{\wp_T}{\longrightarrow} \Wbb_T
   \stackrel{\Nn_T}{\longrightarrow} \Mm_T .
\end{equation}
Here $T_0\ge 3$ is a constant and $\Uu_+\subset \Ww^{\rm s}$
and $\Uu_-\subset \Ww^{\rm u}$
are open neighborhoods of $0_+$ and $0_-$, respectively, sufficiently
small so that the image $\wp_T(\Uu_+\times \Uu_-)$ of the pre-gluing
map $\wp_T$ lies in the domain of the Newton-Picard map $\Nn_T$.

\medskip
\textbf{Newton-Picard.}
The Newton-Picard map on $X=\Wbb_T$
associates to an approximate
zero of a map, here $\Ff_T$, a true zero nearby.
More precisely, after choosing a suitable initial point $x_0$, here $0_T$,
there are three ingredients needed:

\smallskip
\noindent
1) an approximate zero $x_1$ of $\Ff_T$;
\\
2) a uniformly bounded right inverse $Q_T$ of
$D_T:=d\Ff_T(0_T) \colon\Wbb_T\to\V_T$;
\\
3) a slowly varying operator difference
$d\Ff_T(\cdot)-D_T$ near the initial point.

\smallskip
\noindent
The facts that $\Ff_T(0_T)=0$ and that $D_T$ is surjective
suggest to choose as initial point $x_0:=0_T$.
1)~To provide an approximate zero of $\Ff_T$
will be the task of the pre-gluing map as described further below.
2)~Right inverses of the linear operator $D_T\colon \Wbb_T\to\V_T$
correspond to the topological complements of $\ker D_T$.
A natural choice would be the orthogonal complement, but
we shall choose another complement, notation $\K_T$, which represents the
impossible paths for a downward gradient and makes the infinitesimal
gluing map $\Gamma_T=d\gamma_T(0_+,0_-)$ independent of the choice of
cutoff function used to define the pre-gluing map.
The corresponding right inverse $Q_T$ indeed admits a uniform bound $c$.
3)~The operator difference $d\Ff_T(\cdot)-D_T$ is usually controlled
by calculating troublesome quadratic estimates. In Appendix~\ref {sec:NP-map}
we prove continuous differentiability of a version of the
Newton-Picard map which does not require quadratic estimates.

\begin{remark}[Higher smoothness of Newton-Picard map]\label{rmk:higher}
To obtain higher smoothness we use, roughly speaking, the fact that
the supremum of the operator norm $\norm{d\Ff_T(\cdot)-D_T}$ along
smaller and smaller
balls about the initial point~$x_0$ admits bounds closer and closer to zero.
Indeed there is a monotonically decreasing
function $\delta\colon[2,\infty)\to(0,\infty)$, independent of $T$,
such that along the $\delta(\mu)$-ball about $x_0$
the map $\norm{d\Ff_T(\cdot)-D_T}$ is bounded by $1/\mu c$.
See Corollary~\ref{cor:delta} for the case of $\Ff_T$
and Remark~\ref{rem:NP-map-mu} for the abstract theory.

For iteration arguments, such as to prove higher smoothness,
tangent maps are much more suitable than differentials.
Thus we prove in Appendix~\ref{sec:NP-tangent} an estimate for the
tangent map difference $T\Nn-\Id$ and
we show that $T\Nn^F=\Nn^{TF}$, roughly speaking,
where $\Nn^F$ is the Newton-Picard map for a map $F$.
\end{remark}

\smallskip
\textbf{Pre-gluing -- approximate zero.}
Given a real $T\ge3$, called \emph{gluing parameter}, Floer's gluing
construction associates to a pair
$(w_+,w_-)\in \Ww^{\rm s}\times \Ww^{\rm u}$ of an
$(\text{incoming},\text{outgoing})$ flow trajectory the
\emph{pre-glued path} $w_T\colon[-T,T]\to\R^m$ 
defined as follows.
One decomposes the time interval $[-T,T]$ into five subintervals.
Along $[-T,-3]$ follow the backward shifted forward
flow trajectory $w_+(T+\cdot)$, then along $[-3,-1]$ interpolate with
the help of a cutoff function to the constant flow trajectory
$s\mapsto 0$ sitting at the critical point at which $w_T$ then
rests along time $[-1,1]$.
Next along time $[1,3]$ interpolate from the constant map
$s\mapsto 0$ to the forward shifted backward flow trajectory
$w_-(-T+\cdot)$ which then represents $w_T$ along the final time interval $[3,T]$.

The behavior of the pre-glued path $w_T\colon [-T,T]\to\R^n$
along the five time intervals is detailed by formula~(\ref{eq:w_T})
and illustrated by Figure~\ref{fig:fig-z_T}.
Observe that $w_T$ takes on the boundary of its domain $[-T,T]$
values that do not depend on~$T$, namely $w_+(0)$ and $w_-(0)$.
Most importantly, the pre-glued path satisfies the gradient
equation except, possibly, along the subinterval $[-3,-1]$
{\color{gray} (and $[1,3]$)}
of $[-T,T]$ along which it coincides, up to a cutoff function factor,
with the forward flow trajectory $w_+$ along $[T-3,T-1]$.
But $w_+|_{[T-3,T-1]}$ is very close to the critical point for large $T$.
Consequently uniform exponential decay of $\p_sw_+$ takes care of the
$L^2$ norm of $\Ff_T(w_T)=\p_sw_T+\Nabla{}f(w_T)$ along $[-3,-1]$;
same along $[1,3]$ where $w_-$ appears.
With this understood it follows that $w_T$ is an approximate zero of
$\Ff_T$ in the sense that
\begin{equation}\label{eq:approx-0}
   \norm{\Ff_T(w_T)}_{\Vv_T}
   \le C e^{-\eps T}
\end{equation}
whenever $T\ge 3$.
The constant $C$ serves all elements $w_\pm$
of any chosen pair of compact neighborhoods
$K_\pm$ of $0_\pm$ in the stable/unstable manifold.

\medskip
\textbf{Gluing -- smooth convergence.}
With Newton-Picard and pre-gluing in place the gluing map $\gamma_T$,
given by composition~(\ref{eq:gamma_T-goal}), is well defined.
Appendix~\ref{sec:quant-InvFT}
revisits the proof of the usual IFT explained
in~\cite[App.\,A.3]{mcduff:2004a} to extract a quantitative
version. It is applied in Section~\ref{sec:diff-image} to prove
that $\gamma_T$ is a diffeomorphism onto its image along a
sufficiently small domain, uniformly in~$T$.

In the limit $T\to\infty$ the diagram~(\ref{eq:comm-diag-C0})
commutes even after application of the $m$-fold tangent functor $T^m$.
The proof uses techniques described by Remark~\ref{rmk:higher}
and is carried out in Section~\ref{sec:Cm-convergence}.

\boldmath
\subsection*{Outline of article}
\unboldmath

Section~\ref{sec:pre-gluing} 
``Pre-gluing map $\Pp_T$ and its restriction $\wp_T$''
introduces for each parameter value $T\ge 3$ the pre-gluing map
as the \emph{linear} map
$\Pp_T\colon \Wbb_+\times\Wbb_-\to\Wbb_T$
defined by~(\ref{eq:Pp_T}), equivalently by~(\ref{eq:w_T}),
and illustrated by Figure~\ref{fig:fig-z_T}.

\smallskip
\noindent
For $0_+\in \Ww^{\rm s}$ and $0_-\in \Ww^{\rm u}$ the pre-glued path
is a true zero, more precisely
$
   \Pp_T(0_+,0_-)
   =0_T\in\Ff_T^{-1}(0)\subset \Wbb_T
$.
This motivates the expectation that pre-gluing pairs near
$(0_+,0_-)\in \Ww^{\rm s}\times \Ww^{\rm u}$ should produce approximate
zeroes.
Thus we consider the \textbf{restriction} of the pre-gluing map $\Pp_T$, notation
\begin{equation}\label{eq:wp-DEL}
   \wp_T:=\Pp_T|_{\Ww^{\rm s}\times \Ww^{\rm u}}
   \colon\Ww^{\rm s}\times\Ww^{\rm u}\to\Wbb_T .
\end{equation}
This map is smooth by linearity of $\Pp_T$.
Whereas the elements of the tangent spaces to $\Ww^{\rm s}$,
$\Ww^{\rm u}$, and $\Mm_T$ at the origins $0_+$, $0_-$, and $0_T$
(notation $\E^\pm$ and $\E_T$)
are the solutions of autonomous linear ODEs~(\ref{eq:E^x}),
at general points $w_+$, $w_-$, and $w_T$ the characterizing linear
ODE's~(\ref{eq:TW^s}) are non-autonomous.\footnote{
  whenever the Hessian operators $A_w(s)$
  along a flow trajectory $w$ depend on time~$s$
  }

\smallskip
\noindent
The linear identifications
$\theta_{w_\pm}\colon T_{w_\pm} \Ww^{\rm s/u}\to\E^\pm$,
defined via asymptotic limits,
are used to prove Theorem~\ref{thm:gluing-diffeo}
(gluing map $\gamma_T$ is diffeomorphism onto its image).

\medskip
Section~\ref{sec:infinit-gluing} ``Infinitesimal gluing''
consists of two subsections.
Subsection~\ref{sec:proj-K_T} introduces a complement $\K_T$ of the
$n$-dimensional subspace $\E_T:=T_{0_T}\Mm_T$ of $\Wbb_T$ and the
corresponding projection $P_{\E_T,\K_T}$ onto $\E_T$ along $\K_T$, notation
$$
   \Pi_T:=P_{\E_T,\K_T}\colon \Wbb_T=\K_T\oplus \E_T\to \E_T .
$$
Lemma~\ref{le:projection-onto-E} provides a formula for $\Pi_T$
and asserts that the operator norm of $\Pi_T\colon \Wbb_T\to \Wbb_T$
is bounded by a constant $d=d(a_1,a_n)$,
depending on the eigenvalues $a_1$ and $a_n$ of the Hessian $A$
in~(\ref{eq:A-diag}), but independent of $T\ge 1$.
To prove this we establish the uniform-in-$T$ Sobolev estimate
$\norm{v}_{L^\infty[-T,T]}\le 2 \norm{v}_{W^{1,2}[-T,T]}$.
Later on the estimate also enters the proof of Corollary~\ref{cor:delta}
on existence of the monotone function $\delta(\mu)$ mentioned in
Remark~\ref{rmk:higher} on higher smoothness of the Newton-Picard map.
\\
Subsection~\ref{sec:Gamma_T} introduces the infinitesimal gluing map,
namely the linear map
\begin{equation*}
   \Gamma_T:=\Pi_T\circ d\wp_T(0_+,0_-)
   \colon \E^+\times \E^-\to\Wbb_T\to \E_T .
\end{equation*}
For $\Gamma_T$ we obtain formula~(\ref{eq:Gamma_T})
which, firstly, by choice of $\K_T$, does not depend on the choice of
cutoff function $\beta$ in the pre-gluing map~(\ref{eq:Pp_T}) and, secondly,
reproduces the gluing map~(\ref{eq:Gamma_T-EUCL})
in the Euclidean model case.
Lemma~\ref{le:Gamma_T-iso} asserts that $\Gamma_T$ is an isomorphism
with inverse bounded by the constant $k:=1/(1-e^{-12\sigma})$,
independent of $T$, where $\sigma$ is the spectral
gap~(\ref{eq:spec-gap}) of the Hessian $A$.

\medskip
Section~\ref{sec:NP-map-appl} ``Newton-Picard map''
consists of three subsections in which we verify the three
ingredients 1) 2) 3) described earlier.
\\
Subsection~\ref{sec:approx-zero} shows 1) the pre-gluing provides an approximate
zero $w_T:=\wp_T(w_+,w_-)$ of $\Ff_T$ in the sense of~(\ref{eq:approx-0}).
This hinges on Appendix~\ref{sec:exponential}
where we provide suitable exponential decay uniformly in $T$.
\\
Subsection~\ref{sec:surjectivity} shows that the linearization
$D_T:=d\Ff_T(0_T)\colon\Wbb_T\to\V_T$ is surjective and 2) provides a bound
$c=c(a_1,a_n)$ uniformly in $T$ for the right inverse $Q_T$ associated
to the complement $\K_T$ of $\E_T$. Actually $\E_T=\ker D_T$.
\\
Subsection~\ref{sec:def-NP} establishes 3) a bound on the difference
$d\Ff_T(\cdot)-D_T$.
Based on Proposition~\ref{prop:wonder-prop-NEW}
we define the Newton-Picard map $\Nn_T\colon\Wbb_T\to\Wbb_T$
along a neighborhood $U_0(\delta_4)$ of the initial point $x_0:=0_T$.
Then it is shown that for $T\ge 3$ pre-gluing map $\wp_T$ takes values
in the domain of $\Nn_T$.

\medskip
Section~\ref{sec:gluing} ``Gluing map'' provides an open neighborhood
$\Uu_+\times\Uu_-\subset \Ww^{\rm s}\times \Ww^{\rm u}$ of the origin
$(0_+,0_-)$ which serves as domain for all gluing maps $\gamma_T$ with
gluing parameter $T\ge T_0$, see~(\ref{eq:cond-1}), and defined by
pre-gluing $\wp_T$ followed by Newton-Picard zero detection $\Nn_T$,
see~(\ref{eq:gamma_T-goal}).
By Lemma~\ref{le:d-gamma} the linearized gluing map at the origin
coincides with the infinitesimal gluing map
$\Gamma_T$.
\\
Subsection~\ref{sec:diff-image} ``Diffeomorphism onto image''
proves this property of the gluing maps along an open subset
$\Oo_+\times\Oo_-\subset \Uu_+\times\Uu_-$, uniformly in $T$.
This is an application of the quantitative inverse function
Theorem~\ref{thm:InvFT-quant}. Verification of~(\ref{eq:vgvh-1})
uses that $d\gamma_T|_{(0_+,0_-)}$ is the infinitesimal  gluing map
$\Gamma_T$ (Lemma~\ref{le:d-gamma}) and that $\Gamma_T$ has an
inverse bounded uniformly in $T$ (Lemma~\ref{le:Gamma_T-iso}).
Verification of~(\ref{eq:vgvh-2}) uses Remark~\ref{rem:A}
and~\ref{rem:NP-map-mu} on the linearized Newton-Picard map.
\\
Subsection~\ref{sec:Cm-convergence}
``Evaluation maps and convergence in $C^m$''
shows that in the limit $T\to\infty$ the diagram~(\ref{eq:comm-diag-C0})
commutes as illustrated by Figure~\ref{fig:fig-conv}.

\medskip
The appendices provide abstract results
which might be of general interest. 
\\
Appendix~\ref{sec:quant-InvFT} is on the
``Quantitative inverse function theorem''.
\\
Appendix~\ref{sec:Newton-Picard-map} provides the
``Newton-Picard map without quadratic estimates''.
\\
Appendix~\ref{sec:exponential} ``Exponential decay''
proves such, uniformly in $T$, and for all time derivatives.
We use again the tangent map formalism for ease of induction.
The proof is based on Lemma~\ref{le:exp-dec}
in which the exponential decay rate of $\eta$ is inherited by $\xi$,
as opposed to the original~\cite[Le.\,3.1]{Robbin:2001a}.

\smallskip\noindent
{\bf Acknowledgements.}
UF acknowledges support by DFG grant
FR 2637/2-2.

\boldmath
\section[Pre-gluing]
{Pre-gluing map $\Pp_T$ and its restriction $\wp_T$}
\label{sec:pre-gluing}
\unboldmath
Fix a cut-off function $\beta\colon\R\to[0,1]$, that is a smooth
function such that $\beta(s)=0$ for $s\le -1$ and $\beta(s)=1$ for
$s\ge1$. For any real $T\ge 3$, the \textbf{gluing parameter},
   \todo[color=green!40]{$T\ge 3$}
the \textbf{pre-gluing map} is the
\underline{linear} bounded Hilbert space map defined by
\begin{equation}\label{eq:Pp_T}
\begin{split}
   \Pp_T\colon \Wbb_+\times\Wbb_-
   &\to\Wbb_T
   \\
   (w_+,w_-)
   &\mapsto 
   \underbrace{
   \left(1-\beta(\cdot+2)\right) w_+(T+\cdot)
   +\beta(\cdot-2)\, w_-(-T+\cdot)
   }_{=:w_T}
   .
\end{split}
\end{equation}

\begin{lemma}[Uniform bound]\label{le:unif-bd-Pp_T}
There is a constant $b>0$, depending on the cut-off function
$\beta$ but not on $T$, such that
$\norm{\Pp_T}\le b$ for every $T\ge 3$.
\end{lemma}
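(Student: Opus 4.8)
The plan is to estimate the $W^{1,2}$-norm of $w_T=\Pp_T(w_+,w_-)$ on $[-T,T]$ directly in terms of the norms of $w_+$ on $[0,\infty)$ and $w_-$ on $(-\infty,0]$, exploiting that the two cutoff terms in~(\ref{eq:Pp_T}) have disjoint-ish supports: the function $1-\beta(\cdot+2)$ is supported in $(-\infty,-1]$ and $\beta(\cdot-2)$ is supported in $[1,\infty)$, so on $[-T,T]$ the first term lives on $[-T,-1]$ and the second on $[1,T]$, and they never overlap. Hence $\norm{w_T}_{W^{1,2}[-T,T]}^2 = \norm{(1-\beta(\cdot+2))w_+(T+\cdot)}_{W^{1,2}[-T,-1]}^2 + \norm{\beta(\cdot-2)w_-(-T+\cdot)}_{W^{1,2}[1,T]}^2$, so it suffices to bound each summand separately, and by symmetry I only treat the $w_+$ term.

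First I would substitute $s\mapsto T+s$, which maps $[-T,-1]$ to $[0,T-1]\subset[0,\infty)$ and is an isometry on $W^{1,2}$, to replace $w_+(T+\cdot)$ by $w_+$ on $[0,T-1]$. Then I need to bound $\norm{\chi\, w_+}_{W^{1,2}[0,T-1]}$ where $\chi(s):=1-\beta(s-T+2)$ is a fixed smooth cutoff (a shift of $1-\beta(\cdot+2)$), with $0\le\chi\le 1$ and $|\chi'|$ bounded by $\norm{\beta'}_\infty=:c_\beta$, uniformly in $T$. The Leibniz rule gives $\p_s(\chi w_+)=\chi'w_+ + \chi\,\p_s w_+$, so $\norm{\p_s(\chi w_+)}_{L^2}\le c_\beta\norm{w_+}_{L^2[0,T-1]} + \norm{\p_s w_+}_{L^2[0,T-1]}$, and $\norm{\chi w_+}_{L^2}\le\norm{w_+}_{L^2[0,T-1]}$. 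Combining, $\norm{\chi w_+}_{W^{1,2}[0,T-1]}\le (1+c_\beta)\norm{w_+}_{W^{1,2}[0,\infty)}=(1+c_\beta)\norm{w_+}_{\Wbb_+}$, with the constant independent of $T$. The same bound holds for the $w_-$ term with the same constant. Adding the squares and using $\norm{(w_+,w_-)}_{\Wbb_+\times\Wbb_-}^2=\norm{w_+}_{\Wbb_+}^2+\norm{w_-}_{\Wbb_-}^2$ yields $\norm{\Pp_T(w_+,w_-)}_{\Wbb_T}\le (1+c_\beta)\norm{(w_+,w_-)}_{\Wbb_+\times\Wbb_-}$, so the claim holds with $b:=1+\norm{\beta'}_\infty$.

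There is essentially no hard step here; the only thing to be careful about is the bookkeeping of supports — checking that the two cutoff factors genuinely have disjoint support inside $[-T,T]$ (this needs $T\ge 3$, or in fact just $T\ge 1$, so that the interval $[-1,1]$ on which $w_T$ is forced to be constant is nonempty and the supports $(-\infty,-1]$ and $[1,\infty)$ are disjoint), and that after the time-shift the relevant cutoff is still a fixed bump whose derivative bound does not degenerate as $T\to\infty$. Once that is in place the estimate is just the Leibniz rule plus the triangle inequality, and the constant $b$ depends only on $\beta$ through $\norm{\beta'}_\infty$, as asserted.
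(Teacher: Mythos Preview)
Your proposal is correct and follows essentially the same approach as the paper: the paper's proof is the one-liner ``the shift map is an isometry in $W^{1,2}(\R)$ and the cut-off function $\beta$ is independent of $T$'', and you have simply spelled out the bookkeeping behind that sentence (disjoint supports, change of variables, Leibniz rule). The explicit estimate you derive is in fact the same one the paper writes out later in the proof of Lemma~\ref{le:Uu_pm}, with a slightly different but equivalent constant.
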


\begin{proof}
The shift map is an isometry in $W^{1,2}(\R)$
and the cut-off function $\beta$ is independent of $T$.
\end{proof}

The pre-gluing map has the two properties that, firstly,
for times $s$ on the boundary of $[-T,T]$ we have
\[
   \Pp_T (w_+,w_-)(-T)=w_+(0)
   ,\qquad
   \Pp_T (w_+,w_-)(T)=w_-(0) ,
\]
and, secondly, during the time interval $[-1,1]$ the map rests in the
critical point
\[
   \Pp_T(w_+,w_-)|_{[-1,1]}\equiv 0 .
\]
More precisely, for fixed $w_\pm$, the \textbf{pre-glued path}
$\mbf{w_T}$ is of the form
\begin{equation}\label{eq:w_T}
   \underbrace{\Pp_T(w_+,w_-)(s)}_{=:w_T(s)}=
\begin{cases}
   w_+(T+s)&\text{, $s\in[-T,-3]$}\\
   \left(1-\beta(s+2)\right) w_+(T+s)&\text{, $s\in[-3,-1]$}\\
   0&\text{, $s\in[-1,1]$}\\
   \beta(s-2) w_-(-T+s)&\text{, $s\in[1,3]$}\\
   w_-(-T+s)&\text{, $s\in[3,T]$}
\end{cases}
\end{equation}
for $s\in[-T,T]$.
The pre-glued path $w_T$ for $w_\pm$ is illustrated by
Figure~\ref{fig:fig-z_T}.
\begin{figure}[h]
  \centering
  \includegraphics
                             [height=5.5cm]
                             {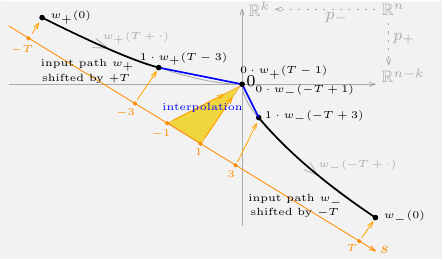}
  \caption{Pre-glued path $w_T(s):=\Pp_T(w_+,w_-)(s)$ for $s\in[-T,T]$}
  \label{fig:fig-z_T}
\end{figure}

\begin{example}[Constant maps to the critical point]
It holds that
\begin{equation}\label{eq:0_T}
   \Pp_T(0_+,0_-)
   =0_T .
\end{equation}
Note that $0_+\in \Ww^{\rm s}$, $0_-\in \Ww^{\rm u}$,
and $0_T\in\Mm_T$.
\end{example}

\boldmath
\subsection*{Restricted pre-gluing map}
\unboldmath

We denote the restriction of the pre-gluing map $\Pp_T$ to
the stable and unstable manifolds $\Ww^{\rm s}\subset\Wbb_+$ and
$\Ww^{\rm u}\subset\Wbb_-$ by
\begin{equation}\label{eq:wp}
   \wp_T:=\Pp_T|_{\Ww^{\rm s}\times \Ww^{\rm u}}
   \colon \Ww^{\rm s}\times \Ww^{\rm u}\to \Wbb_T
\end{equation}
where $T\ge3$.
This map is smooth by linearity of $\Pp_T$.

\medskip
\noindent\boldmath
\textbf{Differential of $\wp_T$ at $(0_+,0_-)$.}
\unboldmath
Consider the tangent spaces to the trajectory spaces
$\Ww^{\rm s}$, $\Ww^{\rm u}$, and $\Mm_T$, at the critical point, namely
\begin{equation}\label{eq:E^x}
\begin{split}
   \E^+:=T_{0_+} \Ww^{\rm s}
   &=\{\xi\in\Wbb_+\mid\p_s\xi+A\xi=0\}
   \subset C^\infty([0,\infty),{\color{red}\R^{n-k}\times \{0\}})
   \\
   \E^-:=T_{0_-} \Ww^{\rm u}
   &=\{\eta\in\Wbb_-\mid\p_s\eta+A\eta=0\}
   \subset C^\infty((-\infty,0], {\color{red} \{0\}\times\R^k})
   \\
   \E_T:=T_{0_T}\Mm_T
   &=\{\zeta\in\Wbb_T\mid\p_s\zeta+A\zeta=0\}
   \subset C^\infty([-T,T],\R^n) .
\end{split}
\end{equation}
By the theorem of Picard-Lindel\"of the dimensions are given by
\[
   \dim \E^+=n-k,\qquad
   \dim \E^-=k,\qquad
   \dim \E_T=n.
\]
Then the linearization of $\wp_T$ at $(0_+,0_-)$ is a map
\[
    d\wp_T(0_+,0_-)\colon
   \E^+\times \E^-\to \Wbb_T .
\]
For $\zeta\in \E^+,\E^-,\E_T$ we abbreviate
\begin{equation}\label{eq:zeta^pm}
   \zeta^+(s):=p_+(\zeta(s)),\quad
   \zeta^-(s):=p_-(\zeta(s)),\qquad
   \text{so $\zeta=(\zeta^+,\zeta^-)$.}
\end{equation}
Since $A$ is diagonal, see~(\ref{eq:A}), and by decay of the elements
of $\Wbb_\pm$, given maps $\xi\in C^\infty([0,\infty),\R^n)$ and 
$\eta\in C^\infty((-\infty,0],\R^n)$, there are the
equivalences
\begin{equation}\label{eq:xi-eta-equiv}
\begin{split}
   \xi\in \E^+\qquad
   &\Leftrightarrow\qquad
   \xi=(\xi^+,0)
   \quad\wedge\quad
   \xi^+(s)=e^{-sA_+}\xi^+(0)\;\forall s\ge 0 ,
   \\
   \eta\in \E^-\qquad
   &\Leftrightarrow\qquad
   \eta=(0,\eta^-)
   \quad\wedge\quad
   \eta^-(s)=e^{+sA_-}\eta^-(0)\;\forall s\le 0 .
\end{split}
\end{equation}

\medskip
\noindent\boldmath
\textbf{Differential of $\wp_T$ at
$(w_+,w_-)\in \Ww^{\rm s}\times \Ww^{\rm u}$.}
\unboldmath
The tangent spaces to the trajectory spaces $\Ww^{\rm s}$,
$\Ww^{\rm u}$, and $\Mm_T$, at points $w_+$, $w_-$, and $w_T$,
   \todo[color=yellow!40]{\small $\p_s$ not $\nabla$! Ableitung im $\R^n$}
are
\begin{equation}\label{eq:TW^s}
\begin{split}
   T_{w_+} \Ww^{\rm s}
   &=\{\xi\in\Wbb_+\mid\p_s\xi+A_{w_+}\xi=0\}
   \subset C^\infty([0,\infty),\R^n)
   \\
   T_{w_-} \Ww^{\rm u}
   &=\{\eta\in\Wbb_-\mid\p_s\eta+A_{w_-}\eta=0\}
   \subset C^\infty((-\infty,0],\R^n)
   \\
   T_{w_T}\Mm_T
   &=\{\zeta\in\Wbb_T\mid\p_s\zeta+A_{w_T}\zeta=0\}
   \subset C^\infty([-T,T],\R^n) .
\end{split}
\end{equation}
Here, for $w\in\{w_+,w_-,w_T\}$, the family of
\textbf{Hessian operators}
\[
   A_w=\{A_w(s)\}_{s}
\]
is defined by the identities
$\left(\Hess_{w(s)}
  f\right)(\cdot,\cdot)=g_{w(s)}\left(\cdot,A_w(s)\cdot\right)$,
one identity for each $s$.
There are canonical, continuous and linear, identifications\footnote{
Observe that the elements
\[
   \xi\in\E^+=T_{0_+}\Ww^{\rm s}\subset T_{0_+}\Wbb_+=\Wbb_+
   ,\qquad
   \theta_{w_+}\xi
   \in T_{w_+}\Ww^{\rm s}\subset T_{w_+}\Wbb_+=\Wbb_+ ,
\]
lie in the same ambient vector space $\Wbb_+$, hence they can be added.
Similarly $\eta\in\E^-$ and $\theta_{w_-}\eta$ both lie in $\Wbb_-$.
}
\begin{equation}\label{eq:theta_+}
   \theta_{w_+}\colon \Wbb_+\supset T_{w_+} \Ww^{\rm s}
   \stackrel{\simeq}{\longrightarrow} \E^+,\qquad
   \theta_{w_-}\colon \Wbb-\supset T_{w_-} \Ww^{\rm u}
   \stackrel{\simeq}{\longrightarrow} \E^-,
\end{equation}
given by asymptotic limits where
$\theta_{0_\pm}=\Id_{\E^\pm}=:\Id_\pm$ and the linear operators
$\theta_{w_\pm}$ depend continuously on $w_\pm$;
   \todo[color=yellow!40]{\small detail this; cf. method 'glattbuegeln' [Weber:2015c]}
see~\cite[\S 3]{Robbin:2001a}.
Since $\wp_T:=\Pp_T|_{\Ww^{\rm s}\times \Ww^{\rm u}}$
is defined by restricting a linear map, the linearization is
the linear map's restriction
\begin{equation}\label{eq:dwp=P_T}
   d\wp_T(w_+,w_-)
   =d\left(\Pp_T|\right)(w_+,w_-)
   =\Pp_T|
   \colon T_{w_+} \Ww^{\rm s}\times T_{w_-} \Ww^{\rm u}\to \Wbb_T .
\end{equation}
Thus, given $(w_+,w_-)\in \Ww^{\rm s}\times \Ww^{\rm u}$, the map defined by
\begin{equation}\label{eq:Theta_T}
\begin{split}
   \Theta_T(w_+,w_-)
   \colon \E^+\times \E^-
   &\to \Wbb_T\\
   (\xi,\eta)
   &\mapsto\Pp_T
   \left(\xi-\theta_{w_+}^{-1}\xi, \eta-\theta_{w_-}^{-1}\eta\right)
\end{split}
\end{equation}
is, by~(\ref{eq:dwp=P_T}), equal to the difference
\begin{equation*}
\begin{split}
   \Theta_T(w_+,w_-)
   &=
   \Pp_T
   \left(
      (\Id_+,\Id_-)
      -(\theta_{w_+}^{-1}, \theta_{w_-}^{-1})
   \right)
\\
   &=
   d\wp_T|_{(0_+,0_-)}
   -d\wp_T|_{(w_+,w_-)}\circ (\theta_{w_+}^{-1}, \theta_{w_-}^{-1}) .
\end{split}
\end{equation*}

\begin{lemma}\label{le:Theta_T}
For any $\eps>0$ there are neighborhoods $\Oo^+_\eps$ of
$0_+$ in $\Ww^{\rm s}$ and
$\Oo^-_\eps$ of $0_-$ in $\Ww^{\rm u}$ 
such that for every $T\ge 3$ and every
$(w_+,w_-)\in \Oo^+_\eps\times \Oo^-_\eps$
the operator norm of
$\Theta_T(w_+,w_-) \colon \E^+\times \E^- \to \Wbb_T$
is less or equal $\eps$.
\end{lemma}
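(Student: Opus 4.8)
The plan is to factor $\Theta_T(w_+,w_-)$ through the uniformly bounded linear pre-gluing map $\Pp_T$, and then to make the remaining factor arbitrarily small in operator norm by shrinking the neighborhoods, exploiting that $w_\pm\mapsto\theta_{w_\pm}$ is continuous and equals $\Id_\pm$ at the critical point.

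First I would rewrite the definition~(\ref{eq:Theta_T}) as a composition of bounded linear maps,
\[
   \Theta_T(w_+,w_-)
   =\Pp_T\circ\Bigl(\bigl(\Id_+-\theta_{w_+}^{-1}\bigr)\oplus\bigl(\Id_--\theta_{w_-}^{-1}\bigr)\Bigr)
   \colon\E^+\times\E^-\longrightarrow\Wbb_+\times\Wbb_-\longrightarrow\Wbb_T ,
\]
where $\Id_+-\theta_{w_+}^{-1}\colon\E^+\to\Wbb_+$ (here $\Id_+$ denotes the inclusion $\E^+\subset\Wbb_+$) and likewise for the minus sign; this makes sense precisely because $\xi-\theta_{w_+}^{-1}\xi$ and $\eta-\theta_{w_-}^{-1}\eta$ already live in the ambient Hilbert spaces $\Wbb_+$ and $\Wbb_-$, as noted in the footnote to~(\ref{eq:theta_+}). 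Next I would invoke Lemma~\ref{le:unif-bd-Pp_T} to fix a constant $b>0$ with $\norm{\Pp_T}\le b$ for all $T\ge 3$, so that
\[
   \norm{\Theta_T(w_+,w_-)}
   \le b\,\max\bigl\{\,\norm{\Id_+-\theta_{w_+}^{-1}}\,,\,\norm{\Id_--\theta_{w_-}^{-1}}\,\bigr\}
\]
up to an innocuous constant depending only on the conventions for the product norms on $\E^+\times\E^-$ and $\Wbb_+\times\Wbb_-$.

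Then, given $\eps>0$, I would use the continuity of $w_+\mapsto\theta_{w_+}$ and $w_-\mapsto\theta_{w_-}$ with $\theta_{0_\pm}=\Id_\pm$ from~\cite[\S 3]{Robbin:2001a}: since $\E^\pm$ are finite dimensional, inversion of these isomorphisms is continuous, so $w_\pm\mapsto\theta_{w_\pm}^{-1}\in\Hom(\E^\pm,\Wbb_\pm)$ is continuous in a neighborhood of $0_\pm$ and takes the value $\theta_{0_\pm}^{-1}=\Id_\pm$ there. Hence I may choose neighborhoods $\Oo^+_\eps\subset\Ww^{\rm s}$ of $0_+$ and $\Oo^-_\eps\subset\Ww^{\rm u}$ of $0_-$ so small that, for all $w_\pm\in\Oo^\pm_\eps$, the right-hand side of the preceding display is at most $\eps$. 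Then $\norm{\Theta_T(w_+,w_-)}\le\eps$, and this bound is uniform in $T\ge 3$ because $b$ is, and because neither $\theta_{w_\pm}$ nor the neighborhoods $\Oo^\pm_\eps$ involve $T$ at all.

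The only point that needs real care — and the one I would write out carefully — is the precise continuity statement behind~\cite[\S 3]{Robbin:2001a}: one needs $w_+\mapsto\theta_{w_+}$ to be continuous as a map into bounded operators with the \emph{fixed} target $\Wbb_+$ (rather than the moving space $T_{w_+}\Ww^{\rm s}$), in the operator-norm topology, with limiting value the inclusion $\Id_+$; granting this, finite dimensionality of $\E^\pm$ makes the passage to inverses and to operator norms routine, and the rest is elementary operator bookkeeping resting on the uniform bound for $\Pp_T$.
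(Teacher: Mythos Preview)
Your proof is correct and follows essentially the same approach as the paper: factor through the uniformly bounded $\Pp_T$ via Lemma~\ref{le:unif-bd-Pp_T}, then use continuous dependence of $\theta_{w_\pm}$ on $w_\pm$ together with $\theta_{0_\pm}=\Id_\pm$. The paper's proof is a one-line sketch citing exactly these ingredients; you have simply spelled out the details and flagged the one subtle continuity point.
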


\begin{proof}
Lemma~\ref{le:unif-bd-Pp_T}
and continuous dependence of $\theta_{w_\pm}$ on $w_\pm$
and $\theta_{0_\pm}=\Id_\pm$.
\end{proof}

\boldmath
\section{Infinitesimal gluing}
\label{sec:infinit-gluing}
\unboldmath

\boldmath
\subsection{Projection associated to a particular complement}
\label{sec:proj-K_T}
\unboldmath

\boldmath
\subsubsection*{Complement $\K_T$ of $n$-dimensional linear solution space $\E_T$}
\unboldmath 

For $T>0$ we choose a Hilbert space complement $\K_T$ of $\E_T$ in
$\Wbb_T$ of the form
\begin{equation}\label{eq:Kk_T}
   \K_T
   :=\{\zeta=(\zeta^+,\zeta^-)\in\Wbb_T\mid
   \text{$\zeta^+ (-T)=0$,\, $\zeta^- (T)=0$}\}.
\end{equation}
Note that the elements $\zeta$ of $\K_T$ start at points $\zeta(-T)$ in the
negative definite space $\{0\}\times\R^k$ and end at points
$\zeta(T)$ in the positive definite space $\R^{n-k}\times\{0\}$.
Roughly speaking, the linear subspace $\K_T$ of $\Wbb_T$
represents impossible paths for a downward gradient flow.
The following lemma tells that $\codim \K_T=n$.

The complement $\K_T$ of $\E_T$ is not necessarily orthogonal.
But it has the useful property that the infinitesimal gluing map
$\Gamma_T$ in~(\ref{eq:Gamma_T-def}) will not depend on the cutoff
function $\beta$ that was used to define the pre-gluing map $\Pp_T$
   \todo[color=yellow!40]{\small $\E_T=\ker D$, $\K_T=\im Q$, $\1-QD=P_{\ker D,\im Q}=P_{\E_T,\K_T}$}
in~(\ref{eq:Pp_T}).

\begin{lemma}[Complement]\label{le:complement}
a) $\K_T\CAP \E_T=\{0\}$ and
b) $\Wbb_T=\K_T+\E_T$.
\end{lemma}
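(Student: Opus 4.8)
The plan is to verify directly that $\K_T$ and $\E_T$ are transverse and span, exploiting the fact that both are described by explicit linear ODE conditions. Recall that $\E_T = \{\zeta \in \Wbb_T \mid \p_s\zeta + A\zeta = 0\}$ consists of the curves $\zeta(s) = e^{-sA}\zeta(0)$, hence is $n$-dimensional and parametrized by the initial value $\zeta(0) \in \R^n$ (equivalently by $\zeta(-T)$ or by any $\zeta(s_0)$). Since $A$ is diagonal with the block form~(\ref{eq:A}), for $\zeta = (\zeta^+,\zeta^-) \in \E_T$ we have $\zeta^+(s) = e^{-sA_+}\zeta^+(-T+\cdot)$-type formulas; concretely $\zeta^+(s) = e^{-(s+T)A_+}\zeta^+(-T)$ and $\zeta^-(s) = e^{-(s-T)(-A_-)}\zeta^-(T) = e^{(s-T)A_-}\zeta^-(T)$.

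For part a), suppose $\zeta \in \K_T \cap \E_T$. From $\zeta \in \E_T$ we have $\zeta^+(s) = e^{-(s+T)A_+}\zeta^+(-T)$, and from $\zeta \in \K_T$ we have $\zeta^+(-T) = 0$; since $A_+$ is invertible the exponential is invertible, so $\zeta^+ \equiv 0$. Symmetrically, $\zeta^-(s) = e^{(s-T)A_-}\zeta^-(T)$ together with $\zeta^-(T) = 0$ gives $\zeta^- \equiv 0$. Hence $\zeta = 0$, which is exactly a).

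For part b), I would argue by a dimension/codimension count once a) is in hand, or — to keep everything explicit — directly construct the decomposition. Given an arbitrary $w = (w^+, w^-) \in \Wbb_T$, define $\zeta \in \E_T$ by prescribing its "boundary data" to match that of $w$ in the relevant components: let $\zeta$ be the unique solution of $\p_s\zeta + A\zeta = 0$ with $\zeta^+(-T) := w^+(-T)$ and $\zeta^-(T) := w^-(T)$. Explicitly $\zeta^+(s) = e^{-(s+T)A_+} w^+(-T)$ and $\zeta^-(s) = e^{(s-T)A_-} w^-(T)$; both lie in $W^{1,2}([-T,T],\R^n)$ since they are smooth on a compact interval. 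Then set $\kappa := w - \zeta$. By construction $\kappa^+(-T) = w^+(-T) - \zeta^+(-T) = 0$ and $\kappa^-(T) = w^-(T) - \zeta^-(T) = 0$, so $\kappa \in \K_T$, and $w = \kappa + \zeta \in \K_T + \E_T$. This proves b), and combined with a) it shows the sum is direct; the codimension statement $\codim\K_T = n$ follows since $\dim\E_T = n$.

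I do not anticipate a genuine obstacle here: the only thing to be slightly careful about is that $\E_T$ is a \emph{closed} subspace (finite-dimensional, hence automatically closed) and that $\K_T$ is closed (it is the kernel of the bounded evaluation-type functionals $\zeta \mapsto \zeta^+(-T)$ and $\zeta \mapsto \zeta^-(T)$, which are continuous on $\Wbb_T$ by the Sobolev embedding $W^{1,2} \hookrightarrow C^0$ on a compact interval), so that "complement" is meant in the topological Hilbert-space sense. The invertibility of $A_+$ and $A_-$, guaranteed by~(\ref{eq:A-diag}) since all $a_\ell \ne 0$, is what makes the argument clean: it is precisely the non-degeneracy of the critical point that forces the intersection to be trivial.
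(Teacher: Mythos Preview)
Your proof is correct and follows essentially the same approach as the paper: both parts use the explicit exponential formulas $\zeta^+(s)=e^{-(s+T)A_+}\zeta^+(-T)$ and $\zeta^-(s)=e^{(s-T)A_-}\zeta^-(T)$ for elements of $\E_T$, deduce a) from uniqueness of ODE solutions with zero initial data, and prove b) by subtracting off the explicit $\E_T$-element matching the relevant boundary values. One small remark: invertibility of $A_\pm$ is not actually needed for a), since $e^{B}$ is invertible for any matrix $B$; the non-degeneracy of the critical point enters elsewhere in the paper (e.g.\ exponential decay), not in this lemma.
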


\begin{proof}
a) Pick $\zeta=(\zeta^+,\zeta^-)\in \E_T$. Then
$A(0,\zeta^-)=(0,-A_-\zeta^-)$ and $A(\zeta^+,0)=(A_+\zeta^+,0)$.
Hence $\p_s\zeta^-=A_-\zeta^-$ and $\p_s\zeta^+=-A_+\zeta^+$,
and therefore
\[
   \zeta^-(s)=e^{(s-T)A_-}\zeta^-(T)
   ,\qquad
   \zeta^+(s)=e^{(-s-T)A_+}\zeta^+(-T) ,
\]
for every $s\in[-T,T]$.
Let $\zeta=(\zeta^+,\zeta^-)\in \K_T\CAP \E_T$.
Then $\zeta^+(-T)=0$ and $\zeta^-(T)=0$,
so $\zeta^+\equiv 0$ and $\zeta^-\equiv 0$ since each one solves a
first order ODE. Hence $\zeta=(\zeta^+,\zeta^-)\equiv 0$.
b) Let $\zeta\in\Wbb_T$. Then the map defined by
$Z(s):=\left(e^{(-s-T)A_+}p_+\zeta(-T),e^{(s-T)A_-}p_-\zeta(T)\right)$
is element of $\E_T$ and the difference
$\zeta-Z$ lies in $\K_T$ since $p_+(\zeta-Z)(-T)=0$
and $p_-(\zeta-Z)(T)=0$.
\end{proof}

\boldmath
\subsubsection*{Uniform Sobolev estimate}
\unboldmath

\begin{lemma}\label{le:sob-emb-T}
Let $T\ge 1$.
   \todo[color=green!40]{$T\ge 1$}
Then any $v\colon[-T,T]\to\R$ of class $W^{1,2}$
   \todo[color=yellow!40]{\small can we get $1$?}
satisfies\footnote{
  In~\cite[(4.57)]{Frauenfelder:2022f}
  we proved the case $T=\infty$ with constant $1$, not $2$.
  }
\begin{equation}\label{eq:sob-emb-T}
   \norm{v}_\infty\le 2 \norm{v}_{1,2}
\end{equation}
where the norms are over the domain $[-T,T]$.
\end{lemma}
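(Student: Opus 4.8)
The plan is to prove the uniform Sobolev estimate $\norm{v}_\infty\le 2\norm{v}_{1,2}$ on $[-T,T]$ for $T\ge 1$ by the classical ``fundamental theorem of calculus plus Cauchy--Schwarz'' argument, being careful to track constants uniformly in $T$. First I would reduce to $v\in C^1$ by density, since both sides are continuous with respect to the $W^{1,2}$ norm and $C^1([-T,T])$ is dense in $W^{1,2}([-T,T])$. Pick $s\in[-T,T]$ where $\abs{v(s)}=\norm{v}_\infty$ is attained (possible by continuity on a compact interval). The key point is that there must exist some point $s_0\in[-T,T]$ with $\abs{v(s_0)}^2\le\frac{1}{2T}\int_{-T}^T \abs{v}^2$, because otherwise $\abs{v}^2>\frac{1}{2T}\int_{-T}^T\abs{v}^2$ everywhere, and integrating over the length-$2T$ interval would give $\int_{-T}^T\abs{v}^2 > \int_{-T}^T\abs{v}^2$, a contradiction.

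Next I would estimate, using the fundamental theorem of calculus on $[s_0,s]$ (or $[s,s_0]$) and Cauchy--Schwarz:
\[
   \abs{v(s)}^2
   \le \abs{v(s_0)}^2 + \Bigl| \int_{s_0}^s \partial_t(\abs{v}^2)\,dt\Bigr|
   = \abs{v(s_0)}^2 + 2\Bigl|\int_{s_0}^s v\,\partial_t v\,dt\Bigr|
   \le \abs{v(s_0)}^2 + 2\norm{v}_{L^2}\norm{\partial_s v}_{L^2}.
\]
Then, with $\abs{v(s_0)}^2\le\frac{1}{2T}\norm{v}_{L^2}^2\le\frac12\norm{v}_{L^2}^2$ (here $T\ge 1$ is used), and $2ab\le a^2+b^2$, one gets
\[
   \norm{v}_\infty^2
   \le \tfrac12\norm{v}_{L^2}^2 + \norm{v}_{L^2}^2 + \norm{\partial_s v}_{L^2}^2
   \le 2\bigl(\norm{v}_{L^2}^2 + \norm{\partial_s v}_{L^2}^2\bigr)
   = 2\norm{v}_{1,2}^2 .
\]
Taking square roots gives $\norm{v}_\infty\le\sqrt{2}\,\norm{v}_{1,2}\le 2\norm{v}_{1,2}$, which is even slightly stronger than claimed.

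The main obstacle, such as it is, is purely bookkeeping: making sure the mean-value point $s_0$ is chosen so that the term $\abs{v(s_0)}^2$ carries a factor that stays bounded as $T\to\infty$ — this is exactly where the hypothesis $T\ge 1$ enters (it bounds $\frac{1}{2T}$ by $\frac12$), and it is why one cannot naively hope to push the constant down to $1$ for finite $T$ with this elementary argument, in contrast to the $T=\infty$ case cited in the footnote where the boundary term disappears. A small alternative, if one prefers to avoid choosing the optimal $s_0$, is to average: integrate the FTC identity $\abs{v(s)}^2 = \abs{v(t)}^2 + 2\int_t^s v\,\partial_r v\,dr$ over $t\in[-T,T]$ and divide by $2T$, which produces $\abs{v(s)}^2\le\frac{1}{2T}\norm{v}_{L^2}^2 + 2\norm{v}_{L^2}\norm{\partial_s v}_{L^2}$ directly; the rest is identical. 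Either way the estimate is uniform in $T\ge1$, as required for later use in Lemma~\ref{le:projection-onto-E} and Corollary~\ref{cor:delta}.
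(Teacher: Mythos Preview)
Your proof is correct and in fact yields the sharper constant $\sqrt{2}$ rather than $2$. It is, however, a genuinely different argument from the one in the paper.

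The paper normalizes to $\norm{v}_{1,2}\le 1$, fixes an arbitrary point $s_0$ with $\kappa:=\abs{v(s_0)}>0$, and shows via the fundamental theorem of calculus and Cauchy--Schwarz that $\abs{v}\ge\kappa/2$ on the subinterval $[s_0-\kappa^2/4,s_0+\kappa^2/4]\cap[-T,T]$. It then argues by contradiction: if $\kappa>2$, that subinterval contains a unit-length interval (here $T\ge1$ is used), and integrating the lower bound $\abs{v}^2\ge\kappa^2/4$ over it violates $\norm{v}_2^2\le1$. Homogeneity then gives the constant $2$.

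Your route is the more classical ``pick a point below the average, then propagate via FTC'' argument, and it is both shorter and sharper: you get $\norm{v}_\infty^2\le\tfrac{1}{2T}\norm{v}_2^2+2\norm{v}_2\norm{v'}_2$, and the hypothesis $T\ge1$ enters only to bound $\tfrac{1}{2T}\le\tfrac12$. The paper's localization argument, by contrast, is closer in spirit to a Morrey-type oscillation estimate and makes the role of $T\ge1$ slightly more geometric (it guarantees a unit interval fits inside $[-T,T]$ on one side of any point). For the purposes of the paper either proof suffices, since only the qualitative statement ``constant independent of $T$'' is used downstream.
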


Estimate~(\ref{eq:sob-emb-T}) continues to hold for vector-valued
maps $v\colon[-T,T]\to\R^\ell$ of class $W^{1,2}$ since
\[
   \norm{(v_1,\dots,v_\ell)}_\infty
   =\max_{i=1,\dots,\ell} \norm{v_i}_\infty
   \le 2 \max_{i=1,\dots,\ell}\norm{v_i}_{1,2}
   \le 2 \norm{v}_{W^{1,2}([-T,T],\R^\ell)} .
\]

\begin{proof}[Proof of Lemma~\ref{le:sob-emb-T}]
The proof has 4 steps.

\medskip \noindent
\textbf{Step 1.}
Let $T>0$.
Suppose $\norm{v}_{1,2}\le 1$
and at $s_0\in[-T,T]$ we have $\kappa:=\abs{v(s_0)}>0$.
Then for $s\in[-T,T]\cap [s_0-\kappa^2/4,s_0+\kappa^2/4]$ it holds
$\abs{v(s)}\ge\kappa/2$.

\medskip\noindent
Pointwise at $s$ we have
\begin{equation*}
\begin{split}
   \abs{v(s)}
   &=\Abs{v(s_0)+\int_{s_0}^s v^\prime(\sigma)\cdot 1\, d\sigma}\\
   &\ge\abs{v(s_0)}-\sqrt{\int_{s_0}^s \left(v^\prime(\sigma)\right)^2\, d\sigma}
   \sqrt{\int_{s_0}^s 1\, d\sigma}\\
   &\ge\kappa- \biggl(\underbrace{\int_{-T}^{T} \left(v^\prime(\sigma)\right)^2\,
       d\sigma}_{\le\norm{v}_{1,2}^2\le 1}\biggr)^{1/2}
   \cdot\sqrt{\abs{s-s_0}}\\
   &\ge\kappa-\sqrt{\abs{s-s_0}}\\
   &\ge\kappa-\sqrt{\kappa^2/4}=\kappa/2 .
\end{split}
\end{equation*}
This proves Step~1.

%
%
%
%

\medskip
\noindent
\textbf{Step 2.}
Under the assumption of Step~1 suppose, in addition, the inclusion
$[s_0,s_0+1]\subset[-T,T]\cap [s_0-\kappa^2/4,s_0+\kappa^2/4]$,
then $\abs{v(s_0)}=2$.

\medskip\noindent
To prove Step~2 use Step~1 to obtain that
\begin{equation*}
\begin{split}
   1
   \ge\norm{v}_{1,2}^2
   \ge\norm{v}_2^2
   \stackrel{\mathrm{inclusion}}{\ge}\int_{s_0}^{s_0+1}
   \underbrace{v(\sigma)^2}_{\stackrel{\rm St.1}{\ge}\kappa^2/4}\, d\sigma
   \ge\kappa^2/4 .
\end{split}
\end{equation*}
Therefore $2\ge\kappa$.
In view of the inclusion this implies $2=\kappa:=\abs{v(s_0)}$.

\medskip
\noindent
\textbf{Step 3.}
Under the assumption of Step~1 suppose, in addition, the inclusion
$[s_0-1,s_0]\subset[-T,T]\cap [s_0-\kappa^2/4,s_0+\kappa^2/4]$,
then $\abs{v(s_0)}=2$.

\medskip\noindent
The same argument as in Step~2 proves Step~3.

\medskip
\noindent
\textbf{Step 4.}
Let $T\ge1$.
If $\norm{v}_{1,2}\le1$, then $\abs{v(s_0)}\le 2$ for every $s_0\in[-T,T]$.

\medskip\noindent
The assumption $T\ge1$ guarantees
$[s_0-1,s_0]\subset[-T,T]$ or $[s_0,s_0+1]\subset[-T,T]$.
We argue by contradiction and assume that $\kappa:=\abs{v(s_0)}>2$.
Then $[s_0-1,s_0]$ or $[s_0,s_0+1]$ is contained in the intersection
$
   [-T,T]\cap[s_0-\kappa^2/4,s_0+\kappa^2/4]
$.
Therefore by Step~2 or Step~3 we have $\kappa=2$. Contradiction.

\medskip
\noindent
By homogeneity of the norm Step~4 implies Lemma~\ref{le:sob-emb-T}.
\end{proof}

\boldmath
\subsubsection*{Projection $\Pi_T$ onto $\E_T$ along $\K_T$}
\unboldmath

We denote the linear \textbf{projection}
in the path space $\Wbb_T:=W^{1,2}([-T,T],\R^n)$
onto the $n$ dimensional subspace $\E_T$ along the
(not necessarily orthogonal) complement $\K_T$ by
\begin{equation}\label{eq:Pi_T}
   P_{\E_T,\K_T}\stackrel{\text{(\ref{eq:zeta_E})}}{=}\Pi_T
   \colon\Wbb_T=\E_T\oplus \K_T\to \E_T.
\end{equation}

\begin{lemma}\label{le:projection-onto-E}
The projection $P_{\E_T,\K_T}$ is given by the map
$\Pi_T\colon\zeta\mapsto\zeta_{\E}$ where
\begin{equation}\label{eq:zeta_E}
\begin{split}
   \zeta_\E(s)
   :=\left(e^{-(s+T)A_+}\zeta^+(-T),
   e^{(s-T)A_-}\zeta^-(T)\right)
\end{split}
\end{equation}
for $s\in[-T,T]$.There is
   \todo[color=green!40]{$T\ge 1$}
a constant $d=d(a_1,a_n)$,
depending on the eigenvalues $a_1$ and $a_n$ of $A$
   \todo{\small $a_1$ does NOT generalize to dim $\infty$}
in~(\ref{eq:A-diag}), but independent of $T\ge 1$, such that
$
  \norm{\Pi_T}\le d
$.
\end{lemma}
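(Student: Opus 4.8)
The plan is to establish the formula and the norm bound separately; the formula is essentially a bookkeeping remark, the bound a short explicit computation. For~(\ref{eq:zeta_E}): the assignment $\zeta\mapsto\zeta_\E$ it prescribes is, after the harmless rewriting $e^{(-s-T)A_+}=e^{-(s+T)A_+}$ together with $p_+\zeta(-T)=\zeta^+(-T)$ and $p_-\zeta(T)=\zeta^-(T)$, literally the map $\zeta\mapsto Z$ produced in the proof of Lemma~\ref{le:complement}~(b). That proof already records the two facts I need: $\zeta_\E\in\E_T$ (it solves $\p_s\zeta_\E+A\zeta_\E=0$, checked coordinatewise using $A=\diag(A_+,-A_-)$, and it is smooth on the compact interval hence lies in $\Wbb_T$) and $\zeta-\zeta_\E\in\K_T$ (its $+$-part vanishes at $-T$ and its $-$-part at $T$). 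Since $\zeta\mapsto\zeta_\E$ is plainly linear and $\Wbb_T=\E_T\oplus\K_T$ by Lemma~\ref{le:complement}, these properties force it to coincide with the projection $P_{\E_T,\K_T}=\Pi_T$; so the first assertion comes for free.

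For the bound on $\norm{\Pi_T}$ I would diagonalize. As $A_+$ and $A_-$ are diagonal,~(\ref{eq:zeta_E}) decouples: writing $\zeta^+(-T)=(v_1,\dots,v_{n-k})$ and $\zeta^-(T)=(\tilde v_1,\dots,\tilde v_k)$ in the eigenbasis, the scalar pieces of $\zeta_\E$ are $s\mapsto e^{-(s+T)a_j}v_j$ on the positive block and $s\mapsto e^{-(T-s)\abs{a_{n-k+i}}}\tilde v_i$ on the negative block, for $s\in[-T,T]$. Each such piece and its $s$-derivative is an explicit exponential, and the substitution $t=s+T$ (resp.\ $t=T-s$) turns the relevant $L^2$-integrals into $\int_0^{2T}e^{-2a_jt}\,dt\le\tfrac1{2a_j}$ and $\int_0^{2T}a_j^2e^{-2a_jt}\,dt\le\tfrac{a_j}2$, and likewise with $a_j$ replaced by $\abs{a_{n-k+i}}$. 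Since every exponent lies in $[\sigma,\max(a_1,\abs{a_n})]$ with $\sigma$ the spectral gap~(\ref{eq:spec-gap}), summing over the coordinates yields $\norm{\zeta_\E}_{1,2}^2\le C\bigl(\abs{\zeta^+(-T)}^2+\abs{\zeta^-(T)}^2\bigr)$ for a constant $C$ depending only on $\sigma$ and $\max(a_1,\abs{a_n})$ — crucially \emph{not} on $T$, precisely because these exponentials are integrable over $[0,\infty)$.

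The last step dominates the two boundary values by $\norm{\zeta}_{1,2}$, and this is the only point where the hypothesis $T\ge1$ enters: the vector-valued uniform Sobolev estimate of Lemma~\ref{le:sob-emb-T} gives $\abs{\zeta^+(-T)}\le\norm{\zeta^+}_\infty\le\norm{\zeta}_\infty\le2\norm{\zeta}_{1,2}$ and likewise $\abs{\zeta^-(T)}\le2\norm{\zeta}_{1,2}$, all norms over $[-T,T]$ and with $T$-independent constant. Chaining the two estimates yields $\norm{\Pi_T\zeta}_{1,2}\le d\,\norm{\zeta}_{1,2}$ with $d=2\sqrt{2C}$ depending only on $a_1$ and $a_n$ (through $\sigma$ and $\max(a_1,\abs{a_n})$) and independent of $T\ge1$, as claimed. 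I do not expect a genuine obstacle here: the only thing to get right is routing the $T$-uniformity through Lemma~\ref{le:sob-emb-T} and through the convergent exponential integrals, rather than through the useless bound $\norm{\zeta_\E}_{L^2}\le\sqrt{2T}\,\norm{\zeta_\E}_\infty$.
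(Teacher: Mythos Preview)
Your proposal is correct and follows essentially the same approach as the paper: verify that $\zeta\mapsto\zeta_\E$ splits $\zeta$ into its $\E_T$ and $\K_T$ parts (you do this by invoking Lemma~\ref{le:complement}(b), the paper by checking the three projection identities directly), then diagonalize to bound $\norm{\zeta_\E}_{1,2}^2$ by a $T$-independent constant times $\abs{\zeta^+(-T)}^2+\abs{\zeta^-(T)}^2$, and finally control those boundary values via the uniform Sobolev estimate of Lemma~\ref{le:sob-emb-T}.
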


\begin{proof}
Let $\zeta\in\Wbb_T$.
The map $\zeta\mapsto\zeta_\E$ is linear. Moreover
$(\zeta_\E)_\E(s)=\zeta_\E(s)$ since
$\zeta_\E^+(-T)=\zeta^+(-T)$ and $\zeta_\E^-(T)=\zeta^-(T)$.
This proves identity 1 in the following
\[
   \Pi_T\circ\Pi_T\stackrel{{\color{gray}1}}{=}\Pi_T ,\qquad
   \im\Pi_T\stackrel{{\color{gray}2}}{=}\E_T ,\qquad
   \ker\Pi_T\stackrel{{\color{gray}3}}{=}\K_T.
\]
Identity 2:
One readily checks that $\p_s\zeta_\E=-A\zeta_\E$, therefore
$\zeta_\E\in\E_T$. Hence $\im\Pi_T\subset\E_T$.
Vice versa, given $\zeta\in\E_T$, the two components
$\zeta^\pm:=p_\pm\zeta$
satisfy for $s\in[-T,T]$, and using~(\ref{eq:A-diag}), the ODE
$\p_s\zeta^+=-A_+\zeta^+$ with initial value $\zeta^+(-T)$ at $s=-T$
and the ODE
$\p_s\zeta^-=A_-\zeta^-$ with initial value $\zeta^-(T)$ at $s=T$.
The solutions are given by $s\mapsto e^{-(s+T)A_+}\zeta^+(-T)$ and
by $s\mapsto e^{(s-T)A_-}\zeta^-(T)$, respectively.
Their direct sum is $\zeta_\E$, hence $\E_T\subset\im \Pi_T$.
\smallskip
\\
Identity 3:
Pointwise in $s$ vanishing of the vector valued map
$(\Pi_T\zeta)(s)=\left(
e^{-(s+T)A_+}\zeta^+(-T),e^{(s-T)A_-}\zeta^-(T)\right)=(0,0)$
happens iff both components vanish,
that is iff $\zeta\in\K_T$.

\smallskip
To find a bound for $P$, pick $\zeta\in\Wbb_T$.
Straightforward calculation shows that
\begin{equation*}
\begin{split}
   \norm{\zeta_\E}_{\Wbb_T}^2
   &\stackrel{{\color{white}\text{(3.11)}}}{=}
   \norm{\zeta_\E}_{\V_T}^2
   +\norm{\tfrac{d}{ds}\zeta_\E}_{\V_T}^2
\\
   &\stackrel{(\ref{eq:zeta_E})}{=}
   \int_{-T}^T
   \Abs{\left(e^{-(s+T)A_+}\zeta^+(-T),e^{(s-T)A_-}\zeta^-(T)\right)}^2\, ds
   \\
   &\qquad +\int_{-T}^T
   \Abs{\left(-A_+ e^{-(s+T)A_+}\zeta^+(-T),A_- e^{(s-T)A_-}\zeta^-(T)\right)}^2\, ds
\\
   &\stackrel{(\ref{eq:p})}{=}
   \int_0^{2T}
   \Abs{e^{-sA_+}\xi^+(-T)}^2 +\Abs{A_+ e^{-sA_+}\xi^+(-T)}^2\, ds
   \\
   &\qquad +\int_{-2T}^0 \Abs{e^{sA_-}\zeta^-(T)}^2
   +\Abs{A_- e^{sA_-}\zeta^-(T)}^2
   \, ds
\\
   &\stackrel{(\ref{eq:A})}{=}
   \sum_{i=1}^{n-k}\int_0^{2T}
   (1+a_i^2) e^{-2s a_i}\zeta_i(-T)^2 \, ds
   \\
   &\qquad +\sum_{j=n-k+1}^n\int_{-2T}^0
   (1+a_j^2) e^{-2s a_j}\zeta_j(T)^2 \, ds
\\
   &\stackrel{{\color{white}\text{(3.1)}}}{=}
   \sum_{i=1}^{n-k}(1+a_i^2) \zeta_i(-T)^2\tfrac{1-e^{-4Ta_i}}{2a_i}
   +\sum_{j=n-k+1}^n (1+a_j^2) \zeta_j(T)^2 \tfrac{1-e^{4Ta_j}}{-2a_j}
\\
   &\stackrel{(\ref{eq:A-diag})}{\le}
   \tfrac{1+a_1^2}{2\sigma}\sum_{i=1}^{n-k}\zeta_i(-T)^2
   +\tfrac{1+a_n^2}{2\sigma}\sum_{j=n-k+1}^n\zeta_j(T)^2
\\
   &\stackrel{{\color{white}\text{(3.1)}}}{\le}
   \tfrac{d^2}{8}
   \left(\abs{p_+\zeta(-T)}^2+\abs{p_-\zeta(T)}^2\right)
   \quad {\color{gray} , 
   \tfrac{d^2}{8}:= \tfrac{\max\{{\color{red}1+a_1^2},1+a_n^2\}}{2\sigma}}
\\
\end{split}
\end{equation*}
\begin{equation*}
\begin{split}
   &\stackrel{{\color{white}\text{(3.1)}}}{\le}
   \tfrac{d^2}{4}  \norm{\zeta}_{L^\infty([-T,T])}^2
\\
   &\stackrel{(\ref{eq:sob-emb-T})}{\le}
   d^2 \norm{\zeta}_{\Wbb_T}^2 .
\end{split}
\end{equation*}
Here equality two is by definition~(\ref{eq:zeta_E}) of $\zeta_\E$.
Equality three is by the $g_0$-orthogonal splitting~(\ref{eq:p})
which makes the mixed inner products zero.
Equality four uses that $A=\diag(A_+,-A_-)$, by~(\ref{eq:A}),
and the $a_i>0>a_j$ are ordered by~(\ref{eq:A-diag}).
Equality five is by integration.
   \todo[color=yellow!40]{\small can't do (\ref{eq:W-norm-solution}) -- $\zeta$ not ODE solution}
The first inequality uses the order~(\ref{eq:A-diag}) of the matrix
entries $a_\ell$ and definition~(\ref{eq:spec-gap}) of the spectral
gap $\sigma$. The third inequality uses that the projections $p_\pm$
are orthogonal, hence of norm $\le 1$.
The final inequality four is by the, uniform in $T$, Sobolev
estimate~(\ref{eq:sob-emb-T}).
This concludes the proof of Lemma~\ref{le:projection-onto-E}.
\end{proof}

\boldmath
\subsection{Infinitesimal gluing map $\Gamma_T$}
\label{sec:Gamma_T}
\unboldmath

\begin{definition}
For $T\ge 3$ we call the linear map defined by the composition
\begin{equation}\label{eq:Gamma_T-def}
   \Gamma_T:=\Pi_T\circ d\wp_T(0_+,0_-)
   \colon \E^+\times \E^-\to\Wbb_T\to \E_T
\end{equation}
the \textbf{infinitesimal gluing map}.
It acts as shown in~(\ref{eq:Gamma_T}) and Figure~\ref{fig:fig-Gamma_T}.
\end{definition}

To obtain a formula for $\Gamma_T$ we proceed in three steps
I--III. Fix elements $\xi=(\xi^+,0)\in \E^+$ and $\eta=(0,\eta^-)\in \E^-$;
see~(\ref{eq:xi-eta-equiv}).

I. Time $s=-T$: By definition~(\ref{eq:wp} of $\wp_T$
and~(\ref{eq:Pp_T}) of the pre-gluing map $\Pp_T$
(using that $1-\beta(-T+2)={\color{cyan}1}$ and
$\beta(-T-2)={\color{brown}0}$ for $T\ge 3$) we obtain
\begin{equation*}
\begin{split}
   d\wp_T(0_+,0_-)(\xi,\eta)(-T)
   &
   \stackrel{(\ref{eq:wp})}{=}
   \left(
   \left.\tfrac{d}{d\tau}\right|_{0}
   \Pp_T(\eps\xi,\eps\eta)
   \right)
   (-T)
\\
   &
   \stackrel{(\ref{eq:Pp_T})}{=}
   {\color{cyan}1}\cdot\xi(T-T)+{\color{brown}\,0}\cdot \eta(-T-T)
   =(\xi^+(0),0).
\end{split}
\end{equation*}
In view of the direct sum $\Wbb_T=\K_T\oplus\E_T$
and since $\Gamma_T(\xi,\eta):=P_{\E_T,\K_T}\circ
d\wp_T(0_+,0_-)(\xi,\eta)$ is the projection to $\E_T$
there is an element $\zeta$ in the projection kernel $\K_T$ such that
$d\wp_T(0_+,0_-)(\xi,\eta)=\zeta+\Gamma_T(\xi,\eta)$.
So we get identity {\color{gray} $1$} in
\begin{equation}\label{eq:Gamma-xi}
   p_+\Bigl(\Gamma_T(\xi,\eta)(-T)\Bigr)
   \stackrel{{\color{gray} 1}}{=}
    p_+\Bigl(d\wp_T(0_+,0_-)(\xi,\eta)(-T)-\zeta(-T)\Bigr)
   \stackrel{{\color{gray} 2}}{=}
   \xi^+(0) .
\end{equation}
Identity {\color{gray} $2$} holds as $p_+(\zeta(-T))=0$
by condition one in definition~(\ref{eq:Kk_T}) of $\K_T$.

II. Time $s=T$: Similarly as in I. we obtain that
\[
   d\wp_T(0_+,0_-)(\xi,\eta)(T)
   \stackrel{(\ref{eq:Pp_T})}{=}\eta(0)
   =(0,\eta^-(0)).
\]
Now use condition two in definition~(\ref{eq:Kk_T}) of $\K_T$ to
conclude that
\begin{equation}\label{eq:Gamma-eta}
   p_-\left(\Gamma_T(\xi,\eta)(T)\right)=\eta^-(0) .
\end{equation}

III. Time $s\in[-T,T]$:
Since $\Gamma_T(\xi,\eta)$ lies in $\E_T$ it satisfies the ODE given by
$\p_s\Gamma_T(\xi,\eta)+A\Gamma_T(\xi,\eta)=0$ and so,
by~(\ref{eq:Gamma-xi}) and~(\ref{eq:Gamma-eta}), we get the formula
\begin{equation}\label{eq:Gamma_T}
   \Gamma_T(\xi,\eta)(s)
   =\left( e^{-(s+T)A_+}\xi^+(0),e^{(s-T)A_-}\eta^-(0)\right) .
\end{equation}
In particular, due to the choice of the complement $\K_T$
which just involves the ends $-T$ and $T$,
the infinitesimal gluing map \textbf{does not depend}
on the choice of the cutoff function $\beta$ used to define the
pre-gluing map~(\ref{eq:Pp_T}).
\begin{figure} 
  \centering
  \includegraphics
                             {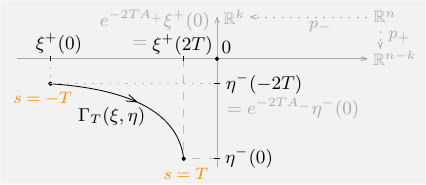}
  \caption{Infinitesimal gluing isomorphism
                 $\Gamma_T\in\Ll(\E^+\times\E^-,\E_T)$\small; cf.~(\ref{eq:Gamma_T})}
  \label{fig:fig-Gamma_T}
\end{figure}
\begin{lemma}[Norm]\label{le:Gamma_T-iso}
Let $T\ge 3$.
The linear map $\Gamma_T\colon \E^+\times \E^-\to \E_T$ is an
isomorphism of norm $\norm{\Gamma_T}\le1$
and the norm of the inverse is bounded, uniformly in $T$,
   \todo[color=yellow!40]{\small used in {Thm.~\ref{thm:gluing-diffeo}}}
by $k:=1/(1-e^{-12\sigma})$ where $\sigma$ is the spectral gap~(\ref{eq:spec-gap}) of $A$.
\end{lemma}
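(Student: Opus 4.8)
The plan is to exploit the explicit formula~(\ref{eq:Gamma_T}) together with the orthogonal splitting~(\ref{eq:p}). First I would observe that, because $A$ is the block-diagonal matrix in~(\ref{eq:A}) with $A_\pm$ positive definite, and because the two blocks live in $g_0$-orthogonal subspaces, the $W^{1,2}$-norm of $\Gamma_T(\xi,\eta)$ splits as a sum of a positive-definite-block contribution and a negative-definite-block contribution, with no cross terms. Concretely, after the substitution $s\mapsto s+T$ in the first block and $s\mapsto s-T$ in the second, one gets
\[
   \norm{\Gamma_T(\xi,\eta)}_{\Wbb_T}^2
   =\int_0^{2T}\Abs{e^{-sA_+}\xi^+(0)}^2+\Abs{A_+e^{-sA_+}\xi^+(0)}^2\,ds
   +\int_{-2T}^0\Abs{e^{sA_-}\eta^-(0)}^2+\Abs{A_-e^{sA_-}\eta^-(0)}^2\,ds,
\]
exactly as in the computation inside the proof of Lemma~\ref{le:projection-onto-E}. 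This shows $\Gamma_T$ is injective: if the right side vanishes then $\xi^+(0)=0$ and $\eta^-(0)=0$, hence $\xi=0$ and $\eta=0$ by~(\ref{eq:xi-eta-equiv}). Since $\dim(\E^+\times\E^-)=n=\dim\E_T$, injectivity already gives that $\Gamma_T$ is an isomorphism; alternatively one reads surjectivity directly off~(\ref{eq:Gamma_T}) by prescribing endpoint data.

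Next I would prove the norm bound $\norm{\Gamma_T}\le1$. Here I would compare $\Gamma_T(\xi,\eta)$ with the pair $(\xi,\eta)$ itself: since $\xi\in\E^+$ means $\xi^+(s)=e^{-sA_+}\xi^+(0)$ for all $s\ge0$, the full half-line integral $\int_0^\infty\Abs{e^{-sA_+}\xi^+(0)}^2+\Abs{A_+e^{-sA_+}\xi^+(0)}^2\,ds$ equals $\norm{\xi}_{\Wbb_+}^2$, and the corresponding integral over $[0,2T]$ is no larger; similarly for $\eta$. Hence $\norm{\Gamma_T(\xi,\eta)}_{\Wbb_T}^2\le\norm{\xi}_{\Wbb_+}^2+\norm{\eta}_{\Wbb_-}^2=\norm{(\xi,\eta)}^2$, which is exactly $\norm{\Gamma_T}\le1$.

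For the bound on the inverse, I would turn the displayed integral identity into a lower bound for $\norm{\Gamma_T(\xi,\eta)}_{\Wbb_T}^2$ in terms of $|\xi^+(0)|^2+|\eta^-(0)|^2$, and then control $\norm{(\xi,\eta)}^2$ from above by the same quantity. Diagonalizing $A_+=\diag(a_1,\dots,a_{n-k})$ and $A_-=\diag(-a_{n-k+1},\dots,-a_n)$, each scalar mode $\xi_i^+(0)$ contributes $(1+a_i^2)\tfrac{1-e^{-4Ta_i}}{2a_i}\,\xi_i(0)^2$ to $\norm{\Gamma_T(\xi,\eta)}_{\Wbb_T}^2$ and $(1+a_i^2)\tfrac{1}{2a_i}\,\xi_i(0)^2$ to $\norm{\xi}_{\Wbb_+}^2$; the ratio of the two is $1-e^{-4Ta_i}\ge 1-e^{-12\sigma}$ using $T\ge3$ and $a_i\ge\sigma$ (and symmetrically for the $\eta$ modes with $|a_j|\ge\sigma$). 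Summing over modes — this is where the clean mode-by-mode structure from the orthogonal splitting is essential, since it lets one pass from the per-mode inequality to the aggregate one — gives $\norm{\Gamma_T(\xi,\eta)}_{\Wbb_T}^2\ge(1-e^{-12\sigma})\,\bigl(\norm{\xi}_{\Wbb_+}^2+\norm{\eta}_{\Wbb_-}^2\bigr)$, hence $\norm{\Gamma_T^{-1}}\le(1-e^{-12\sigma})^{-1/2}\le k$. The only mildly delicate point is bookkeeping the uniform-in-$T$ constants and making sure the per-mode ratio is bounded below independently of which sign block the mode belongs to; everything else is the routine Gaussian-type integral $\int_0^{2T}e^{-2sa}\,ds=\tfrac{1-e^{-4Ta}}{2a}$ already carried out in Lemma~\ref{le:projection-onto-E}.
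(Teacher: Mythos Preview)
Your proposal is correct and follows essentially the same route as the paper: explicit mode-by-mode computation of $\norm{\Gamma_T(\xi,\eta)}_{\Wbb_T}^2$ and of $\norm{\xi}_{\Wbb_+}^2+\norm{\eta}_{\Wbb_-}^2$ via~(\ref{eq:Gamma_T}) and~(\ref{eq:xi-eta-equiv}), then comparison of the per-mode factors $\tfrac{1-e^{-4Ta_i}}{2a_i}$ against $\tfrac{1}{2a_i}$ to obtain both the upper bound $\norm{\Gamma_T}\le 1$ and the lower bound $\norm{\Gamma_T(\xi,\eta)}^2\ge(1-e^{-12\sigma})\norm{(\xi,\eta)}^2$. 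Your observation that this actually yields the sharper bound $\norm{\Gamma_T^{-1}}\le(1-e^{-12\sigma})^{-1/2}$ is correct; the paper's computation gives the same inequality on squared norms but records only the weaker consequence $\norm{\Gamma_T^{-1}}\le k$.
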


\begin{proof}
We saw that $\dim \E^+=n-k$, $\dim \E^-=k$, and $\dim \E_T=n$.
Hence it suffices to show injectivity, i.e. that the kernel of
$\Gamma_T$ is trivial.
Given $(\xi,\eta)\in\ker \Gamma_T$, then by~(\ref{eq:Gamma_T})
we have $\xi^+(0)=0$ and $\eta^-(0)=0$. So $\xi^+\equiv 0$ and
$\eta^-\equiv 0$,
since $\xi^+$ and $\eta^-$ are solutions of a linear first order ODE;
see~(\ref{eq:xi-eta-equiv}). Therefore $\xi=(\xi^+,0)\equiv 0$ and
$\eta=(0,\eta^-)\equiv 0$.
This shows that $\Gamma_T$ is an isomorphism whenever $T\ge 3$.

To see that $\Gamma_T$ and ${\Gamma_T}^{-1}$ are bounded, 
uniformly in $T$, consider the identities
\begin{equation*}
\begin{split}
   \norm{\Gamma_T(\xi,\eta)}_{\Wbb_T}^2
   &\stackrel{{\color{white}\text{(3.11)}}}{=}
   \norm{\Gamma_T(\xi,\eta)}_{\V_T}^2
   +\norm{\tfrac{d}{ds}\Gamma_T(\xi,\eta)}_{\V_T}^2
\\
   &\stackrel{(\ref{eq:Gamma_T})}{=}
   \int_{-T}^T
   \Abs{\Bigl(e^{-(s+T)A_+}\xi^+(0),e^{(s-T)A_-}\eta^-(0)\Bigr)}^2\, ds
   \\
   &\qquad +\int_{-T}^T
   \Abs{\Bigl(-A_+ e^{(-s-T)A_+}\xi^+(0),A_- e^{(s-T)A_-}\eta^-(0)\Bigr)}^2\, ds
\\
   &\stackrel{(\ref{eq:p})}{=}
   \int_0^{2T}
   \Abs{e^{-sA_+}\xi^+(0)}^2 +\Abs{A_+ e^{-sA_+}\xi^+(0)}^2\, ds
   \\
   &\qquad +\int_{-2T}^0 \Abs{e^{sA_-}\eta^-(0)}^2
   +\Abs{A_- e^{sA_-}\eta^-(0)}^2
   \, ds
\\
   &\stackrel{(\ref{eq:A})}{=}
   \sum_{i=1}^{n-k}\int_0^{2T}
   (1+a_i^2) e^{-2s a_i}\xi_i(0)^2 \, ds
   \\
   &\quad +\sum_{j=n-k+1}^n\int_{-2T}^0
   (1+a_j^2) e^{-2s a_j}\eta_j(0)^2 \, ds
\\
   &\stackrel{{\color{white}\text{(3.11)}}}{=}
   \sum_{i=1}^{n-k}(1+a_i^2) \xi_i(0)^2\tfrac{1-e^{-4Ta_i}}{2a_i}
   +\sum_{j=n-k+1}^n (1+a_j^2) \eta_j(0)^2 \tfrac{1-e^{4Ta_j}}{-2a_j}
\end{split}
\end{equation*}
where equality two is by formula~(\ref{eq:Gamma_T}) for $\Gamma_T$.
Equality three is by the $g_0$-orthogonal splitting~(\ref{eq:p}).
Equality four uses that $A=\diag(A_+,-A_-)$, by~(\ref{eq:A}),
and the $a_i>0>a_j$ are ordered by~(\ref{eq:A-diag}).\
Equality five is by integration.

The $\Wbb_+$ norm of $\xi=(\xi^+,0)\in \E^+$,
see~(\ref{eq:xi-eta-equiv}), is given by
\begin{equation}\label{eq:W-norm-solution}
\begin{split}
   \norm{\xi}_{\Wbb_+}^2
   &\stackrel{{\color{white}\text{(3.11)}}}{=}
   \norm{\xi}_{\V_+}^2+\norm{\tfrac{d}{ds}\xi}_{\V_+}^2
\\
   &\stackrel{\text{(\ref{eq:xi-eta-equiv})}}{=}
   \int_0^\infty\abs{e^{-sA_+}\xi^+(0)}^2\, ds
   +\int_0^\infty\abs{A_+e^{-sA_+}\xi^+(0)}^2\, ds
\\
   &\stackrel{\text{(\ref{eq:A-diag})}}{=}
   \sum_{i=1}^{n-k}\int_0^\infty (1+a_i^2)e^{-2sa_i}\xi_i(0)^2\, ds
\\
   &\stackrel{{\color{white}\text{(3.11)}}}{=}
   \sum_{i=1}^{n-k}\tfrac{1+a_i^2}{2a_i} \xi_i(0)^2
\end{split}
\end{equation}
and analogously for the $\Wbb_-$ norm of $\eta=(0,\eta^-)\in \E^-$.

To see that ${ \Gamma_T}^{-1}$ is bounded, uniformly in $T$, we estimate
$\Gamma_T$ from below
\begin{equation*}
\begin{split}
   \norm{\Gamma_T(\xi,\eta)}_{\Wbb_T}^2
   &=\sum_{i=1}^{n-k}(1+a_i^2) \xi_i(0)^2\tfrac{1-e^{-4Ta_i}}{2a_i}
   +\sum_{j=n-k+1}^n (1+a_j^2) \eta_j(0)^2 \tfrac{1-e^{4Ta_j}}{-2a_j}
   \\
   &\ge\left(1-e^{-12\sigma}\right)\left(
   \sum_{i=1}^{n-k}\tfrac{1+a_i^2}{2a_i} \xi_i(0)^2
   +\sum_{i=n-k+1}^{n}\tfrac{1+a_j^2}{-2a_j} \eta_j(0)^2\right)
   \\
   &= \left(1-e^{-12\sigma}\right) \left(\norm{\xi}_{\Wbb_+}^2+\norm{\eta}_{\Wbb_-}^2\right).
\end{split}
\end{equation*}
To obtain the inequality we use the assumption $T\ge3$ and the spectral
gap $\sigma$ of $A$ defined by~(\ref{eq:spec-gap}) of $A$.
The last equality is explained right above.

To see that $\Gamma_T$ is bounded, uniformly in $T$, we estimate
$\Gamma_T$ from above
\begin{equation*}
\begin{split}
   \norm{\Gamma_T(\xi,\eta)}_{\Wbb_T}^2
   &=\sum_{i=1}^{n-k}(1+a_i^2) \xi_i(0)^2\tfrac{1-e^{-4Ta_i}}{2a_i}
   +\sum_{j=n-k+1}^n (1+a_j^2) \eta_j(0)^2 \tfrac{1-e^{4Ta_j}}{-2a_j}
   \\
   &\le \sum_{i=1}^{n-k}\tfrac{1+a_i^2}{2a_i} \xi_i(0)^2
   +\sum_{j=n-k+1}^n \tfrac{1+a_j^2}{-2a_j} \eta_j(0)^2
   \\
   &=\norm{\xi}_{\Wbb_+}^2+\norm{\eta}_{\Wbb_-}^2
\end{split}
\end{equation*}
where the inequality holds since $1-e^{-4Ta_i}\le1$ and
$1-e^{4Ta_j}\le1$.
\end{proof}

\boldmath
\section{Newton-Picard map}
\label{sec:NP-map-appl}
\unboldmath

Given two elements $w_+\in \Ww^{\rm s}$ and $w_-\in \Ww^{\rm u}$
near the critical point, we view the pre-glued path
\[
   w_T:=\wp_T(w_+,w_-) \in \Wbb_T
\]
as an approximate flow trajectory, equivalently an approximate
zero~$x_1$ of the section~$\Ff_T$, and then detect a nearby solution
using the implicit function theorem with initial point $x_0:=0_T$, see
Appendix~\ref{sec:Newton-Picard-map}.
Thus we need that $\Ff_T(w_T)$ is suitably close to zero.
We also need a uniformly in $T$ bounded right inverse $Q_T$
of the linearization $D_T:=d\Ff_T(0_T)$.
These are the next two subsections.

\boldmath
\subsection{Approximate zero}
\label{sec:approx-zero}
\unboldmath

\begin{proposition}[Pre-glued path is approximate zero]
\label{prop:approx-zero}
Pick $\eps\in(0,\sigma)$ where $\sigma=\sigma(A)$ is the spectral
gap~(\ref{eq:spec-gap}).
Let $m\in\N_0$ and choose compact neighborhoods $K_+=K_+(m)$ of $0_+\in
\Ww^{\rm s}$ in $T^m \Ww^{\rm s}$ and $K_-=K_-(m)$ of
$0_-\in \Ww^{\rm u}$ in $T^m \Ww^{\rm u}$.
Then the following is true. 
Given two elements
\[
   W_+\in K_+\subset T^m \Ww^{\rm s}
   ,\qquad
   W_- \in K_-\subset T^m \Ww^{\rm u} ,
\]
pre-glue them with the $m$-fold tangent map of $\wp_T$, see~(\ref{eq:wp}), to get
\[
   W_T:=T^m\wp_T(W_+,W_-) .
\]
Then there is a constant $C(K_+,K_-)$, independent of $T$, such~that
\begin{equation}\label{eq:T^mFf-decay}
\begin{split}
   \Norm{T^m\Ff_T(W_T)}_{T^m\V_T}
   \le C(K_+,K_-)\cdot e^{-\eps T}
\end{split}
\end{equation}
whenever $T\ge 3$.
\end{proposition}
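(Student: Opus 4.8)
The plan is to use the explicit five-interval description~(\ref{eq:w_T}) of the pre-glued path. On the three intervals $[-T,-3]$, $[-1,1]$, $[3,T]$ the error $\Ff_T(w_T)$ vanishes identically, and on the two short interpolation intervals $[-3,-1]$ and $[1,3]$ it is controlled pointwise by the size of $w_+$, resp.~$w_-$, near the far ends of their domains, which decays exponentially in $T$ uniformly over the compact sets $K_\pm$ by the exponential decay estimate of Appendix~\ref{sec:exponential}.

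First I would treat the case $m=0$. On $[-T,-3]$ we have $w_T(s)=w_+(T+s)$ by~(\ref{eq:w_T}); since $w_+\in\Ww^{\rm s}={\Ff_+}^{-1}(0_+)$ satisfies $\p_r w_+(r)+\Nabla{}f(w_+(r))=0$ for all $r\ge0$, setting $r=T+s$ gives $\Ff_T(w_T)|_{[-T,-3]}=0$, and symmetrically $\Ff_T(w_T)|_{[3,T]}=0$. On $[-1,1]$ we have $w_T\equiv0$ and, as $0$ is a critical point of $f$, $\Ff_T(w_T)|_{[-1,1]}=\p_s 0+\Nabla{}f(0)=0$. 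It remains to estimate the $L^2$ norm over $[-3,-1]\cup[1,3]$. On $[-3,-1]$, abbreviating $u(s):=w_+(T+s)$ and $\beta_-(s):=\beta(s+2)$ (note $\beta(s-2)=0$ there), the flow equation $\dot u=-\Nabla{}f(u)$ yields
\begin{equation*}
\begin{split}
   \Ff_T(w_T)(s)
   &=\p_s\bigl((1-\beta_-)u\bigr)+\Nabla{}f\bigl((1-\beta_-)u\bigr)\\
   &=-\beta_-^\prime\, u-(1-\beta_-)\Nabla{}f(u)+\Nabla{}f\bigl((1-\beta_-)u\bigr).
\end{split}
\end{equation*}
Since $\Nabla{}f$ is smooth with $\Nabla{}f(0)=0$, there is a constant $c_1$ with $\abs{\Nabla{}f(v)}\le c_1\abs{v}$ for all $v$ in a fixed neighborhood of $0$; hence, for $T$ large enough that $w_+(T+s)$ stays in that neighborhood for every $w_+\in K_+$, we obtain the pointwise bound $\abs{\Ff_T(w_T)(s)}\le C_1\abs{w_+(T+s)}$ with $C_1:=\norm{\beta^\prime}_\infty+2c_1$. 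By Appendix~\ref{sec:exponential} there is a constant $C_2=C_2(K_+)$, independent of $T$, with $\abs{w_+(r)}\le C_2 e^{-\eps r}$ for all $r\ge0$ and all $w_+\in K_+$; since $T+s\ge T-3$ on $[-3,-1]$, this gives $\abs{\Ff_T(w_T)(s)}\le C_1 C_2 e^{3\eps}e^{-\eps T}$, so the $L^2$ norm over $[-3,-1]$ is at most $\sqrt{2}\,C_1 C_2 e^{3\eps}e^{-\eps T}$. The interval $[1,3]$ is handled identically with $w_-$ in place of $w_+$, and adding the two contributions proves~(\ref{eq:T^mFf-decay}) for $m=0$.

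For general $m$ I would run the same argument in the iterated tangent bundle. A point of $T^m\Wbb_T$ is a tuple of $W^{1,2}$ paths $[-T,T]\to\R^n$; the map $T^m\wp_T$ acts componentwise through the bounded linear operator $\Pp_T$ of~(\ref{eq:Pp_T}), and $T^m\Ff_T$ is a section whose components are assembled from $\Ff_T$ and its differentials $d^j\Ff_T$ ($j\le m$) evaluated at the underlying path and its variation directions. Membership $W_\pm\in T^m\Ww^{\rm s/u}$ means precisely that the base path lies in $\Ww^{\rm s/u}$ and its successive variation fields solve the corresponding linearized gradient equations; these properties are preserved under time shift, so $T^m\Ff_T(W_T)$ vanishes on $[-T,-3]\cup[3,T]$. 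On $[-1,1]$ every slot of $W_T$ vanishes by~(\ref{eq:w_T}) (applied componentwise), so each component of $T^m\Ff_T(W_T)$ vanishes there as well, the zeroth because $\Ff_T(0_T)=0$ and the higher ones because they are multilinear in the vanishing variation slots. On each interpolation interval the Leibniz computation above, differentiated $j\le m$ times and with the linearized equations $\dot v=-d(\Nabla{}f)(u)v$ used to eliminate $s$-derivatives, bounds $\abs{T^m\Ff_T(W_T)(s)}$ pointwise by a constant times the values, at the shifted argument $T+s\ge T-3$, of $w_+$ (resp.~$w_-$) and finitely many of its variation fields; by the all-orders version of the exponential decay estimate in Appendix~\ref{sec:exponential} these are $\le$ const$\cdot e^{-\eps T}$ uniformly over $K_\pm$, and collecting constants gives~(\ref{eq:T^mFf-decay}).

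The main obstacle I expect is not the $m=0$ computation but the bookkeeping of the $m$-fold tangent case: making precise that the components of $T^m\Ff_T$ vanish on the three good intervals (because $W_T$ and all its variation fields solve the relevant linearized equations there, or are identically zero) and that on the interpolation intervals each such component is controlled by finitely many of the variation fields of $w_\pm$ near the far ends of their domains. Once one grants this tangent-map bookkeeping and the uniform, all-orders exponential decay provided by Appendix~\ref{sec:exponential}, the estimate follows exactly as in the scalar case.
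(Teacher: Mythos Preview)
Your $m=0$ argument is correct and matches the paper's computation essentially line by line: the five-interval decomposition, vanishing on $[-T,-3]\cup[-1,1]\cup[3,T]$, the pointwise bound on the interpolation intervals via $\abs{\Nabla{}f(v)}\le c_1\abs{v}$ near the origin, and the uniform exponential decay from Appendix~\ref{sec:exponential}.

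Where you diverge from the paper is in the passage to general $m$. You propose to unpack $T^m\Ff_T$ into components built from $\Ff_T$ and its higher differentials $d^j\Ff_T$, then track each component separately through the five intervals. This would work, but it is exactly the ``tangent-map bookkeeping'' you flag as the main obstacle. The paper sidesteps it entirely by the observation
\[
   T^m\Ff_T(W_T)=\p_s W_T+T^m\Nabla{}f(W_T),
\]
see~(\ref{eq:TFf_T}). Since $\Pp_T$ is linear, $T^m\Pp_T$ is given by the very same five-interval formula~(\ref{eq:w_T}) with $w_\pm$ replaced by $W_\pm$; since $T^m\Nabla{}f\colon T^m\R^n\to T^m\R^n$ is smooth and vanishes at the origin (because $\Nabla{}f(0)=0$), there is a single constant $\mu_m$ with $\abs{T^m\Nabla{}f(W)}\le\mu_m\abs{W}$ for $\abs{W}\le c$. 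With these two facts in hand, the entire $m=0$ computation runs verbatim with $\Nabla{}f$ replaced by $T^m\Nabla{}f$ and $w_\pm$ by $W_\pm$; the exponential decay of $W_\pm$ (not just of the base path but of the whole tuple) is supplied directly by Theorem~\ref{thm:lin-unif-exp-decay}. So there is no induction on $m$ and no componentwise analysis---the structural identity~(\ref{eq:TFf_T}) reduces every $m$ to the case you already did.
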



\begin{proof}
Let $m\in\N_0$. An
   \todo[color=yellow!40]{\tiny $m=0$, $1$ App.~\ref{app:TM}}
element $W_+\in T^m \Ww^{\rm s}$ is a map satisfying the equation
\begin{equation}\label{eq:TW^s-2}
   W_+\colon[0,\infty)\to T^m\R^n,
   \qquad
   \p_sW_++T^m\Nabla{}f(W_+)=0 .
\end{equation}
An element $W_-\in T^m \Ww^{\rm s}$ is a map that satisfies the equation
\begin{equation}\label{eq:TW^u-2}
   W_-\colon(-\infty,0]\to T^m\R^n,
   \qquad
   \p_sW_-+T^m\Nabla{}f(W_-)=0 .
\end{equation}
Since $0$ is a non-degenerate critical point of $f$ the solutions $W_+$
and $W_-$ decay with all their derivatives exponentially.
   \todo[color=yellow!40]{\small use:\\ *uniform* exp. decay}
In particular, by Theorem~\ref{thm:lin-unif-exp-decay} and compactness
of $K_\pm$, there is a constant $c=c(K_+,K_-)$ with
\begin{equation}\label{eq2:exp-dec-W}
   \abs{W_+(s)}+\abs{\p_s W_+(s)}\le c e^{-\eps s}
   ,\quad
   \abs{W_-(s)}+\abs{\p_s W_-(s)}\le c e^{\eps s} ,
\end{equation}
for every $s\ge0$, respectively $s\le0$.
Since $f$ has a critical point at the origin,
we get $T^m\Nabla{}f(0)=0$ where $0\in T^m\R^n$.\footnote{
  It holds $T^1\Nabla{}f(0,0)=\left(\Nabla{}f(0),D\Nabla{}f(0) 0\right)=(0,0)$,
  similarly $T^m\Nabla{}f(0,\dots,0)=(0,\dots,0)$.
}
Hence there is 
   \todo[color=yellow!40]{\small by Taylor}
a constant $\mu_m>0$ with
\begin{equation}\label{eq:mu_m}
   \Abs{T^m\Nabla{}f(W)}\le \mu_m\abs{W}
\end{equation}
whenever $\abs{W}\le c$.

\smallskip
By linearity of the pre-gluing map $\Pp_T$,
the $m-$fold tangent map is given by
\begin{equation*}
\begin{split}
   W_T:=T^m\Pp_T(W_+,W_-)
   &\stackrel{\text{(\ref{eq:Pp_T})}}{=}
   \left(1-\beta(s+2)\right) W_+(T+s)
   +\beta(s-2)\, W_-(-T+s) 
\end{split}
\end{equation*}
pointwise at $s\in[-T,T]$. Similarly,
   \todo[color=yellow!40]{\small linearity}
in analogy to~(\ref{eq:w_T}), we have
\begin{equation}\label{eq:zeta_TT2}
   W_T(s) 
   =
\begin{cases}
   W_+(T+s)&\text{, $s\in[-T,-3]$}\\
   \left(1-\beta(s+2)\right) W_+(T+s)&\text{, $s\in[-3,-1]$}\\
   0&\text{, $s\in[-1,1]$}\\
   \beta(s-2) W_-(-T+s)&\text{, $s\in[1,3]$}\\
   W_-(-T+s)&\text{, $s\in[3,T]$}
\end{cases}
\end{equation}
for $s\in[-T,T]$.
The tangent map of $\Ff_T(w):=\p_sw+\Nabla{}f(w)$ at $W_T$ is given by
\begin{equation}\label{eq:TFf_T}
   T^m\Ff_T(W_T)
   =\p_s W_T+T^m\Nabla{}f(W_T) .
\end{equation}
\noindent
Now there are three cases.
\newline
1)~For $s\in [-T, -3]\cup[-1,1]\cup[3,T]$ the map $T^m\Ff_T(W_T)$
vanishes:
\begin{equation*}
\begin{split}
   T^m\Ff_T(W_T)(s)
   \stackrel{\text{(\ref{eq:zeta_TT2})}}{=}
   \p_sW_+(T+s)+T^m\Nabla{}f|_{W_T(T+s)}
   &\stackrel{\text{(\ref{eq:TW^s-2})}}{=} 0 ,\quad \forall s\in[-T,-3].
\\
   T^m\Ff_T(W_T)(s)
   \stackrel{\text{(\ref{eq:zeta_TT2})}}{=}
   \p_sW_-(-T+s)+T^m\Nabla{}f|_{W_-(-T+s)}
   &\stackrel{\text{(\ref{eq:TW^u-2})}}{=} 0 ,\quad \forall s\in[3,T].
\\
   T^m\Ff_T(W_T)(s)
   =0\quad\text{since}\quad W_T(s)
   &\stackrel{\text{(\ref{eq:zeta_TT2})}}{=} 0 ,\quad \forall s\in[-1,1].
\end{split}
\end{equation*}
2) For $s\in[-3,-1]$, using~(\ref{eq:TFf_T}) for $T^m\Ff_T(W_T)$
and~(\ref{eq:zeta_TT2}) for $W_T$, we obtain
\begin{equation*}
\begin{split}
   T^m\Ff_T(W_T)(s)
   &= 
   -\beta^\prime(s+2) W_+(T+s)
   +(1-\beta(s+2))\p_s W_+(T+s)
\\
   &\qquad
   +T^m \Nabla{}f|_{(1-\beta(s+2)) W_+(T+s)} .
\end{split}
\end{equation*}
Now, by~(\ref{eq:mu_m}) and~(\ref{eq:TW^s-2}), we estimate the
pointwise length by
\begin{equation*}
\begin{split}
   &\Abs{T^m\Ff_T(W_T)(s)}
\\
   &\le
   \norm{\beta^\prime}_\infty\abs{W_+(T+s)}
   +\abs{\p_s W_+(T+s)}
   +\Abs{T^m\nabla f|_{(1-\beta(s+2))W_+(T+s)}}
\\
   &\le\left(\norm{\beta^\prime}_\infty+1+\mu_m\right)
   c\cdot  e^{-\eps(s+T)} .
\end{split}
\end{equation*}
3) For $s\in[1,3]$ we obtain analogously the formula
\begin{equation*}
\begin{split}
   T^m\Ff_T(W_T)(s)
   &=
   -\beta^\prime(s-2)W_-(-T+s)
   +\beta(s-2)\p_sW_-(-T+s)
   \\
   &\qquad
   +T^m\Nabla{} f|_{\beta(s-2)W_-(-T+s)}
\end{split}
\end{equation*}
and the estimate $\Abs{T^m\Ff_T(W_T)(s)}
\le\left(\norm{\beta^\prime}_\infty+1+\mu_m\right) c e^{\eps(-T+s)}$.

Thus for the $L^2$ norm we get, by integration, the estimate
\begin{equation*}
\begin{split}
   \Norm{T^m\Ff_T (W_T)}_{T^m\V_T}^2
   \le \underbrace{2\left(\norm{\beta^\prime}_\infty+1+\mu_m\right)^2 
   c^2 \frac{e^{6\eps}-e^{2\eps}}{\eps}}_{=:C(K_+,K_-)^2}
   e^{-2\eps T} 
\end{split}
\end{equation*}
where $c=c(K_+,K_-)$ and $K_\pm$ depend on $m$. This proves
Proposition~\ref{prop:approx-zero}.
\end{proof}

\boldmath
\subsection{Surjectivity and right inverse}
\label{sec:surjectivity}
\unboldmath

\textsc{Surjective linearization at $0_T$.}
Let $T>0$.
The linearization at general $w\in\Wbb_T$ is given by
\begin{equation}\label{eq:dFf}
   d\Ff_T(w)
   \colon\Wbb_T \to \V_T  
   ,\quad
   \zeta\mapsto\p_s\zeta+A_w\zeta 
   ,\qquad A_w(s):=\nabla\Nabla{}f(w(s)) ,
\end{equation}
where $\nabla\Nabla{}f(w(s))$ is the Jacobian of the vector field
$\Nabla{}f\colon\R^n\to\R^n$ at $w(s)$.
The linearization at the origin $0_T$, namely the operator
\begin{equation}\label{eq:D_T}
   D_T:=d\Ff_T(0_T)
   \colon\Wbb_T=\E_T\oplus \K_T\to\V_T
   ,\quad
   \zeta\mapsto\p_s\zeta+A\zeta ,
\end{equation}
where $A_{0_T}=A=\diag(A_+,-A_-)$ is the diagonal block matrix~(\ref{eq:A}),
is surjective: Given $\eta\in\V_T$, then the element defined by
\begin{equation}\label{eq:var-const}
   \zeta(s)
   :=e^{-sA}\left(\zeta_0+\int_0^s e^{\sigma A}\eta(\sigma)\, d\sigma\right)
   ,\quad
   \zeta_0\in\R^n,
\end{equation}
for $s\in[-T,T]$,
lies in $\Wbb_T$ and
satisfies $d\Ff_T(0_T)\zeta=\eta$.\footnote{
  Since the interval $[-T,T]$ is finite, the Morse condition is not
  needed here.
  }
For $\Mm_T:=\Ff_T^{-1}(0)$ there is the natural inclusion
$\E_T:=T_{0_T}\Mm_T\subset\ker d\Ff_T(0_T)$.
On the other hand, both spaces are determined by the initial
conditions which are given by $\R^n$, thus
$\dim \E_T=n=\dim\ker D_T$. Therefore the two spaces coincide
\begin{equation}\label{eq:D_T=E_T}
   \ker D_T=\E_T .
\end{equation}
Since $\K_T$ is a complement of $\E_T$,
the restriction
\begin{equation}\label{eq:F_T}
{\color{gray}
   \begin{pmatrix}F_T^+&0\\0&F_T^-\end{pmatrix}
   =
}\,
   F_T
   :=d\Ff_T(0_T)|_{\K_T}
{\color{gray}\,
   =
   \begin{pmatrix}\frac{d}{ds}+A_+&0\\0&\frac{d}{ds}-A_-\end{pmatrix}
   \biggr|_{\K_T}
}
   \colon \K_T\to \V_T
\end{equation}
is injective, hence a continuous linear bijection.
Hence, by the open mapping theorem, the inverse
\begin{equation}\label{eq:Q_T}
{\color{gray}
   \begin{pmatrix}Q_T^+&0\\0&Q_T^-\end{pmatrix}
   =
}\,
   Q_T
   := {F_T}^{-1}
{\color{gray}\,
   =\begin{pmatrix}(F_T^+)^{-1}&0\\0&(F_T^-)^{-1}\end{pmatrix}
}
   \colon\V_T\to \K_T
\end{equation}
is also continuous.
Thus the map $F_T\colon \K_T\to \V_T$ is a
Hilbert space isomorphism.

\medskip
\noindent
\textsc{Right inverse $Q_T$ of $D_T$.}
\begin{remark}[$Q_T$ is a right inverse of $D_T$]
Given $\eta\in\V_T$, then $\zeta:=Q_T\eta\in \K_T$, hence
$d\Ff_T(0_T)\zeta=F_T\zeta$. Therefore
\[
   d\Ff_T(0_T)\circ Q_T \eta
   =F_T\circ Q_T \eta
   =\eta .
\]
\end{remark}

\begin{lemma}\label{le:Q_T}
There is a constant $c$, independent of $T>0$,
such that $\norm{Q_T}\le c$.
\end{lemma}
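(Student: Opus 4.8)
The plan is to bound $Q_T = F_T^{-1}$ by establishing a uniform lower bound for $F_T$ on the complement $\K_T$, i.e.\ an inequality $\norm{F_T\zeta}_{\V_T}\ge \frac1c\norm{\zeta}_{\Wbb_T}$ for all $\zeta\in\K_T$, with $c$ independent of $T$. Since $F_T$ splits as a block-diagonal operator $\mathrm{diag}(F_T^+,F_T^-)$ with $F_T^\pm = \frac{d}{ds}\pm A_\pm$ acting on the two components, and since $\K_T$ is precisely the subspace $\{\zeta^+(-T)=0,\ \zeta^-(T)=0\}$, it suffices to treat each block separately and then combine. Concretely, I would first prove: if $\zeta^+\colon[-T,T]\to\R^{n-k}$ is $W^{1,2}$ with $\zeta^+(-T)=0$, then $\norm{\partial_s\zeta^+ + A_+\zeta^+}_{L^2}\ge \frac1{c_+}\norm{\zeta^+}_{W^{1,2}}$, and symmetrically for the $-$ component with the endpoint condition at $T$; here $c_+$ depends only on $a_1$ (and $a_{n-k}$, hence on the spectral gap), not on $T$.

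For the first block estimate, I would use the variation-of-constants representation: from $\partial_s\zeta^+ + A_+\zeta^+ = \eta^+$ with $\zeta^+(-T)=0$ one gets $\zeta^+(s) = \int_{-T}^s e^{-(s-\tau)A_+}\eta^+(\tau)\,d\tau$, which makes sense because $A_+$ is \emph{positive} definite, so the integrating factor decays as we move forward. This is exactly the setting of a stable-type estimate. I would control $\norm{\zeta^+}_{L^2}$ by a Young/convolution inequality, using that $\int_0^\infty e^{-\sigma a_i}\,d\sigma = 1/a_i \le 1/\sigma$, and then bound $\norm{\partial_s\zeta^+}_{L^2}\le \norm{\eta^+}_{L^2}+\norm{A_+\zeta^+}_{L^2}\le(1+a_1\cdot\tfrac1\sigma)\norm{\eta^+}_{L^2}$ or the like, so that $\norm{\zeta^+}_{W^{1,2}}\le c_+\norm{\eta^+}_{L^2}$ with $c_+ = c_+(a_1,\sigma)$ uniform in $T$. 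The symmetric $-$ block uses the backward representation $\zeta^-(s) = -\int_s^T e^{(s-\tau)A_-}\eta^-(\tau)\,d\tau$ anchored at $s=T$, with the same type of estimate. (An alternative to convolution: diagonalize, reduce to scalar ODEs $\dot u + a u = g$, $u(-T)=0$, compute $\int_{-T}^T u^2$ explicitly and bound it by $\tfrac1{a^2}\int g^2$, much like the explicit integrations already carried out in the proof of Lemma~\ref{le:projection-onto-E} and Lemma~\ref{le:Gamma_T-iso}.)

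Finally I would assemble: for $\zeta=(\zeta^+,\zeta^-)\in\K_T$ with $F_T\zeta=\eta=(\eta^+,\eta^-)$, the block-diagonal structure gives $\norm{\zeta}_{\Wbb_T}^2 = \norm{\zeta^+}_{W^{1,2}}^2+\norm{\zeta^-}_{W^{1,2}}^2 \le (c_+^2+c_-^2)(\norm{\eta^+}_{L^2}^2+\norm{\eta^-}_{L^2}^2) = c^2\norm{\eta}_{\V_T}^2$, so $\norm{Q_T\eta}_{\Wbb_T}\le c\norm{\eta}_{\V_T}$ with $c=c(a_1,a_n)$ independent of $T$. The main obstacle is making the $L^2\to L^2$ bound on $\zeta^+$ genuinely uniform in $T$: one must exploit that the integrating factor in the stable/unstable representations decays, so the operator norm of the convolution kernel is controlled by $1/\sigma$ regardless of the interval length; if one naively used Gr\"onwall or the wrong endpoint anchoring (matching the hyperbolic directions at the \emph{wrong} ends), the constant would blow up with $T$. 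Everything else is bookkeeping already modeled by the explicit computations earlier in Section~\ref{sec:infinit-gluing}.
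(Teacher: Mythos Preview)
Your proposal is correct and follows essentially the same approach as the paper: block-diagonal splitting into the $+$ and $-$ components with the appropriate endpoint anchoring, the variation-of-constants representation, a Young/convolution estimate using that $\int_0^\infty e^{-\sigma a_i}\,d\sigma\le 1/\sigma$ is independent of $T$, then bounding $\partial_s\zeta$ via $\eta-A\zeta$, and finally assembling the two blocks by orthogonality. The paper organizes this as three cases ($A$ positive definite, $A$ negative definite, general) rather than as a single block argument, but the content is the same.
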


\begin{proof}
In the proof we distinguish three cases.

\smallskip
\noindent
\textsc{I. $A$ positive definite:} In this case $k=0$ and $p_-=0$,
see~(\ref{eq:p}), in particular $a_1\ge \dots \ge a_n>0$.
Thus $\K_T=\{\xi\in\Wbb_T\mid \xi(-T)=0\}$;
see~(\ref{eq:Kk_T}).
Given $\eta\in\V_T$, let $\zeta:=Q_T\eta$,
equivalently $\eta=F_T\zeta$.
Since $\zeta\in \K_T$, we know that $\zeta(-T)=0$.
By~(\ref{eq:var-const}), where we changed the start of the integration
from $0$ to $-T$, we get the formula
\begin{equation*}
\begin{split}
   \zeta(s)
   &=e^{-sA}
   \left(\zeta(-T)+\int_{-T}^s e^{\sigma A}\eta(\sigma)\,
     d\sigma\right)
   \stackrel{\zeta(-T)=0}{=}
   \int_{-T}^s e^{-(s-\sigma)A}\eta(\sigma)\, d\sigma\\
   &=\int_{[-T,s]\cup{\color{red}(s,T}]}\phi(s-\sigma)\eta(\sigma)\, d\sigma
   =(\phi * \eta)(s)
\end{split}
\end{equation*}
whenever $s\in[-T,T]$ and where the function $\phi$ is defined by
$\phi(r):=e^{-rA}$ for $r\ge 0$, and by {\color{red} $0$ for $r<0$}.
Hence, by Young's inequality, we have
\begin{equation*}
   \norm{\zeta}_2\le\norm{\phi}_1\norm{\eta}_2
   \le \frac{1}{a_n}\norm{\eta}_2
\end{equation*}
where the $L^2$ and $L^1$ norms are over $[-T,T]$
and since
\begin{equation*}
\begin{split}
   \norm{\phi}_1=\int_{-T}^T\norm{\phi(s)}_{\Ll(\R^n)}\, ds
   &=\int_0^T\norm{e^{-sA}}_{\Ll(\R^n)}\, ds\\
   &=\int_0^T e^{-sa_n}\, ds
   =\tfrac{1-e^{-a_n T}}{a_n}
   \le\tfrac{1}{a_n}.
\end{split}
\end{equation*}
Note that $a_n>0$ is the smallest eigenvalue of the positive definite
operator $A$.
Since $\p_s\zeta=\eta-A\zeta$, and by the triangle inequality and
$ \norm{\zeta}_2\le \frac{1}{a_n}\norm{\eta}_2$,
   \todo[color=yellow!40]
      {\tiny $\norm{A\zeta}_2\le\norm{\phi}_1\norm{A\eta}_2$
      \\\mbox{ } \\
      $\norm{A\eta}_2^2\le
      a_1^2\norm{\eta_1}_2^2+\dots+a_n^2\norm{\eta_n}_2^2$
      $\le a_1^2\norm{\eta}_2^2$}
we get
\begin{equation*}
\begin{split}
   \norm{Q_T\eta}_{1,2}^2
   =\norm{\zeta}_{1,2}^2
   &=\norm{\p_s\zeta}_{2}^2+\norm{\zeta}_{2}^2\\
   &=\norm{\eta-A\zeta}_{2}^2+\norm{\zeta}_{2}^2\\
   &\le(\norm{\eta}_{2}+\norm{A\zeta}_{2})^2+\norm{\zeta}_{2}^2\\
   &\le\left(1+\tfrac{a_1}{a_n}\right)^2 \norm{\eta}_{2}^2
   +\tfrac{1}{a_n^2} \norm{\eta}_{2}^2 .
\end{split}
\end{equation*}
This proves Step~1 for
   \todo{\small $a_1$ does NOT generalize to dim $\infty$}
$c^2=\frac{(a_1+a_n)^2+1}{a_n^2}$.

\smallskip
\noindent
\textsc{II. $A$ negative definite:} So $k=n$ and $p_-=\1$
and $\K_T=\{\xi\in\Wbb_T\mid \xi(T)=0\}$.
Given $\eta\in\V_T$, let $\zeta:=Q_T\eta$,
equivalently $\eta=F_T\zeta$.
Since $\zeta\in \K_T$, we know that $\zeta(T)=0$.
By~(\ref{eq:var-const}), where we changed the start of the integration
from $0$ to $T$, we obtain the formula
\begin{equation*}
\begin{split}
   \zeta(s)
   &=e^{s A}
   \left(\zeta(T)+\int_{T}^s e^{-\sigma A}\eta(\sigma)\,
     d\sigma\right)
   \stackrel{\zeta(T)=0}{=}
   -\int_{s}^T e^{(s-\sigma)A}\eta(\sigma)\, d\sigma\\
   &=-\int_{{\color{red}[-T,s)}\cup[s,T]} \phi(s-\sigma)\eta(\sigma)\, d\sigma
   =-(\phi * \eta)(s)
\end{split}
\end{equation*}
whenever $s\in[-T,T]$ and where $\phi$ was defined
in Step~1. Continue as in Step~1.

\smallskip
\noindent
\\
\textsc{III. General case:}
Given $\eta=(\eta^+,\eta^-)\in\V_T$.
Let $\zeta:=Q_T\eta$, then
\[
   \zeta
   =(\zeta^+,\zeta^-)
   ,\qquad \zeta^+=Q_T^+\eta^+
   ,\quad \zeta^-=Q_T^-\eta^- .
\]
From Step~I and Step~II there exists a constant $c>0$ such that
$\norm{\zeta^+}_{1,2}\le c\norm{\eta^+}_2$ and
$\norm{\zeta^-}_{1,2}\le c\norm{\eta^-}_2$.
Since the splitting $\R^{n-k}\times\R^k$ is orthogonal, we have
\[
   \norm{\zeta}_{1,2}^2
   \stackrel{\perp}{=}\norm{\zeta^+}_{1,2}^2+\norm{\zeta^-}_{1,2}^2
   \le c^2 \norm{\eta^+}_{2}^2+c^2 \norm{\eta^-}_{2}^2
   \stackrel{\perp}{=} c^2\norm{\eta}_2^2.
\]
This proves Step~III and Lemma~\ref{le:Q_T}.
\end{proof}

\boldmath
\subsection{Definition of $\Nn_T$}
\label{sec:def-NP}
\unboldmath

Let $c$ be the right inverse bound from Lemma~\ref{le:Q_T}.
In order to use later on Remark~\ref{rem:NP-map-mu}
to satisfy hypothesis~(\ref{eq:A.3.4-NEW-modified}), 
as opposed to only~(\ref{eq:A.3.4-NEW}),
we define, for $\mu\ge 2$, a nested family
   \todo[color=yellow!40]{\small HERE $\mu$ APPEARS}
of open neighborhoods of $0$ in $\R^n$
as the pre-image of $[0,1/\mu c)$ under the continuous
map $\norm{d\Nabla{} f(\cdot)-A}\colon\R^n\to[0,\infty)$, in symbols
\[
   B^\mu
   :=\Norm{d\Nabla{} f(\cdot)-A}^{-1}[0,\tfrac{1}{\mu c}),
   \quad B^\mu\subset B^2 .
\]
For $T>0$ define an open neighborhood $\Bb^\mu_T$ of $0_T$ in
$\Wbb_T$
   \todo[color=green!40]{\small $\Bb^\mu_T$ defined}
by
$$
   \Bb^\mu_T:=\{w\in\Wbb_T\mid w(s)\in B^\mu\; \forall s\in[-T,T]\}
   \subset\Bb^2_T.
$$

\begin{lemma}\label{le:bd-diff-lin}
Let $T>0$ and $\mu\ge 2$. If $w\in \Bb^\mu_T$, then
$\norm{d\Ff_T(w)-D_T}\le\frac{1}{\mu c}$.
\end{lemma}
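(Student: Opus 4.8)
The plan is to reduce the claimed operator-norm bound to a uniform pointwise estimate on the Hessian operators along~$w$, and then integrate. First I would identify the operator $d\Ff_T(w)-D_T$. By~(\ref{eq:dFf}) and~(\ref{eq:D_T}), both $d\Ff_T(w)$ and $D_T=d\Ff_T(0_T)$ send $\zeta\in\Wbb_T$ to $\partial_s\zeta$ plus a zeroth-order term, namely $A_w(\cdot)\zeta(\cdot)$ with $A_w(s)=d\Nabla{}f(w(s))$ (the Jacobian of $\Nabla{}f$ at $w(s)$, see~(\ref{eq:dFf})) in the first case, and $A\zeta(\cdot)$ with $A=A_{0_T}=d\Nabla{}f(0)$ in the second. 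Hence the difference is the zeroth-order multiplication operator
\begin{equation*}
   \bigl(d\Ff_T(w)-D_T\bigr)\zeta
   =\bigl(A_w(\cdot)-A\bigr)\zeta(\cdot).
\end{equation*}
Since $w\in\Wbb_T$ is continuous and $d\Nabla{}f$ is continuous, $s\mapsto A_w(s)$ is continuous, hence bounded on the compact interval $[-T,T]$, so this does define a bounded operator $\Wbb_T\to\V_T$.

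Next I would feed in the definition of $\Bb^\mu_T$. By definition $B^\mu=\Norm{d\Nabla{}f(\cdot)-A}^{-1}[0,\tfrac{1}{\mu c})$, so every $z\in B^\mu$ satisfies $\Norm{d\Nabla{}f(z)-A}_{\Ll(\R^n)}\le\tfrac{1}{\mu c}$. Since $w\in\Bb^\mu_T$ means $w(s)\in B^\mu$ for all $s\in[-T,T]$, this yields the uniform pointwise bound $\Norm{A_w(s)-A}_{\Ll(\R^n)}\le\tfrac{1}{\mu c}$ for every $s\in[-T,T]$. Then for any $\zeta\in\Wbb_T$,
\begin{equation*}
   \Norm{\bigl(d\Ff_T(w)-D_T\bigr)\zeta}_{\V_T}^2
   =\int_{-T}^T\Abs{\bigl(A_w(s)-A\bigr)\zeta(s)}^2\,ds
   \le\frac{1}{(\mu c)^2}\int_{-T}^T\Abs{\zeta(s)}^2\,ds
   \le\frac{1}{(\mu c)^2}\Norm{\zeta}_{\Wbb_T}^2,
\end{equation*}
using $\Norm{\zeta}_{\V_T}\le\Norm{\zeta}_{\Wbb_T}$ in the last step. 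Taking square roots and the supremum over $\Norm{\zeta}_{\Wbb_T}\le1$ gives $\Norm{d\Ff_T(w)-D_T}\le\tfrac{1}{\mu c}$, which is the assertion.

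I do not anticipate any genuine obstacle: the argument is a one-line pointwise estimate followed by integration. The only points worth a remark are that $A_w$ is genuinely an $L^2$-bounded coefficient, so that all operators in play are well defined as maps $\Wbb_T\to\V_T$, and that the bound $\tfrac{1}{\mu c}$ on $\Norm{A_w(s)-A}_{\Ll(\R^n)}$ is independent of~$s$ and of~$T$ — which is exactly what the definition of $\Bb^\mu_T$ (in turn of $B^\mu$, using the right-inverse bound $c$ from Lemma~\ref{le:Q_T}) is designed to supply. One may note in passing that the computation controls $\Norm{(d\Ff_T(w)-D_T)\zeta}_{\V_T}$ already by $\tfrac{1}{\mu c}\Norm{\zeta}_{\V_T}$, so the statement proved is in fact slightly stronger than the $\Wbb_T\to\V_T$ bound claimed.
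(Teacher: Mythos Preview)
Your proof is correct and follows essentially the same approach as the paper: identify $d\Ff_T(w)-D_T$ as the multiplication operator by $d\Nabla{}f(w(\cdot))-A$, use the defining pointwise bound of $B^\mu$ along $w$, and integrate to pass from $\V_T$ to $\Wbb_T$. The paper's proof is the same one-line chain of inequalities, including the intermediate $L^2$-to-$L^2$ bound you remark on at the end.
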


\begin{proof}
Given $w\in \Bb^\mu_T$, there is the estimate
\[
   \norm{(d\Ff_T(w)-D_T)\zeta}_2
   \stackrel{(\ref{eq:dFf})}{=}\norm{(d\Nabla{}f(w)-A)\zeta}_2
   \le\tfrac{1}{\mu c}\norm{\zeta}_2
   \le\tfrac{1}{\mu c}\norm{\zeta}_{1,2} 
\]
for every $\zeta\in \Wbb_T$.
\end{proof}

\begin{corollary}[{to Lemma~\ref{le:sob-emb-T}}]\label{cor:delta}
There is a monotone decreasing function
$\delta\colon [2,\infty)\to(0,\infty)$,
$\mu\mapsto\delta(\mu)=:\delta_\mu$,
independent of $T$,
such that for $\mu\in [2,\infty)$
the $\delta_\mu$-ball about $0_T$ in $\Wbb_T$ 
is contained in $\Bb^\mu_T$,
in symbols $B_{\delta_\mu}(0_T;\Wbb_T)\subset \Bb^\mu_T$.
\end{corollary}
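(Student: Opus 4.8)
The plan is to deduce the inclusion purely from the openness of the finite–dimensional sets $B^\mu\subset\R^n$ together with the uniform–in–$T$ Sobolev estimate of Lemma~\ref{le:sob-emb-T}: a path that is small in $\Wbb_T=W^{1,2}([-T,T],\R^n)$ is uniformly small in $\R^n$, with a size bound (the constant $2$) that is independent of $T$.

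First I would record the relevant facts about $B^\mu$. By definition it is the preimage of the open interval $[0,\tfrac1{\mu c})$ under the continuous map $z\mapsto\norm{d\Nabla{}f(z)-A}$, hence it is open; and it contains the origin, because $\norm{d\Nabla{}f(0)-A}=0<\tfrac1{\mu c}$ by~(\ref{eq:D_T}). Moreover, for $2\le\mu\le\mu'$ one has $\tfrac1{\mu'c}\le\tfrac1{\mu c}$, so $B^{\mu'}\subset B^{\mu}$; that is, $\mu\mapsto B^\mu$ is decreasing. Note also that $c$ is a fixed constant independent of $T$ (Lemma~\ref{le:Q_T}), so $B^\mu$ and everything derived from it below is $T$–independent.

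Next I would produce the function $\delta$. Set
\[
   \rho(\mu):=\min\bigl\{\,1,\ \dist\bigl(0,\ \R^n\setminus B^\mu\bigr)\,\bigr\},
\]
with the convention $\dist(0,\emptyset)=+\infty$. Since $B^\mu$ is open and contains $0$ we have $\rho(\mu)\in(0,1]$, and since $\mu\mapsto\R^n\setminus B^\mu$ is increasing, $\mu\mapsto\dist(0,\R^n\setminus B^\mu)$ is non‑increasing; capping at $1$ preserves this, so $\rho$ is monotone decreasing. By the definition of the distance, $|v|<\rho(\mu)$ forces $v\notin\R^n\setminus B^\mu$, i.e.\ $v\in B^\mu$. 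Now define $\delta\colon[2,\infty)\to(0,\infty)$ by $\mu\mapsto\delta_\mu:=\tfrac12\rho(\mu)$; it is monotone decreasing and independent of $T$.

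Finally I would check $B_{\delta_\mu}(0_T;\Wbb_T)\subset\Bb^\mu_T$. If $\norm{w}_{1,2}<\delta_\mu$, then by the vector–valued form of the Sobolev estimate in Lemma~\ref{le:sob-emb-T},
\[
   \norm{w}_{L^\infty[-T,T]}\le 2\,\norm{w}_{W^{1,2}[-T,T]}<2\delta_\mu=\rho(\mu),
\]
so $|w(s)|<\rho(\mu)$, hence $w(s)\in B^\mu$, for every $s\in[-T,T]$; by the definition of $\Bb^\mu_T$ this is exactly $w\in\Bb^\mu_T$. I do not expect any real obstacle here: the entire content is that the $L^\infty$/$W^{1,2}$ constant in Lemma~\ref{le:sob-emb-T} does not depend on $T$ for $T\ge1$, while $\rho(\mu)$ lives on the $\R^n$ side and is therefore automatically $T$–independent; the rest is routine point–set topology and monotonicity bookkeeping.
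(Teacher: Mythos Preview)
Your proof is correct and follows the same idea as the paper: use the uniform Sobolev estimate $\norm{w}_\infty\le 2\norm{w}_{1,2}$ to reduce the path-space inclusion to a pointwise one in $\R^n$. The paper's own proof is a single line that leaves the construction of $\delta_\mu$ implicit; you have simply made that construction explicit via $\rho(\mu)=\min\{1,\dist(0,\R^n\setminus B^\mu)\}$ and $\delta_\mu=\tfrac12\rho(\mu)$, which is exactly the natural way to fill in the details.
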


\begin{proof}
By Lemma~\ref{le:sob-emb-T}
for $w\in B_{\delta_\mu}(0_T;\Wbb_T)$ we have
$\norm{w}_\infty\le 2\delta_\mu$.
\end{proof}

\begin{definition}[Newton-Picard map]
Let $c>0$ be the right inverse bound of Lemma~\ref{le:Q_T} and let
\begin{equation}\label{eq:delta_4}
   \delta:=\delta_4>0
\end{equation}
be the value in Corollary~\ref{cor:delta} for $\mu=4$.\footnote{
  To define the Newton-Picard map via the McDuff-Salamon
  Proposition~\ref{prop:wonder-prop-NEW} and obtain $C^0$-convergence
  (Theorem~\ref{thm:loc-gluing-C^m} with $m=0$)
  for the gluing map it is sufficient
  to pick $\delta_\mu$ for $\mu=2$.
  However, to obtain $C^1$-convergence
  (Theorem~\ref{thm:loc-gluing-C^m} with $m=1$)
  we need to choose $\mu=4$ in order to satisfy
  assumption~(\ref{eq:A.3.4-NEW-modified})
  ($\frac{1}{4c}$ as opposed to $\frac{1}{2c}$)
  in the tangent map Theorem~\ref{thm:tangent-map}.
  }
For $T\ge 1$ we can now, in view of Lemma~\ref{le:bd-diff-lin} with
$\mu=4$ and the McDuff-Salamon
Proposition~\ref{prop:wonder-prop-NEW} with $x_0=0_T$,
define a \textbf{Newton-Picard map}
\begin{equation}\label{eq:NP-map-app}
   \Nn_T\colon\Wbb_T
   \supset\Bb^4_T\supset 
   U_0(\delta)\stackrel{\text{(\ref{eq:U_0(delta)})}}{:=}
   \Bigl(
      B_{\frac{\delta}{8}}(0_T;\Wbb_T)\cap\left\{\norm{\Ff_T}<\tfrac{\delta}{4c}\right\}
   \Bigr)
   \to \Wbb_T .
\end{equation}
Here the inclusion $U_0(\delta)\subset \Bb^4_T$ holds by Corollary~\ref{cor:delta}.
\end{definition}

By~(\ref{eq:wonder-prop-NEW}) the Newton-Picard map $\Nn_T$
enjoys the following properties:
\[
   \Ff_T\circ \Nn_T(w)=0
   ,\qquad
   \Nn_T(w)-w\in\im Q_T
   ,\qquad
   \Nn_T(w)\in B_\delta(0_T;\Wbb_T) ,
\]
and, moreover, one has the estimate
\begin{equation}\label{eq:wonder-prop-diff}
   \norm{(\Nn_T-\id) (w)}_{\Wbb_T}\le 2c\norm{\Ff_T(w)}_{\V_T} .
\end{equation}
Furthermore, by Corollary~\ref{cor:NP-map}, respectively
identity~(\ref{eq:1-P})), we have
\begin{equation}\label{eq:NP-0_T}
   \Nn_T(0_T)=0_T ,\qquad
   d\Nn_T|_{0_T}\stackrel{\text{(\ref{eq:1-P})}}{=}\Pi_T ,
\end{equation}
where the projection $\Pi_T$, see~(\ref{eq:Pi_T}), is uniformly
bounded in $T$, by Lemma~\ref{le:projection-onto-E}.

\boldmath
\subsubsection*{Pre-gluing takes values in domain of Newton-Picard map}
\unboldmath

The next lemma and Proposition~\ref{prop:approx-zero}
show that, for $T\ge 3$ large enough, the
pre-gluing map $\Pp_T$ takes values in the domain of the Newton-Picard
map $\Nn_T$.

\begin{lemma}[The neighborhoods $\Uu^\mu_\pm$.]
\label{le:Uu_pm}
Let $\delta\colon [2,\infty)\to(0,\infty)$, $\mu\mapsto\delta_\mu$,
be the monotone decreasing function
in Corollary~\ref{cor:delta}. We abbreviate $\delta:=\delta_4$.
Then there exists a nested family of open and bounded neighborhoods
$\Uu^\mu_+\subset \Ww^{\rm s}$ of $0_+$ and
$\Uu^\mu_-\subset \Ww^{\rm u}$ of $0_-$ such that
$\norm{\Pp_T(w_+,w_-)}_{1,2}<\min\{\frac{\delta}{8},\delta_\mu\}$
whenever $\mu\ge2$, $w_+\in \Uu^\mu_+$, $w_-\in \Uu^\mu_-$, and $T\ge 3$.
\end{lemma}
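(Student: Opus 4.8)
The plan is to use only two facts about the pre-gluing map: it is \emph{linear}, and by Lemma~\ref{le:unif-bd-Pp_T} it is uniformly bounded, $\norm{\Pp_T}\le b$ for all $T\ge 3$ with $b$ independent of $T$. Together these give, for every $(w_+,w_-)\in\Wbb_+\times\Wbb_-$ and every $T\ge 3$,
\[
   \norm{\Pp_T(w_+,w_-)}_{1,2}
   \le b\,\norm{(w_+,w_-)}_{\Wbb_+\times\Wbb_-}
   \le b\left(\norm{w_+}_{1,2}+\norm{w_-}_{1,2}\right),
\]
the last step holding since the product Hilbert norm is dominated by the sum of the factor norms.

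First I would set, for $\mu\ge 2$,
\[
   r_\mu:=\tfrac{1}{2b}\min\Bigl\{\tfrac{\delta}{8},\delta_\mu\Bigr\}>0,
\]
which is positive because $\delta=\delta_4>0$ and $\delta_\mu>0$ by Corollary~\ref{cor:delta}, and which is monotone decreasing in $\mu$ because $\mu\mapsto\delta_\mu$ is. Then I would define
\[
   \Uu^\mu_+:=\{w_+\in\Ww^{\rm s}\mid\norm{w_+}_{1,2}<r_\mu\}
   ,\qquad
   \Uu^\mu_-:=\{w_-\in\Ww^{\rm u}\mid\norm{w_-}_{1,2}<r_\mu\} .
\]
Since $0_+\in\Ww^{\rm s}$ and $\Ww^{\rm s}$ carries the subspace topology from $\Wbb_+$, the set $\Uu^\mu_+$ equals $B_{r_\mu}(0_+;\Wbb_+)\cap\Ww^{\rm s}$, hence is an open neighborhood of $0_+$ in $\Ww^{\rm s}$, and it is bounded because it has finite $\Wbb_+$-radius; likewise for $\Uu^\mu_-$.

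It remains to check the two required properties. Nestedness: if $2\le\mu\le\mu'$ then $\delta_\mu\ge\delta_{\mu'}$, so $r_\mu\ge r_{\mu'}$ and therefore $\Uu^{\mu'}_\pm\subset\Uu^{\mu}_\pm$. The norm bound: for $\mu\ge 2$, $w_+\in\Uu^\mu_+$, $w_-\in\Uu^\mu_-$, and $T\ge 3$, the displayed estimate together with $\norm{w_\pm}_{1,2}<r_\mu$ gives
\[
   \norm{\Pp_T(w_+,w_-)}_{1,2}
   < b(r_\mu+r_\mu)=2br_\mu=\min\Bigl\{\tfrac{\delta}{8},\delta_\mu\Bigr\},
\]
as claimed. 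There is essentially no obstacle here: the statement is a direct consequence of linearity of $\Pp_T$ and the $T$-uniform operator bound of Lemma~\ref{le:unif-bd-Pp_T}; the only points meriting a moment's care are that $\Uu^\mu_\pm$ are genuine neighborhoods \emph{inside the submanifolds} $\Ww^{\rm s},\Ww^{\rm u}$ (handled by intersecting ambient balls with the submanifolds) and that the family is nested (handled by the monotonicity of $\mu\mapsto\delta_\mu$).
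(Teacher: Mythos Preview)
Your proof is correct and follows essentially the same route as the paper: both arguments define $\Uu^\mu_\pm$ as the intersection of a small ambient ball with $\Ww^{\rm s/u}$ and use the $T$-uniform operator bound on $\Pp_T$ to force $\norm{\Pp_T(w_+,w_-)}_{1,2}$ below $\min\{\tfrac{\delta}{8},\delta_\mu\}$. The only cosmetic difference is that the paper re-derives the explicit estimate $\norm{w_T}_{1,2}^2\le 4(1+\norm{\beta'}_\infty^2)(\norm{w_+}_{1,2}^2+\norm{w_-}_{1,2}^2)$ from the formula~(\ref{eq:w_T}), whereas you invoke the already-proved Lemma~\ref{le:unif-bd-Pp_T} directly---a slightly cleaner packaging of the same content.
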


While the estimate by $\frac{\delta}{8}$ serves in~(\ref{eq:NP-map-app}),
the estimate by $\delta_\mu$ for some $\mu\ge5$ will be used in the proof of
Theorem~\ref{thm:gluing-diffeo} further below.

\begin{proof}
For $\delta^\prime>0$ let $B_{\delta^\prime}(0_+)\subset \Wbb_+$ be the
open radius $\delta^\prime$ ball about $0_+$, analogously for
$B_{\delta^\prime}(0_-)$. 
Pick $w_+\in B_{\delta^\prime}(0_+)\cap\Ww^{\rm s}$
and $w_-\in B_{\delta^\prime}(0_-)\cap\Ww^{\rm u}$
and, for $T\ge 3$, abbreviate $w_T:=\Pp_T(w_+,w_-)\colon[-T,T]\to\R^n$.
Since by~(\ref{eq:w_T}) at each time $s$
at most one of $w_+(T+s)$ and $w_-(-T+s)$ 
comes with a nonzero factor,
we obtain inequality one in the following estimate
\begin{equation*}
\begin{split}
   &\norm{w_T}_{1,2}^2\\
   &=\norm{w_T}_{2}^2+\norm{\p_s w_T}_{2}^2\\
   &\le\norm{w_+}_2^2+\norm{w_-}_2^2
   +2\norm{\beta^\prime}_\infty^2
   \left(\norm{w_+}_2^2+\norm{w_-}_2^2\right)
   +2\left(\norm{\p_sw_+}_2^2+\norm{\p_sw_-}_2^2\right)
  \\
   &\le
   2(1+\norm{\beta^\prime}_\infty^2) \left(\norm{w_+}_{1,2}^2
   +\norm{w_-}_{1,2}^2\right)
   \\
   &\le 4(1+\norm{\beta^\prime}_\infty^2)(\delta^\prime)^2 .
   \\
\end{split}
\end{equation*}
Choose $\delta^\prime=\delta^\prime(\mu):=
\frac{1}{4}\min\{\frac{\delta}{8},\delta_\mu\}
\sqrt{1+\norm{\beta^\prime}_\infty^2}$
and define the open $\delta^\prime$-neighborhoods
in the stable, respectively unstable, manifolds as follows
\[
   \Uu^\mu_+
   := \left( B_{\delta^\prime}(0_+)\cap\Ww^{\rm s}\right)
   \subset \Uu_+:=\Uu^2_+
   ,\quad
   \Uu^\mu_-
   :=\left( B_{\delta^\prime}(0_-)\cap\Ww^{\rm u}\right)
   \subset \Uu_-:=\Uu^2_- .
\]
Then the lemma holds by the previous displayed estimate.
\end{proof}

\boldmath
\section{Gluing}
\label{sec:gluing}
\unboldmath

Pick $\eps\in(0,\sigma)$ where $\sigma=\sigma(A)$ is the spectral
gap~(\ref{eq:spec-gap}).
Let $c>0$ be the constant in the right inverse estimate,
Lemma~\ref{le:Q_T}.
Let $\delta=\delta_4>0$ be the constant in Corollary~\ref{cor:delta} and let
\begin{equation}\label{eq:Uu_+-}
   \Uu_{+/-}:=\Uu^2_{+/-}\subset \Ww^{\rm s/u}
   ,\qquad
   K_\pm:=\cl \Uu_\pm,
\end{equation}
be the open sets in Lemma~\ref{le:Uu_pm} and, respectively,
the compact sets given by the closure of
$\Uu_\pm$ in the (finite dimensional) stable/unstable manifold.
Thus, by Proposition~\ref{prop:approx-zero} for $m=0$,
we get a constant $C=C(K_+,K_-)>0$.
Pick $T_0\ge3$ such that
\begin{equation}\label{eq:T_0}
   C e^{-\eps T_0}<\frac{\delta}{4c}
\end{equation}
see~(\ref{eq:T^mFf-decay}).
By Proposition~\ref{prop:approx-zero} for $m=0$ and
Lemma~\ref{le:Uu_pm} it holds that
\begin{equation}\label{eq:cond-1}
   \norm{\Ff_T\circ \wp_T(w_+,w_-)}
   \stackrel{(\ref{eq:T^mFf-decay})}{<}\tfrac{\delta}{4c}
   ,\qquad
   \norm{\wp_T(w_+,w_-)}<\tfrac{\delta}{8} ,
\end{equation}
whenever $T\ge T_0$ and $w_\pm\in\Uu_\pm$
and were $\wp_T:=\Pp_T|_{\Ww^{\rm s}\times \Ww^{\rm u}}
   \colon \Ww^{\rm s}\times \Ww^{\rm u}\to \Wbb_T$,
see~(\ref{eq:wp}), is the restriction of the (linear) pre-gluing map
$\Pp_T\colon\Wbb_+\times\Wbb_-\to\Wbb_T$, see~(\ref{eq:Pp_T}).
In other words, the pre-gluing map $\wp_T$ maps $\Uu_+\times\Uu_-$
into the domain of the Newton-Picard map $\Nn_T$,
see~(\ref{eq:NP-map-app}), whenever $T\ge T_0$.

\begin{definition}[Gluing map]\label{def:gluing-map}
For $T\ge T_0$ the gluing map is the composition of smooth maps
\begin{equation}\label{eq:gamma_T}
   \gamma_T:=\Nn_T\circ\wp_T\colon \Uu_+\times\Uu_-
   \longrightarrow
   U_0(\delta)
   \longrightarrow
   \Wbb_T.
\end{equation}
The linearized gluing map is the composition
\[
   d\gamma_T|_{(w_+,w_-)}
   =d\Nn_T|_{\wp_T(w_+,w_-)}\circ d\wp_T|_{(w_+,w_-)}
   \colon T_{w_+}\Ww^{\rm s}\times T_{w_-}\Ww^{\rm u}\to\Wbb_T.
\]
\end{definition}

\begin{lemma}\label{le:d-gamma}
It holds $\gamma_T(0_+,0_-)=0_T$.
Furthermore, the differential of the gluing map $\gamma_T$ at 
$(0_+,0_-)$ is the infinitesimal gluing map $\Gamma_T$, in
symbols
$$
    d\gamma_T|_{(0_+,0_-)}=\Pi_T\circ d\wp_T|_{(0_+,0_-)} 
   \stackrel{(\ref{eq:Gamma_T-def})}{=:}
   \Gamma_T\colon\E^+\times\E^-\to\E_T .
$$
\end{lemma}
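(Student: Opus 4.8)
The plan is to obtain both assertions directly from facts already established — that pre-gluing sends $(0_+,0_-)$ to the constant path $0_T$, and that the Newton-Picard map fixes $0_T$ with differential $\Pi_T$ there — so that no new estimate is required.

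First I would verify $\gamma_T(0_+,0_-)=0_T$. By~(\ref{eq:0_T}) one has $\wp_T(0_+,0_-)=\Pp_T(0_+,0_-)=0_T$, and $0_T$ lies in the domain $U_0(\delta)$ of $\Nn_T$ because $\Ff_T(0_T)=0<\frac{\delta}{4c}$ and $0_T\in B_{\delta/8}(0_T;\Wbb_T)$ (alternatively this is~(\ref{eq:cond-1}) for $T\ge T_0$ evaluated at $(w_+,w_-)=(0_+,0_-)$). The first identity in~(\ref{eq:NP-0_T}) then gives $\Nn_T(0_T)=0_T$, whence $\gamma_T(0_+,0_-)=\Nn_T(\wp_T(0_+,0_-))=\Nn_T(0_T)=0_T$.

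For the differential I would apply the chain rule to the composition $\gamma_T=\Nn_T\circ\wp_T$ of~(\ref{eq:gamma_T}). The map $\wp_T$ is smooth, being the restriction of the linear map $\Pp_T$ (see the remark after~(\ref{eq:wp})), and $\Nn_T$ is differentiable on its domain by the abstract Newton-Picard results of Appendix~\ref{sec:Newton-Picard-map}; hence, using $\wp_T(0_+,0_-)=0_T$,
\[
   d\gamma_T|_{(0_+,0_-)}
   =d\Nn_T|_{0_T}\circ d\wp_T|_{(0_+,0_-)} .
\]
The second identity in~(\ref{eq:NP-0_T}) reads $d\Nn_T|_{0_T}=\Pi_T$, with $\Pi_T=P_{\E_T,\K_T}$ the projection of~(\ref{eq:Pi_T}), so
\[
   d\gamma_T|_{(0_+,0_-)}
   =\Pi_T\circ d\wp_T|_{(0_+,0_-)}
   \colon\E^+\times\E^-\to\Wbb_T\to\E_T ,
\]
which is exactly the infinitesimal gluing map $\Gamma_T$ as defined in~(\ref{eq:Gamma_T-def}). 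Here I would also recall~(\ref{eq:dwp=P_T}): since $T_{0_+}\Ww^{\rm s}\times T_{0_-}\Ww^{\rm u}=\E^+\times\E^-$ and $\wp_T$ is a restriction of the linear map $\Pp_T$, the differential $d\wp_T|_{(0_+,0_-)}$ is literally $\Pp_T$ restricted to $\E^+\times\E^-$, which matches the domain and target decomposition appearing in~(\ref{eq:Gamma_T-def}).

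The only point that needs a little attention — rather than a genuine obstacle — is that $\Nn_T$ be differentiable at the interior point $0_T$ of $U_0(\delta)$ and that the chain-rule composition be type-correct at the level of tangent spaces; both are already handled, the first through Corollary~\ref{cor:NP-map} (equivalently identity~(\ref{eq:1-P})) underlying~(\ref{eq:NP-0_T}). Everything else is a direct substitution.
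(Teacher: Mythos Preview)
Your proof is correct and follows essentially the same route as the paper: compute $\wp_T(0_+,0_-)=0_T$, use $\Nn_T(0_T)=0_T$, apply the chain rule, and invoke $d\Nn_T|_{0_T}=\Pi_T$. The only difference is that the paper's proof actually \emph{establishes} identity~(\ref{eq:1-P}) (by showing $Q_TD_T=P_{\K_T,\E_T}$ via $\E_T=\ker D_T$ and $\im Q_T=\K_T$), whereas you cite it as given; since~(\ref{eq:NP-0_T}) forward-references~(\ref{eq:1-P}), the paper had to supply that verification somewhere, and chose to do it here.
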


\begin{proof}
We get that $\gamma_T(0_+,0_-)
\stackrel{(\ref{eq:0_T})}{=}
\Nn_T(\wp_T (0_+,0_-))
\stackrel{(\ref{eq:NP-0_T})}{=}
\Nn_T(0_T)=0_T$.

By definition of $\gamma_T$ and the chain rule we get the first
equality
\begin{equation}\label{eq:bkjbkj3bn}
\begin{split}
   d\gamma_T|_{(0_+,0_-)}
   &=d\Nn_T|_{\underbrace{\wp_T(0_+,0_-)}_{\text{$=0_T$ by~(\ref{eq:0_T})}}}\circ
   d\wp_T|_{(0_+,0_-)}
   \\
   &=\left(\Id - Q_T D_T\right)\circ d\wp_T|_{(0_+,0_-)}
   \\
   &=
   \Pi_T\circ d\wp_T|_{(0_+,0_-)}
   \\
   &=:\Gamma_T
\end{split}
\end{equation}
and the second equality holds since $d\Nn_T(0_T)=\Id-Q_T D_T$,
by Corollary~\ref{cor:NP-map} with $x_0=0_T$ and $P=Q_T D_T$.
\\
Now $\Pi_T\colon \Wbb_T\to\Wbb_T$ is the projection onto $\E_T$ along
$\K_T$, by definition~(\ref{eq:Pi_T}), in symbols $\Pi_T=P_{\E_T,\K_T}$.
Thus, to see that
\begin{equation}\label{eq:1-P}
   {\color{gray} d\Nn_T(0_T)=\,}
   \Id-Q_TD_T=P_{\E_T,\K_T}{\color{gray} \,=:\Pi_T},
\end{equation}
it remains to show that the composition
\[
  Q_TD_T=P_{\K_T,\E_T}
\]
is the projection onto $\K_T$ along $\E_T$. This follows since 
\[
   Q_T D_T
   \stackrel{\rm (\ref{eq:Q_T})}{=}F_T^{-1} d\Ff_T|_{0_T}
   =\left(d\Ff_T|_{0_T}|_{\K_T}\right)^{-1} d\Ff_T|_{0_T}
   \colon W_T\to\V_T\to \K_T
\]
and $\E_T=\ker d\Ff_T|_{0_T}=\ker D_T$
and $Q_T=F_T^{-1}\colon\V_T\to \K_T$
where $F_T$ is the restriction of $D_T$ to $\K_T$, see~(\ref{eq:F_T}).
\end{proof}

\boldmath
\subsection{Diffeomorphism onto image}
\label{sec:diff-image}
\unboldmath

\begin{theorem}\label{thm:gluing-diffeo}
There are open neighborhoods 
$\Oo_+\subset\Uu_+\subset\Ww^{\rm s}$ of $0_+$
and $\Oo_-\subset \Uu_-\subset\Ww^{\rm u}$ of $0_-$
   \todo[color=yellow!40]{\small key point: domain indep. of $T$!!}
such that for every $T\ge T_0$ the restricted gluing map
\[
   \gamma_T
   \colon
   \Oo_+\times\Oo_-
   \to\Mm_T
   ,\quad
   (w_+,w_-)\mapsto \Nn_T\circ\wp_T (w_+,w_-)
\]
is a diffeomorphism onto its image $\Oo_T$.
\end{theorem}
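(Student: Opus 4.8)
The plan is to apply the quantitative inverse function theorem (Theorem~\ref{thm:InvFT-quant}) to $\gamma_T$ near $(0_+,0_-)$, uniformly in $T\ge T_0$, and then take $\Oo_\pm$ to be the resulting uniform balls. First I would record what Lemma~\ref{le:d-gamma} gives us: $\gamma_T(0_+,0_-)=0_T$ and $d\gamma_T|_{(0_+,0_-)}=\Gamma_T$, the infinitesimal gluing map, which by Lemma~\ref{le:Gamma_T-iso} is an isomorphism $\E^+\times\E^-\to\E_T$ with $\norm{\Gamma_T}\le1$ and $\norm{\Gamma_T^{-1}}\le k=1/(1-e^{-12\sigma})$, both bounds independent of $T$. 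This handles the hypothesis of Theorem~\ref{thm:InvFT-quant} corresponding to~(\ref{eq:vgvh-1}): the derivative at the base point is invertible with a uniformly bounded inverse. Note that since $\gamma_T$ lands in $\Mm_T$ (the Newton-Picard map produces genuine zeros of $\Ff_T$) and $\E_T=T_{0_T}\Mm_T$, the derivative is an isomorphism onto the tangent space of the target manifold, so it is legitimate to speak of $\gamma_T$ as a map into the finite-dimensional manifold $\Mm_T$ and ask for it to be a local diffeomorphism.

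Next I would verify the Lipschitz-type estimate on the derivative, i.e.\ the analogue of~(\ref{eq:vgvh-2}): that $\norm{d\gamma_T|_{(w_+,w_-)}-d\gamma_T|_{(0_+,0_-)}}$ is small for $(w_+,w_-)$ in a ball of radius independent of $T$. By the chain rule, $d\gamma_T|_{(w_+,w_-)}=d\Nn_T|_{\wp_T(w_+,w_-)}\circ d\wp_T|_{(w_+,w_-)}$. Here $d\wp_T=\Pp_T|$ is linear (so its "variation" is entirely in the domain of definition, via the identifications $\theta_{w_\pm}$ of Lemma~\ref{le:Theta_T}), and $d\Nn_T$ varies slowly because, by Remark~\ref{rem:NP-map-mu} and Remark~\ref{rem:A} on the linearized Newton-Picard map, the operator $d\Nn_T$ along the $\delta_\mu$-ball about $0_T$ stays within a controlled distance of $d\Nn_T|_{0_T}=\Pi_T$; the key input is Corollary~\ref{cor:delta}, which provides the monotone function $\delta(\mu)$ — independent of $T$ — such that $\norm{d\Ff_T(\cdot)-D_T}\le 1/\mu c$ on the $\delta_\mu$-ball, together with Lemma~\ref{le:Theta_T} giving, for prescribed $\eps$, neighborhoods $\Oo^\pm_\eps$ on which $\Theta_T(w_+,w_-)=d\wp_T|_{(0_+,0_-)}-d\wp_T|_{(w_+,w_-)}\circ(\theta_{w_+}^{-1},\theta_{w_-}^{-1})$ has norm $\le\eps$, again uniformly in $T$. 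Combining these two "slow variation" statements with the uniform bounds $\norm{\Pi_T}\le d$ (Lemma~\ref{le:projection-onto-E}), $\norm{Q_T}\le c$ (Lemma~\ref{le:Q_T}), $\norm{\Pp_T}\le b$ (Lemma~\ref{le:unif-bd-Pp_T}) yields: given any target $\eta>0$, there is a radius $r(\eta)>0$, independent of $T$, such that $\norm{d\gamma_T|_{(w_+,w_-)}-\Gamma_T}\le\eta$ whenever $\dist((w_+,w_-),(0_+,0_-))<r(\eta)$.

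With these two ingredients in hand, I would invoke Theorem~\ref{thm:InvFT-quant}: choosing $\eta$ small relative to $1/k=1/\norm{\Gamma_T^{-1}}$ (e.g.\ $\eta\le 1/(2k)$, which is possible with a $T$-independent radius by the previous paragraph), the quantitative IFT produces a ball $B_{r_0}$ about $(0_+,0_-)$, with $r_0>0$ depending only on $k$, $\eta$, $r(\eta)$ and hence independent of $T$, on which $\gamma_T$ is a diffeomorphism onto its image. Set $\Oo_\pm$ to be the corresponding factor neighborhoods (intersected with $\Uu_\pm$ and with the $\Oo^\pm_\eps$ from Lemma~\ref{le:Theta_T}, all of which contain a fixed ball), and $\Oo_T:=\gamma_T(\Oo_+\times\Oo_-)$. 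The main obstacle is the uniformity in $T$ of the derivative-variation estimate: one must make sure that the neighborhoods coming from Lemma~\ref{le:Theta_T} and from Corollary~\ref{cor:delta}, together with the composition bookkeeping in the chain rule, genuinely produce a $T$-independent radius — this is exactly where the careful choices in the construction (the complement $\K_T$, the quadratic-estimate-free Newton-Picard map, and the uniform Sobolev estimate of Lemma~\ref{le:sob-emb-T}) pay off, and it is the only step that is more than bookkeeping.
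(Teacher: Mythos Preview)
Your plan is correct and matches the paper's proof essentially step for step: apply the quantitative IFT (Theorem~\ref{thm:InvFT-quant}) with~(\ref{eq:vgvh-1}) coming from Lemma~\ref{le:d-gamma} plus Lemma~\ref{le:Gamma_T-iso}, and~(\ref{eq:vgvh-2}) coming from the decomposition via $\Theta_T$ (Lemma~\ref{le:Theta_T}) together with the Newton-Picard derivative control (Remarks~\ref{rem:A} and~\ref{rem:NP-map-mu}, Corollary~\ref{cor:delta}) and the uniform bound on $\Pi_T$ (Lemma~\ref{le:projection-onto-E}). The paper makes the explicit choices $\Oo_\pm=\Oo^\pm_{1/8kd}\cap\Uu_\pm^{4k+1}$ (so $\eps=\tfrac{1}{8kd}$ and $\mu=4k+1$) to get the bound $\tfrac{1}{2k}$ in~(\ref{eq:vgvh-2}), which is exactly the quantitative version of your ``choose $\eta\le 1/(2k)$'' step.
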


Note that the domain $\Oo_+\times\Oo_-$ of $\gamma_T$ does not depend
on $T$.

\begin{proof}
Given $T_0\ge 3$ as prior to~(\ref{eq:cond-1}), pick $T\ge T_0$.
The theorem is a consequence of the quantitative inverse function
Theorem~\ref{thm:InvFT-quant} (IFT), where
$F$ is given by a representative of $\gamma_T$ in local coordinate
charts; see~(\ref{eq:theta_+}).
   \todo[color=yellow!40]{\small more loc. triv. than loc. coords.!!}
In order to apply the quantitative IFT two conditions,
(\ref{eq:vgvh-1}) and (\ref{eq:vgvh-2}), are to be checked.

We verify~(\ref{eq:vgvh-1}):
Recall that the inverse of the infinitesimal gluing
map $\Gamma_T$ is uniformly bounded by a constant
$k=1/(1-e^{-12\sigma})>1$, see Lemma~\ref{le:Gamma_T-iso}. So, by
Lemma~\ref{le:d-gamma}, the inverse of $d\gamma_T|_{(0_+,0_-)}=dF|_0$
is bounded by $k$, uniformly~in~$T$.

We verify~(\ref{eq:vgvh-2}):
\todo[color=yellow!40]{\small $\eps=1/8kd$ $\mu=4k+1$}
To check this condition we choose
\begin{equation}\label{eq:Oo_+-}
\boxed{
   \Oo_\pm:=\Oo^\pm_{1/8kd}\cap\Uu_\pm^{4k+1},
}
   \qquad \eps=\tfrac{1}{8kd},\quad \mu=4k+1\ge 5,
\end{equation}
where $d$ is the ($T$-independent) bound of $\Pi_T$ from
Lemma~\ref{le:projection-onto-E}
and where the open origin neighborhoods $\Oo^\pm_\eps$ and
$\Uu_\pm^\mu$ in $\Ww^{\rm s/u}$
were defined in Lemma~\ref{le:Theta_T} and Lemma~\ref{le:Uu_pm},
respectively. Since $k\ge 1$, Lemma~\ref{le:Uu_pm}
tells that $\Uu_\pm^{4k+1}\subset \Uu_\pm^2=:\Uu_\pm$,
hence $\Oo_\pm\subset \Uu_\pm$.
Pick $(w_+,w_-)\in\Oo_+\times\Oo_-\subset \Uu_+\times\Uu_-$.

Recalling~(\ref{eq:theta_+})
we shall investigate the operator norm of the difference
\begin{equation*}
\begin{split}
   d\gamma_T|_{(w_+,w_-)}\circ (\theta_{w_+}^{-1}, \theta_{w_-}^{-1})
   -d\gamma_T|_{(0_+,0_-)}
   \colon \E^+\times \E^-\to\Wbb_T.
\end{split}
\end{equation*}
Abbreviate $x_1:=\wp_T(w_+,w_-)$. By definition
of $\gamma_T$ and of $\Theta_T(w_+,w_-)$ we get
\begin{equation}\label{eq:gbhjbjh-4}
\begin{split}
   &d\gamma_T|_{(w_+,w_-)}\circ (\theta_{w_+}^{-1}, \theta_{w_-}^{-1})
   -d\gamma_T|_{(0_+,0_-)}
\\
   &\stackrel{\text{(\ref{eq:gamma_T})}}{=}
   d\Nn_T|_{x_1}\circ \Pp_T
   \circ (\theta_{w_+}^{-1}, \theta_{w_-}^{-1})
   -d\Nn_T|_{0_T}\circ \Pp_T
   {\color{gray}
   \; -d\Nn_T|_{x_1}\circ \Pp_T
   +d\Nn_T|_{x_1}\circ \Pp_T
   }
\\
   &\stackrel{\text{(\ref{eq:Theta_T})}}{=}
   -d\Nn_T|_{x_1}\circ \Theta_T(w_+,w_-)
   +\left(d\Nn_T|_{x_1}-d\Nn_T|_{0_T}\right)\circ\Pp_T .
\end{split}
\end{equation}
By Remark~\ref{rem:A} for $Q:=Q_T$ and $P:=Q_TD_T$
and since the projection $\Id-P=\Pi_T$, see~(\ref{eq:1-P}),
has a ($T$-independent) bound $d$ by
Lemma~\ref{le:projection-onto-E} we get that
\begin{equation}\label{eq:gbhjbjh-5}
   \norm{d\Nn_T|_{x_1}}
   \stackrel{\text{(\ref{eq:dNn})}}{\le}
   \norm{\left(\Id+Q_T \,df(x_1)-P\right)^{-1}}\cdot\norm{\Id-P}
  \stackrel{\text{(\ref{eq:inverse-2})}}{\le} 2 d .
\end{equation}
Since $(w_+,w_-)\in\Oo^+_{1/8kd}\times\Oo^-_{1/8kd}$,
by Lemma~\ref{le:Theta_T}, we have that
\begin{equation}\label{eq:gbhjbjh-6}
   \norm{\Theta_T(w_+,w_-)}\le\tfrac{1}{8kd} .
\end{equation}
Furthermore, abbreviating $x_0:=0_T$, then $d\Nn_T|_{0_T}=\Id-P$
by~(\ref{eq:dNn}) with $x_1$ replaced by $x_0$ and using that
$Q_T \,df(x_0)=Q_TD_T=P$. Thus we get that
\begin{equation}\label{eq:gbhjbjh-1}
   \left(d\Nn_T|_{x_1}-d\Nn_T|_{0_T}\right) \circ\Pp_T
   \stackrel{\text{(\ref{eq:dNn})}}{=}
   \left(\bigl(\Id+Q_T \,df|_{x_1}-P\bigr)^{-1}-\Id\right)\left(\Id-P\right)
   \circ\Pp_T.
\end{equation}
Observe that
\begin{equation}\label{eq:gbhjbjh-2}
   \left(\Id-P\right)\circ\Pp_T
   \stackrel{\text{(\ref{eq:dwp=P_T})}}{=}
   \left(\Id-Q_TD_T\right) \circ d\wp_T|_{(0_+,0_-)}
   \stackrel{\text{(\ref{eq:bkjbkj3bn})}}{=}
   \Gamma_T .
\end{equation}
Since $(w_+,w_-)\in\Uu_+^{4k+1}\times\Uu_-^{4k+1}$,
it follows from Lemma~\ref{le:Uu_pm} that
$\norm{x_1}_{1,2}
=\norm{\Pp_T(w_+,w_-)}_{1,2}
<\min\{\frac{\delta}{8},\delta_{4k+1}\}$.
Hence $x_1$ lies in $\Uu^{4k+1}_T$
by Corollary~\ref{cor:delta}.
Therefore, by Lemma~\ref{le:bd-diff-lin},
it follows that $\norm{d\Ff_T(x_1)-D_T}\le\frac{1}{(4k+1) c}$.
\\
In view of Remark~\ref{rem:NP-map-mu} with $U_0$ given by $\Uu_0$
we obtain
\begin{equation}\label{eq:gbhjbjh-3}
   \norm{\bigl(\Id+Q_T \,df|_{x_1}-P\bigr)^{-1}-\Id}
   \le\tfrac{1}{4k}.
\end{equation}
By Lemma~\ref{le:Gamma_T-iso} we have $\norm{\Gamma_T}\le 1$.
Combining this fact with~(\ref{eq:gbhjbjh-1}),~(\ref{eq:gbhjbjh-2}),
and~(\ref{eq:gbhjbjh-3}) we conclude
\begin{equation}\label{eq:gbhjbjh-7}
   \norm{\left(d\Nn_T|_{x_1}-d\Nn_T|_{0_T}\right) \circ\Pp_T}
   \le\norm{\bigl(\Id+Q_T \,df|_{x_1}-P\bigr)^{-1}-\Id}
   \cdot\norm{\Gamma_T}
   \le\tfrac{1}{4k} .
\end{equation}
By~(\ref{eq:gbhjbjh-4}),~(\ref{eq:gbhjbjh-5}),~(\ref{eq:gbhjbjh-6}),
and~(\ref{eq:gbhjbjh-7}) we conclude
\[
   \norm{d\gamma_T|_{(w_+,w_-)}\circ (\theta_{w_+}^{-1}, \theta_{w_-}^{-1})
   -d\gamma_T|_{(0_+,0_-)}}
   \le \tfrac{2d}{8kd}+\tfrac{1}{4k}=\tfrac{1}{2k} .
\]
This verifies~(\ref{eq:vgvh-2}).
Corollary~\ref{cor:InvFT-quant} concludes the proof of
Theorem~\ref{thm:gluing-diffeo}.
\end{proof}

\boldmath
\subsection{Evaluation maps and convergence in $C^m$}
\label{sec:Cm-convergence}
\unboldmath
\begin{definition}
Consider the evaluation maps defined by
\[
   \ev\colon\Wbb^+\times\Wbb^-\to\R^n\times\R^n
   ,\quad
   (w_+,w_-)\mapsto \left(w_+(0),w_-(0)\right)
\]
and, for $T>0$, by
\[
   \ev_T\colon\Wbb_T\to\R^n\times\R^n
   ,\quad
   w\mapsto \left(w(-T),w(T)\right) .
\]
\end{definition}

Observe that both evaluation maps are linear.
Furthermore, for $T\ge3$ we have
$\ev_T\circ \Pp_T(w_+,w_-)=(w_+(0),w_-(0)=\ev(w_+,w_-)$.\footnote{
By definition~(\ref{eq:Pp_T}) of $w_T$ and the cut-off function
$\beta$, we get the identities
\begin{equation*}
\begin{split}
   w_T(-T) -w_+(0)
   &=
   \left(1-\beta(-T+2)\right) w_+(0)
   +\beta(-T-2)\, w_-(-2T)
   -w_+(0)
   \\
   &=
   -\beta(-T+2) w_+(0)+\beta(-T-2)\, w_-(-2T)
   \\
   &=0\quad\text{for $T\ge 3$}
\\
   w_T(T) -w_-(0)
   &=
   \left(1-\beta(T+2)\right) w_+(2T)
   +\beta(T-2)\, w_-(0)
   -w_-(0)
   \\
   &=
   \left(1-\beta(T+2)\right) w_+(2T)
   -\left(1-\beta(T-2)\right) w_-(0)
   \\
   &=0\quad\text{for $T\ge 3$} .
  \end{split}
\end{equation*}
Thus, by definition of the evaluation maps, for $T\ge3$ we get that
\[
   \ev_T(w_T)-\ev(w_+,w_-)
   =\left(w_T(-T)-w_+(0),w_T(T)-w_-(0)\right)
   =(0,0) .
\]
}
So there is the identity
\begin{equation}\label{eq:ev-ev_T}
   \ev_T\circ \Pp_T=\ev \colon
   \Wbb^+\times\Wbb^-\to\R^n\times\R^n
\end{equation}
whenever $T\ge3$.
Therefore, for tangent maps, we get the identity
\begin{equation}\label{eq:Tev-Tev_T}
   T^m\ev_T\circ T^m\Pp_T=T^m\ev \colon
   T^m\Wbb^+\times T^m\Wbb^-\to T^m\R^n\times T^m\R^n
\end{equation}
whenever $m\in\N$ and $T\ge3$.

\begin{lemma}\label{le:ev_T}
$\norm{\ev_T}\le 2\sqrt{2}$.
\end{lemma}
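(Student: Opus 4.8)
The plan is to reduce everything to the uniform Sobolev estimate of Lemma~\ref{le:sob-emb-T}. Equip $\R^n\times\R^n$ with the Euclidean norm, so that $\abs{(a,b)}^2=\abs{a}^2+\abs{b}^2$. Then for $w\in\Wbb_T$ one simply computes
\[
   \abs{\ev_T(w)}^2
   =\abs{w(-T)}^2+\abs{w(T)}^2
   \le 2\norm{w}_{L^\infty([-T,T],\R^n)}^2 ,
\]
since the value of $w$ at either endpoint is bounded by its sup norm over $[-T,T]$.

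Next I would invoke the vector-valued version of Lemma~\ref{le:sob-emb-T}, displayed right after that lemma, which gives $\norm{w}_{L^\infty([-T,T],\R^n)}\le 2\norm{w}_{W^{1,2}([-T,T],\R^n)}=2\norm{w}_{\Wbb_T}$ (this is where the hypothesis $T\ge 1$ enters, which holds throughout since $T\ge T_0\ge 3$). Substituting into the previous estimate yields $\abs{\ev_T(w)}^2\le 2\cdot 4\norm{w}_{\Wbb_T}^2=8\norm{w}_{\Wbb_T}^2$, hence $\abs{\ev_T(w)}\le 2\sqrt{2}\,\norm{w}_{\Wbb_T}$. Taking the supremum over $w$ in the unit ball of $\Wbb_T$ gives $\norm{\ev_T}\le 2\sqrt{2}$, as claimed.

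There is no genuine obstacle here: the only thing to be careful about is the norm convention on the product $\R^n\times\R^n$ (the Euclidean one, consistent with its use throughout the paper) and the fact that the constant $2$ in the Sobolev estimate is the non-optimal one valid uniformly in $T$ on a finite interval, not the sharp constant $1$ available on the half-line. Linearity of $\ev_T$, already observed in the text, is what makes the operator-norm bound equivalent to the pointwise estimate just derived.
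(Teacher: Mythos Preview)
Your proof is correct and follows exactly the same approach as the paper's: compute $\abs{\ev_T w}^2=\abs{w(-T)}^2+\abs{w(T)}^2\le 2\norm{w}_\infty^2\le 8\norm{w}_{\Wbb_T}^2$ via the uniform Sobolev estimate~(\ref{eq:sob-emb-T}). The paper's proof is just the one-line version of what you wrote.
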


\begin{proof}
$\abs{\ev_T w}_{\R^n\times\R^n}^2
=\abs{w(-T)}^2+\abs{w(T)}^2
\le 2\norm{w}_\infty^2
\le 8\norm{w}_{\Wbb_T}^2$
by~(\ref{eq:sob-emb-T}).
\end{proof}

To motivate Theorem~\ref{thm:loc-gluing-C^m} below
we first check the infinitesimal version in case $m=0$,
see~(\ref{eq:comm-diag-C0}).
The linearized evaluation maps are given by
\begin{equation*}
\begin{split}
   d\ev|_{(0_+,0_-)}=\ev\colon\E^+\times\E^-&\to\R^n\times\R^n
   \\
   (\xi,\eta)&\mapsto\left(\xi(0),\eta(0)\right)
\end{split}
\end{equation*}
and
\begin{equation*}
\begin{split}
   d\ev_T|_{0_T}=\ev_T\colon\E_T&\to\R^n\times\R^n
   \\
   \zeta&\mapsto\left(\zeta(-T),\zeta(T)\right) .
\end{split}
\end{equation*}
By Lemma~\ref{le:d-gamma} we get that
\[
   d\left(\ev_T\circ\gamma_T\right)|_{(0_+,0_-)}
   =\ev_T\circ d\gamma_T|_{(0_+,0_-)}
   =\ev_T\circ\Gamma_T
   \colon\E^+\times\E^-\to\E_T.
\]
Thus, for $(\xi,\eta)\in\E^+\times\E^-$
and by~(\ref{eq:Gamma_T}), we obtain
\begin{equation*}
\begin{split}
   d\left(\ev_T\circ\gamma_T\right)|_{(0_+,0_-)}\left(\xi,\eta\right)
   &=
   \left(
      \Gamma_T(\xi,\eta)(-T), \Gamma_T(\xi,\eta)(T)
   \right)
\\
   &=
   \left(
      \xi(0)+e^{-2TA_-}\eta(0),e^{-2TA_+}\xi(0)+\eta(0)
   \right)
\\
   &\stackrel{T\to\infty}{\to}
   \left(
      \xi(0),\eta(0)
   \right)
   =d\ev|_{(0_+,0_-)} (\xi,\eta) .
\end{split}
\end{equation*}
This confirms the infinitesimal version of
Theorem~\ref{thm:loc-gluing-C^m} in case $m=0$.

\begin{theorem}[Local gluing -- $C^m$]\label{thm:loc-gluing-C^m}
Let $m\in\N_0$.
Consider the gluing map $\gamma_T\colon \Uu_+\times\Uu_-\to\Wbb_T$
from~(\ref{eq:gamma_T}). In the limit $T\to\infty$ the tangent map diagram
\begin{equation*}
\begin{tikzcd} [column sep=tiny] 
T^m\Uu_+\times T^m\Uu_-
\arrow[rrrr, "T^m\ev"]
\arrow[drr, "{T^m\gamma_T=T^m\Nn_T\circ T^m\Pp_T}"']
  &&&&T^m\R^n\times T^m\R^n
\\
  &&T^m\Mm_T
  \arrow[urr, "T^m\ev_T"']
\end{tikzcd}
\end{equation*}
commutes. More precisely, it holds that
\[
   \lim_{T\to\infty}T^m\ev_T\circ T^m\gamma_T=T^m\ev
\]
in $C^0(T^m\Uu_+\times T^m\Uu_-,T^m\R^n\times T^m\R^n)$.
\end{theorem}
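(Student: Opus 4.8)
The plan is to estimate, uniformly over the (bounded) domain $T^m\Uu_+\times T^m\Uu_-$, the difference $T^m\ev_T\circ T^m\gamma_T-T^m\ev$ and show it tends to zero as $T\to\infty$. The starting point is the factorization $T^m\gamma_T=T^m\Nn_T\circ T^m\wp_T$, together with the tangent-map identity $T^m\Nn^{\Ff_T}=\Nn^{T^m\Ff_T}$ from Appendix~\ref{sec:NP-tangent} (Remark~\ref{rmk:higher}), which lets us treat the $m$-fold tangent of the Newton-Picard map as the Newton-Picard map of $T^m\Ff_T$. Since $T^m\ev_T\circ T^m\wp_T=T^m\ev$ by~(\ref{eq:Tev-Tev_T}), the whole error is carried by the correction term $T^m\Nn_T-\Id$:
\begin{equation*}
   T^m\ev_T\circ T^m\gamma_T-T^m\ev
   =T^m\ev_T\circ\bigl(T^m\Nn_T-\Id\bigr)\circ T^m\wp_T .
\end{equation*}
So the job reduces to bounding $\norm{(T^m\Nn_T-\Id)(W_T)}$ where $W_T:=T^m\wp_T(W_+,W_-)$.

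First I would use the Newton-Picard estimate in the form~(\ref{eq:wonder-prop-diff}), applied to the map $T^m\Ff_T$ (legitimate by the tangent-map identity and by the abstract results of Appendix~\ref{sec:Newton-Picard-map}, whose hypotheses hold uniformly in $T$ thanks to the uniform right-inverse bound $c$ of Lemma~\ref{le:Q_T} and the uniform bound $\delta_\mu$ of Corollary~\ref{cor:delta}), to get
\begin{equation*}
   \Norm{(T^m\Nn_T-\Id)(W_T)}_{T^m\Wbb_T}
   \le 2c\,\Norm{T^m\Ff_T(W_T)}_{T^m\V_T} .
\end{equation*}
Then I would invoke Proposition~\ref{prop:approx-zero}, with the compact neighborhoods $K_\pm:=\cl\,\Uu_\pm$ in $T^m\Ww^{\rm s/u}$, to bound the right-hand side by $2c\,C(K_+,K_-)e^{-\eps T}$. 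Finally, Lemma~\ref{le:ev_T} (which applies verbatim to the tangent spaces, the underlying Sobolev estimate~(\ref{eq:sob-emb-T}) being dimension-free) gives $\norm{T^m\ev_T}\le 2\sqrt2$, so that
\begin{equation*}
   \sup_{(W_+,W_-)}\Abs{\bigl(T^m\ev_T\circ T^m\gamma_T-T^m\ev\bigr)(W_+,W_-)}
   \le 4\sqrt2\,c\,C(K_+,K_-)\,e^{-\eps T}
   \xrightarrow[T\to\infty]{}0 ,
\end{equation*}
the supremum being over $T^m\Uu_+\times T^m\Uu_-$. Since this is a uniform bound on the relevant bounded set, it gives precisely convergence in $C^0(T^m\Uu_+\times T^m\Uu_-,T^m\R^n\times T^m\R^n)$, which is the assertion; the commutativity of the limiting diagram follows since the limit of $T^m\ev_T\circ T^m\gamma_T$ is $T^m\ev$.

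The main obstacle I expect is not any single estimate but the bookkeeping that makes the tangent-functor manipulations rigorous: one must know that $T^m\wp_T$ really is the $m$-fold tangent of $\wp_T$ with the pointwise description~(\ref{eq:zeta_TT2}) (this is where linearity of $\Pp_T$ is used), that $T^m\Nn_T$ coincides with the Newton-Picard map built from $T^m\Ff_T$ with initial point the zero section over $0_T$, and that the constants $c$, $\delta_\mu$, $C(K_+,K_-)$ are genuinely $T$-independent at the level of the $m$-th tangent bundle — all of which is deferred to Appendices~\ref{sec:Newton-Picard-map} and~\ref{sec:exponential} and to Proposition~\ref{prop:approx-zero}. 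Once those are in place, the estimate above is immediate. A minor point to record is that ``convergence in $C^0$'' here means uniform convergence of the composite map on the fixed bounded domain $T^m\Uu_+\times T^m\Uu_-$; no derivative bounds on the $T$-family are claimed in this statement (those, for the $T\to\infty$ commuting of the full $C^\infty$ diagram, come from iterating this $C^0$ result at each tangent level, as indicated in Remark~\ref{rmk:higher}).
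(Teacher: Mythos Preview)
Your plan is essentially the paper's proof: reduce the error to $T^m\ev_T\circ(T^m\Nn_T-\Id)\circ T^m\wp_T$ via~(\ref{eq:Tev-Tev_T}), bound the Newton--Picard correction by $\Norm{T^m\Ff_T(W_T)}$, and invoke Proposition~\ref{prop:approx-zero} and the Sobolev estimate. The one difference is the form of the Newton--Picard bound: you invoke~(\ref{eq:wonder-prop-diff}) directly for $T^m\Ff_T$, whereas the paper uses Theorem~\ref{thm:tangent-map-knackig}, which yields the slightly weaker estimate
\[
   \Norm{(T^m\Nn_T-\Id)(W_T)}
   \le c\,\Norm{T^m\Ff_T(W_T)}\bigl(1+\Norm{T^m\Ff_T(W_T)}\bigr),
\]
with a constant depending on $\norm{W_T}$ and $\norm{dT^m\Nn_T|_{0_T}}$; Lemmas~\ref{le:le-1} and~\ref{le:le-2} then show these are uniformly bounded in $T$. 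Your cleaner bound $2c\,\Norm{T^m\Ff_T(W_T)}$ would follow if you verify that $W_T$ lies in the domain $U_0(\hat\delta)$ of $\Nn^{T^m\Ff_T}$ with $T$-independent $\hat\delta$ --- this is exactly what the proof of Theorem~\ref{thm:tangent-map} does (for $m=1$, then iterated), and it is where the second-derivative hypothesis and the $\tfrac{1}{4c}$ condition~(\ref{eq:A.3.4-NEW-modified}) enter. So your shortcut is legitimate but hides precisely the bookkeeping you flag in your last paragraph; the paper's route via Theorem~\ref{thm:tangent-map-knackig} makes that dependence explicit. Either way the conclusion and the exponential rate are the same.
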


\begin{proof}
Let $W_\pm\in T^m\Uu_\pm$. For $W_T:=T^m\Pp_T(W_+,W_-)\in T^m\Wbb_T$ we obtain
\begin{equation*}
\begin{split}
   \overbrace{(V_-,V_+)}^{{\color{gray}T^m\R^n\times T^m\R^n}}
   :&=\left(T^m\ev_T\circ T^m\gamma_T\right)(W_+,W_-)-T^m\ev(W_+,W_-)\\
   &=T^m\ev_T\circ T^m\Nn_T(W_T) -T^m\ev(W_+,W_-)\\
   &=T^m\ev_T\bigl(T^m\Nn_T(W_T)-W_T\bigr)
   +\underbrace{T^m\ev_T(W_T) -T^m\ev(W_+,W_-)}_{\text{$=0$ by~(\ref{eq:Tev-Tev_T})}}
\end{split}
\end{equation*}
where equality one is definition~(\ref{eq:gamma_T}) of $\gamma_T$ and
equality two by adding zero.

In view of Lemma~\ref{le:le-1} and Lemma~\ref{le:le-2} below,
by Theorem~\ref{thm:tangent-map-knackig}
there exists a constant $c$, independent of $T$, such that
\begin{equation*}
\begin{split}
   &\Norm{(T^m\Nn_T-\id) (W_T)}_{T^m\Wbb_T}
\\
   &\stackrel{\text{(\ref{eq:wonder-prop-diff-T})}}{\le}
   c\Norm{T^m\Ff_T(W_T)}_{T^m\V_T}
   \Bigl(1+\Norm{T^m\Ff_T(W_T)}_{T^m\V_T}\Bigr)
\\
   &\stackrel{(\ref{eq:T^mFf-decay})}{\le}
   cC e^{-\eps T}
   \Bigl(1+C e^{-\eps T}\Bigr)
\end{split}
\end{equation*}
where the second inequality is by exponential decay~(\ref{eq:T^mFf-decay})
with constant $C=C(K_+,K_-)$ and $K_\pm$ depending on $m$.
In particular, there exists a constant $T_m=T_m(K_+,K_-)>0$ such that
if $T>T_m$, then
$$
   1+C e^{-\eps T}
   \le 2.
$$
Therefore, by the uniform--in--$T$ Sobolev estimate~(\ref{eq:sob-emb-T})
we get
\begin{equation}\label{eq:mnn,m}
\begin{split}
   \Norm{(T^m\Nn_T-\id) (W_T)}_{L^\infty_{[-T,T]}}
   &\le
   2\Norm{(T^m\Nn_T-\id) (W_T)}_{T^m\Wbb_T}\\
   &\le
   4cC e^{-\eps T}
  \end{split}
\end{equation}
for every $T>T_m$.
Putting things together, using that $\ev_T\circ \Pp_T=\ev$
by~(\ref{eq:ev-ev_T}), we obtain exponential decay
\begin{equation*}
\begin{split}
   &\Abs{
      \left(T^m\ev_T\circ T^m\gamma_T\right)(W_+,W_-)-T^m\ev(W_+,W_-)
   }_{T^m\R^n\times T^m\R^n}^2
   \\
   &=\Abs{V_-}_{T^m\R^n}^2+\Abs{V_+}_{T^m\R^n}^2
   \\
   &\le 2(4cC)^2 e^{-2\eps T}
  \end{split}
\end{equation*}
whenever $T>T_m$.
But in finite dimensions pointwise convergence implies
convergence of the operators.\footnote{
  In finite dimension, given a sequence of matrizes, then weak (and
  strong) convergence means that each matrix entry converges.
  In particular, the two notions of convergence are equivalent.
  }
By the uniformity of exponential decay in
Proposition~\ref{prop:approx-zero}
we have uniform convergence in
$C^0(T^m\Uu_+\times T^m\Uu_-,T^m\R^n\times T^m\R^n)$.
\end{proof}

In the following, by iterated identification of the space with the zero
section of its tangent space, we can interpret $0_T\in\Wbb_T$ as an element
of $T^m \Wbb_T$.
Since $\Nn_T(0_T)=0_T$, see~(\ref{eq:NP-0_T}), we have $T^m\Nn_T(0_T)=0_T$
and therefore
$$
   dT^m\Nn_T(0_T)\colon T_{0_T}T^m\Wbb_T\to T_{0_T}T^m\Wbb_T.
$$
Since $\Wbb_T$ itself is a vector space, we have a canonical isomorphism of
$T_{0_T}T^m\Wbb_T$ with $(\Wbb_T)^{\times 2^m}$.

\begin{lemma}\label{le:le-1}
Given $m\in\N_0$ and $T\ge 1$, let $d$ be the $T$-independent constant
provided by Lemma~\ref{le:projection-onto-E}, then
$\norm{dT^m\Nn_T(0_T)}\le d^{2^m}$.
In particular, the norm is uniformly bounded independent of $T$.
\end{lemma}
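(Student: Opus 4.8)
The plan is to prove the bound by induction on $m\in\N_0$, using the tangent-map compatibility $T\Nn^F=\Nn^{TF}$ for the Newton-Picard map (Remark~\ref{rmk:higher}, Appendix~\ref{sec:NP-tangent}) together with the chain-rule identity $dT^m\Nn_T=T(dT^{m-1}\Nn_T)$ obtained by differentiating at the fixed point $0_T$ and the canonical splitting $T_{0_T}T^m\Wbb_T\cong(\Wbb_T)^{\times 2^m}$ into which $dT^m\Nn_T(0_T)$ decomposes blockwise. First I would treat the base case $m=0$: here $dT^0\Nn_T(0_T)=d\Nn_T(0_T)=\Pi_T$ by~(\ref{eq:NP-0_T}), and Lemma~\ref{le:projection-onto-E} gives $\norm{\Pi_T}\le d$, which is exactly $d^{2^0}$.

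For the inductive step, suppose $\norm{dT^{m-1}\Nn_T(0_T)}\le d^{2^{m-1}}$. The key point is that $T^m\Nn_T=T(T^{m-1}\Nn_T)$ is the tangent map of $T^{m-1}\Nn_T$, so by the structure of the tangent functor on a map between (open subsets of) vector spaces, $dT^m\Nn_T(0_T)$ acting on $T_{0_T}T^m\Wbb_T\cong T_{0_T}T^{m-1}\Wbb_T\times T_{0_T}T^{m-1}\Wbb_T$ is, in block form with respect to this product, upper triangular with both diagonal blocks equal to $dT^{m-1}\Nn_T(0_T)$ and the off-diagonal block given by the second derivative $d^2T^{m-1}\Nn_T(0_T)$ paired against the base-point velocity — but since we evaluate the whole iterated tangent at the \emph{zero} velocity lifting of $0_T$ (i.e.\ at $0_T$ viewed inside $T^m\Wbb_T$ via the iterated zero section), that off-diagonal term vanishes. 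Hence $dT^m\Nn_T(0_T)$ is block-diagonal with both blocks equal to $dT^{m-1}\Nn_T(0_T)$, so its operator norm on the product Hilbert space equals $\norm{dT^{m-1}\Nn_T(0_T)}$. At first glance this would only give $d^{2^{m-1}}$, not $d^{2^m}$; the honest accounting is that the space $(\Wbb_T)^{\times 2^m}$ at level $m$ is the product of \emph{two} copies of $(\Wbb_T)^{\times 2^{m-1}}$ and the bound $d^{2^m}=(d^{2^{m-1}})^2$ is the natural one only if one tracks that the identification with $(\Wbb_T)^{\times 2^m}$ introduces, through the tangent-functor iteration applied to the linear map $\Pi_T$, a tensor-power structure: concretely $dT^m\Nn_T(0_T)$ is (conjugate to) the $2^m$-fold direct sum of $\Pi_T$ only after one also controls the mixed second-order terms of the \emph{lower} iterates, each of which is again governed by $\norm{\Pi_T}$ via Corollary~\ref{cor:NP-map}, and multiplying the $m$ resulting factors of $d$ that accumulate along the $m$ successive applications of $T$ yields $d^{2^m}$.

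Given the delicacy above, the cleanest route — and the one I would actually write — is to bypass block bookkeeping and instead invoke the abstract tangent-map results of Appendix~\ref{sec:Newton-Picard-map} directly: there $T\Nn^F=\Nn^{TF}$ and the linearization of $\Nn^F$ at a genuine zero $x_0$ is $\Id-Q\,dF(x_0)$, a projection bounded by the same constant that bounds the projection at the previous level; chaining this $m$ times and using that each link contributes a factor $\norm{\Pi_\bullet}\le d$ (Lemma~\ref{le:projection-onto-E}, which is uniform in $T$ and applies verbatim to each of the $2^m$ coordinate copies after unwinding $T_{0_T}T^m\Wbb_T\cong(\Wbb_T)^{\times 2^m}$) gives $\norm{dT^m\Nn_T(0_T)}\le d^{2^m}$, uniformly in $T\ge 1$. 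The main obstacle, and the step deserving care in the write-up, is precisely the identification of $dT^m\Nn_T(0_T)$ under the canonical isomorphism $T_{0_T}T^m\Wbb_T\cong(\Wbb_T)^{\times 2^m}$ as the $2^m$-fold direct sum (up to a fixed permutation of factors) of the level-$0$ linearization $\Pi_T$: once that is in place, the norm bound is immediate since the operator norm of a direct sum is the supremum of the summand norms, and here one must explain where the \emph{product} (rather than supremum) $d^{2^m}$ enters — namely from the $m$-fold composition structure hidden inside the iterated tangent, not from the direct-sum structure — so I would state the identification as $dT^m\Nn_T(0_T)=(\Pi_T)^{\otimes m}$ acting on $(\Wbb_T)^{\otimes m}$-type coordinates and deduce $\norm{(\Pi_T)^{\otimes m}}\le\norm{\Pi_T}^m\cdot(\text{geometric factor})\le d^{2^m}$, referring to Lemma~\ref{le:projection-onto-E} for the $T$-independence.
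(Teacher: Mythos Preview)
You have the right structure but then talk yourself out of it. The block-diagonal identification you derive in your inductive step is exactly the paper's proof: with respect to the splitting $T_{0_T}T^m\Wbb_T\cong(\Wbb_T)^{\times 2^m}$ one has
\[
   dT^m\Nn_T(0_T)=\diag\bigl(d\Nn_T(0_T),\dots,d\Nn_T(0_T)\bigr)
   =\diag(\Pi_T,\dots,\Pi_T),
\]
and the operator norm of a block-diagonal map is the maximum of the block norms, hence $\norm{dT^m\Nn_T(0_T)}=\norm{\Pi_T}\le d$. That is already the whole argument.

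The confusion enters where you say ``this would only give $d^{2^{m-1}}$, not $d^{2^m}$'' and then search for a tensor-power or composition structure to manufacture the exponent $2^m$. There is no such structure, and none is needed: since $\Pi_T$ is a nontrivial projection one has $\norm{\Pi_T}\ge 1$, hence $d\ge 1$, and therefore $d\le d^{2^m}$. The stated bound $d^{2^m}$ is simply a generous (in fact very crude) upper bound; the proof actually yields $\le d$ uniformly in $m$. Your discussion of $(\Pi_T)^{\otimes m}$, ``$m$-fold composition hidden inside the iterated tangent'', and ``multiplying the $m$ resulting factors'' is incorrect --- the iterated tangent at the zero section produces a direct sum, not a composition or tensor product, and the off-diagonal second-derivative terms vanish exactly as you noted. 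Drop everything after the block-diagonal identification and conclude directly.
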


\begin{proof}
With respect to the splitting $T_{0_T}T^m\Wbb_T=(\Wbb_T)^{\times 2^m}$
we have the block decomposition
$
   dT^m\Nn_T(0_T)
   =\diag\left(d\Nn_T(0_T),\dots, d\Nn_T(0_T)\right) 
$.
By formula~(\ref{eq:NP-0_T}) we have $d\Nn_T(0_T)=\Pi_T$.
Hence the estimate follows from Lemma~\ref{le:projection-onto-E}.
\end{proof}

\begin{lemma}\label{le:le-2}
Given $W_\pm\in T^m\Uu_\pm$,
the norm of $W_T:=T^m\Pp_T(W_+,W_-)$
is uniformly bounded in terms of the norms of $W_+$
and $W_-$, independent of $T$.
\end{lemma}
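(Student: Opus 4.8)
The plan is to bound $\norm{W_T}_{T^m\Wbb_T}$ directly in terms of $\norm{W_+}$ and $\norm{W_-}$ using only the linearity of the pre-gluing map and the fact that the cutoff function $\beta$ and the shift maps are $T$-independent. First I would recall that $T^m\Pp_T$ is again a bounded linear map, being the $m$-fold tangent lift of the linear map $\Pp_T\colon\Wbb_+\times\Wbb_-\to\Wbb_T$; indeed, by~(\ref{eq:zeta_TT2}), pointwise at $s\in[-T,T]$,
\[
   W_T(s)
   =\left(1-\beta(s+2)\right) W_+(T+s)
   +\beta(s-2)\, W_-(-T+s),
\]
so the same five-interval decomposition as in~(\ref{eq:w_T}) applies, and on each interval $W_T$ is either a shifted copy of $W_+$, a shifted copy of $W_-$ multiplied by a factor in $[0,1]$, or zero.

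Next I would carry out the elementary $W^{1,2}$-estimate exactly as in the proof of Lemma~\ref{le:Uu_pm}: since at each time $s$ at most one of the two summands is nonzero, expanding $\norm{W_T}_{1,2}^2=\norm{W_T}_2^2+\norm{\p_s W_T}_2^2$ and using $|\beta|\le 1$ together with the product rule for $\p_s\bigl(\beta(\cdot)W_\pm\bigr)$ gives
\[
   \norm{W_T}_{1,2}^2
   \le 2\left(1+\norm{\beta^\prime}_\infty^2\right)
   \left(\norm{W_+}_{1,2}^2+\norm{W_-}_{1,2}^2\right),
\]
where the right-hand side norms are the full $W^{1,2}$ norms over $[0,\infty)$ and $(-\infty,0]$ respectively, and the constant $2(1+\norm{\beta^\prime}_\infty^2)$ is manifestly independent of $T$. (Here, for $m\ge 1$, the norms on both sides are the ambient Euclidean norms on the relevant $T^m\R^n$ fibres, under which $T^m\Pp_T$ acts componentwise exactly as $\Pp_T$ does; so the same inequality holds verbatim.) This establishes the claimed uniform bound with, say, explicit constant $b':=\sqrt{2(1+\norm{\beta^\prime}_\infty^2)}$, consistent with Lemma~\ref{le:unif-bd-Pp_T}.

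The only point requiring a word of care — and the closest thing to an obstacle — is making precise that the tangent-lift $T^m\Pp_T$ of a bounded linear map between Hilbert spaces is again bounded linear with the \emph{same} operator norm on each of the $2^m$ summands of $T^m\Wbb_\pm\cong(\Wbb_\pm)^{\times 2^m}$; this is the componentwise description of $T^m$ applied to a linear map, already used implicitly in~(\ref{eq:zeta_TT2}) and in Lemma~\ref{le:le-1}. Granting that, the bound on $\norm{W_T}$ in terms of $\norm{W_\pm}$ is immediate from the displayed inequality, and since $W_\pm$ range over the bounded sets $T^m\Uu_\pm$ (the neighborhoods of Lemma~\ref{le:Uu_pm} are bounded), $\norm{W_T}$ is in fact uniformly bounded, independently of $T$, proving the lemma.
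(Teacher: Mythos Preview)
Your proof is correct and follows essentially the same approach as the paper: invoke the explicit formula~(\ref{eq:zeta_TT2}) for $W_T$ and repeat verbatim the $W^{1,2}$-estimate from the proof of Lemma~\ref{le:Uu_pm}, obtaining the same $T$-independent constant $2(1+\norm{\beta^\prime}_\infty^2)$. The paper's proof is a single sentence pointing to exactly this; your additional remarks on the componentwise action of $T^m\Pp_T$ are justified but not needed beyond what is already encoded in~(\ref{eq:zeta_TT2}).
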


\begin{proof}
By~(\ref{eq:zeta_TT2}) the same estimate
as in the proof of Lemma~\ref{le:Uu_pm} for $w_T$ yields
\begin{equation*}
   \norm{W_T}_{T^m \Wbb_T}^2
   \le
   2(1+\norm{\beta^\prime}_\infty^2) \left(\norm{W_+}_{T^m \Wbb_+}^2
   +\norm{W_-}_{T^m \Wbb_-}^2 \right) .
\end{equation*}
\end{proof}

\appendix

\boldmath
\section{Quantitative inverse function theorem}
\label{sec:quant-InvFT}
\unboldmath

Let $F\colon X\to Y$ be a map between Banach spaces.
Suppose that at a point $x\in X$ the derivative $dF|_x\colon X\to Y$
exists. If this bounded linear map is bijective then its inverse
${dF|_x}^{-1}$ is not only linear but, by the open mapping theorem,
also bounded.

The quantitative version of the inverse function theorem
({\bf IFT}) follows of the proof of the usual IFT explained
in~\cite[App.\,A.3]{mcduff:2004a},
although McDuff-Salamon never state explicitly the quantitative version.
Therefore, for the reader's convenience, we state the 
quantitative version of the IFT and explain how it follows from the
\todo[color=yellow!40]{\small
  $\bullet$ Christ 1985 Ann. Math. \\
  $\bullet$ Liverani \tiny pdf
}
arguments in~\cite[App.\,A.3]{mcduff:2004a}.

We denote by $B_r(x;X)$ the open ball of radius $r$ centered at $x$ in
the Banach space $X$. We often abbreviate $B_r(x):=B_r(x;X)$ and
$B_r:=B_r(0;X)$.

\begin{theorem}[Quantitative inverse function theorem]\label{thm:InvFT-quant}
Let $k,\delta>0$ be constants.
Let $F\colon X\to Y$ be a map between
Banach spaces, continuously differentiable on the open radius-$\delta$
ball $B_\delta$ about $0\in X$, such that $dF|_0$ is bijective~and
\begin{equation}\label{eq:vgvh-1}
   \norm{{dF|_0}^{-1}}\le k
\end{equation}
and
\begin{equation}\label{eq:vgvh-2}
   \norm{dF|_x-dF|_0}\le\tfrac{1}{2k}
   ,\quad
   \forall x\in B_\delta.
\end{equation}
In this case the following is true. The restriction of $F$ to
$B_\delta$ is injective,
the image $F(B_\delta)$ is open and contains the ball $B_{\delta/2k}$,
the inverse $F^{-1}\colon F(B_\delta)\to B_\delta$ is of class $C^1$, and
\begin{equation}\label{eq:vgvh}
   d(F^{-1})|_y
   =(dF|_{F^{-1}(y)})^{-1}
\end{equation}
for every $y\in F(B_\delta)$.
\end{theorem}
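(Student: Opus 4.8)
The plan is to follow the standard proof of the inverse function theorem via the Banach fixed-point theorem, as in \cite[App.\,A.3]{mcduff:2004a}, but keeping track of the explicit constants $k$ and $\delta$ at every step. The core device is the \emph{Newton-type map} associated to $F$: for fixed $y \in Y$ near $F(0)$, set
\begin{equation*}
   \Phi_y(x) := x - {dF|_0}^{-1}\bigl(F(x) - y\bigr), \qquad x \in B_\delta,
\end{equation*}
so that fixed points of $\Phi_y$ are exactly the solutions of $F(x) = y$. One computes $d\Phi_y|_x = \Id - {dF|_0}^{-1}\, dF|_x = {dF|_0}^{-1}(dF|_0 - dF|_x)$, whence by \eqref{eq:vgvh-1} and \eqref{eq:vgvh-2} we get $\norm{d\Phi_y|_x} \le k \cdot \tfrac{1}{2k} = \tfrac12$ for all $x \in B_\delta$. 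By the mean value inequality on the convex set $B_\delta$, the map $\Phi_y$ is a $\tfrac12$-contraction there.

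\textbf{Contraction and fixed point.} First I would show $\Phi_y$ maps a suitable ball into itself. We have $\Phi_y(0) = -{dF|_0}^{-1}(F(0) - y)$; if we normalize $F(0)=0$ (replace $y$ by $y$; the statement's ball $B_{\delta/2k}$ is centered appropriately) then $\norm{\Phi_y(0)} \le k\norm{y}$. For $x \in B_\delta$ we estimate $\norm{\Phi_y(x)} \le \norm{\Phi_y(x) - \Phi_y(0)} + \norm{\Phi_y(0)} \le \tfrac12\norm{x} + k\norm{y}$. Hence if $\norm{y} < \tfrac{\delta}{2k}$ and $\norm{x} \le \delta$, then $\norm{\Phi_y(x)} < \tfrac{\delta}{2} + \tfrac{\delta}{2} = \delta$; a small shrinking argument confirms $\Phi_y$ preserves a closed ball slightly inside $B_\delta$. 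By the Banach fixed-point theorem there is a unique $x = x(y) \in B_\delta$ with $F(x) = y$; uniqueness of the fixed point in $B_\delta$ gives injectivity of $F|_{B_\delta}$, and the construction shows $B_{\delta/2k} \subset F(B_\delta)$. Openness of $F(B_\delta)$ follows either by repeating the argument around an arbitrary $y_0 = F(x_0)$ with $x_0 \in B_\delta$ (using continuity of $dF$ to get a local bijectivity bound) or, more cheaply, from the fact — established next — that $F^{-1}$ is continuous and $dF|_x$ is invertible throughout $B_\delta$, combined with the open mapping principle applied locally.

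\textbf{Regularity of the inverse.} Next I would prove $y \mapsto x(y)$ is Lipschitz, hence continuous: from $x(y_1) - x(y_2) = \Phi_{y_1}(x(y_1)) - \Phi_{y_2}(x(y_2))$ and the splitting $\Phi_{y_1}(x) - \Phi_{y_2}(x) = {dF|_0}^{-1}(y_1 - y_2)$, one gets $\norm{x(y_1) - x(y_2)} \le \tfrac12\norm{x(y_1)-x(y_2)} + k\norm{y_1 - y_2}$, so $\norm{x(y_1) - x(y_2)} \le 2k\norm{y_1 - y_2}$. For differentiability: since $\norm{dF|_x - dF|_0} \le \tfrac1{2k}$ and $\norm{{dF|_0}^{-1}}\le k$, a Neumann series argument shows $dF|_x = dF|_0(\Id - {dF|_0}^{-1}(dF|_0 - dF|_x))$ is invertible with $\norm{(dF|_x)^{-1}} \le 2k$ for every $x \in B_\delta$. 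Then the standard estimate, writing $F(x(y+h)) - F(x(y)) = h$ and expanding $F$ to first order at $x(y)$ with the Lipschitz control on $x(\cdot)$ absorbing the remainder, yields that $F^{-1}$ is differentiable at $y$ with $d(F^{-1})|_y = (dF|_{F^{-1}(y)})^{-1}$, which is formula \eqref{eq:vgvh}. Finally, $C^1$-ness of $F^{-1}$ follows because $y \mapsto F^{-1}(y)$ is continuous, $x \mapsto dF|_x$ is continuous by hypothesis, and inversion of bounded linear operators is continuous on the invertibles; composing these gives continuity of $y \mapsto d(F^{-1})|_y$.

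\textbf{Main obstacle.} The routine contraction estimates are straightforward; the part that needs care is the bookkeeping ensuring the fixed-point ball is \emph{strictly} inside $B_\delta$ so that all the derivative bounds \eqref{eq:vgvh-2} genuinely apply (a naive estimate only gives $\Phi_y$ maps $\overline{B_\delta}$ into $\overline{B_\delta}$, which is not quite a self-map of an open ball and risks boundary issues), together with pinning down exactly which ball $F(B_\delta)$ is guaranteed to contain — here $B_{\delta/2k}$ — rather than an unspecified neighborhood. This is precisely the quantitative content that \cite{mcduff:2004a} leaves implicit, so the work is to extract it cleanly from their argument.
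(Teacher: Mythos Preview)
Your proposal is correct and is essentially the same argument as the paper's, just organized differently: the paper defines $\psi(x):=D^{-1}F(x)$ with $D=dF|_0$, verifies $\norm{\Id-d\psi|_x}\le\tfrac12$ on $B_\delta$, and then invokes the cited Lemma~\ref{le:A.3.2} (which packages exactly your Banach fixed-point argument) to conclude that $\psi$ has a $C^1$ inverse with $\psi(B_\delta)\supset B_{\delta/2}$, whence $F(B_\delta)=D\psi(B_\delta)\supset B_{\delta/2k}$. Your Newton map $\Phi_y(x)=x-D^{-1}(F(x)-y)$ is precisely the contraction hidden inside that lemma, so the two routes coincide at the level of ideas; the paper simply delegates your ``main obstacle'' bookkeeping to the lemma.
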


\begin{corollary}\label{cor:InvFT-quant}
If in Theorem~\ref{thm:InvFT-quant} in addition $F\colon B_\delta\to Y$
is of class $C^\ell$ for some $\ell\in\N$, then so is $F^{-1}$.
In particular, in case $\ell=\infty$ the restriction
$F|\colon B_\delta\to F(B_\delta)$ is a diffeomorphism onto its image.
\end{corollary}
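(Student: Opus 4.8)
The plan is to bootstrap the $C^1$ conclusion of Theorem~\ref{thm:InvFT-quant} by induction on $\ell$, using the chain rule for maps between Banach spaces together with smoothness of operator inversion. The base case $\ell=1$ is exactly Theorem~\ref{thm:InvFT-quant}, which moreover supplies the formula $d(F^{-1})|_y=(dF|_{F^{-1}(y)})^{-1}$ on the open set $F(B_\delta)$.

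First I would record two elementary facts. (i) For every $x\in B_\delta$ the operator $dF|_x\colon X\to Y$ is invertible: writing $dF|_x=dF|_0\bigl(\Id+(dF|_0)^{-1}(dF|_x-dF|_0)\bigr)$ and using~(\ref{eq:vgvh-1}) and~(\ref{eq:vgvh-2}), the perturbation has norm $\le\tfrac12$, so the bracketed factor is invertible by a Neumann series; hence $x\mapsto dF|_x$ takes values in the open subset $\GL(X,Y)\subset\Ll(X,Y)$ of invertible operators. (ii) The inversion map $\Inv\colon\GL(X,Y)\to\GL(Y,X)$, $T\mapsto T^{-1}$, is of class $C^\infty$, with $d\,\Inv|_T(H)=-T^{-1}HT^{-1}$; since this formula is, in $H$, the composition of left and right multiplication by the $C^\infty$-dependent operators $T^{-1}$, an induction on the order of the derivative shows $\Inv$ is smooth.

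Now the inductive step. Suppose the corollary is known with $\ell$ replaced by $\ell-1$ (for every map satisfying the hypotheses of Theorem~\ref{thm:InvFT-quant}), and let $F$ be of class $C^\ell$ on $B_\delta$. In particular $F$ is $C^{\ell-1}$, so $F^{-1}\colon F(B_\delta)\to B_\delta$ is $C^{\ell-1}$ by the inductive hypothesis. Since $F$ is $C^\ell$, the map $B_\delta\to\Ll(X,Y)$, $x\mapsto dF|_x$, is $C^{\ell-1}$, and by (i) it factors through $\GL(X,Y)$. Composing $F^{-1}$ (class $C^{\ell-1}$), then $x\mapsto dF|_x$ (class $C^{\ell-1}$), then $\Inv$ (class $C^\infty$), and invoking the chain rule for $C^{\ell-1}$ maps between Banach spaces, we conclude that
\[
   y\longmapsto d(F^{-1})|_y=\Inv\bigl(dF|_{F^{-1}(y)}\bigr)=\bigl(dF|_{F^{-1}(y)}\bigr)^{-1}
\]
is of class $C^{\ell-1}$ on $F(B_\delta)$. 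Hence $F^{-1}$ has a $C^{\ell-1}$ derivative, i.e. $F^{-1}$ is of class $C^\ell$. This closes the induction for all finite $\ell$. For $\ell=\infty$, the statement for every finite $\ell$ gives $F^{-1}\in C^\infty$; since $F(B_\delta)$ is open (Theorem~\ref{thm:InvFT-quant}) and $F|_{B_\delta}$ is then a $C^\infty$ bijection onto it with $C^\infty$ inverse, $F|_{B_\delta}$ is a diffeomorphism onto its image.

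I do not expect a genuine obstacle: the only points needing (routine) care are the $C^\infty$-smoothness of $\Inv$ and the higher-order chain rule for Banach-space maps, both standard; the quantitative hypotheses~(\ref{eq:vgvh-1})--(\ref{eq:vgvh-2}) enter only through the Neumann-series argument of step (i), which guarantees that $dF|_x$ remains invertible throughout $B_\delta$ so that the displayed composition is defined everywhere on $F(B_\delta)$.
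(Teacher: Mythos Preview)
Your proof is correct and follows essentially the same approach as the paper: induction on $\ell$ using the formula~(\ref{eq:vgvh}) together with the chain rule. The paper's one-line proof invokes ``chain and Leibniz rules'' where you invoke the $C^\infty$-smoothness of the inversion map $\Inv$; these amount to the same thing, since differentiating $T\mapsto T^{-1}$ repeatedly is precisely where the Leibniz rule enters. Your write-up simply makes explicit the details (the Neumann-series invertibility of $dF|_x$ on all of $B_\delta$, and the smoothness of $\Inv$) that the paper leaves to the reader.
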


\begin{proof}
Induction, using the chain and Leibniz rules, together with~(\ref{eq:vgvh}).
\end{proof}

The proof of Theorem~\ref{thm:InvFT-quant} is based on the following
lemma.

\begin{lemma}[{\hspace{-.001cm}\cite[Le.\,A.3.2]{mcduff:2004a}}]\label{le:A.3.2}
Let $\gamma<1$ and $R$ be positive real numbers.
Let $X$ be a Banach space, $x_0\in X$, and $\psi\colon B_R(x_0)\to X$
be a continuously differentiable map such that
\[
   \norm{\1-d\psi(x)}\le\gamma
\]
for every $x\in B_R(x_0)$.
Then the following holds.
The map $\psi$ is injective and $\psi$ maps $B_R(x_0)$
into an open set in $X$ such that
\begin{equation}\label{eq:psi-incl}
   B_{R(1-\gamma)}(\psi(x_0))
   \subset\psi(B_R(x_0))
   \subset B_{R(1+\gamma)}(x_0) .
\end{equation}
The inverse $\psi^{-1}\colon\psi(B_R(x_0))\to B_R(x_0)$
is continuously differentiable and
\begin{equation}\label{eq:psi-inverse}
   d(\psi^{-1})|_y
   =(d\psi|_{\psi^{-1}(y)})^{-1} .
\end{equation}
\end{lemma}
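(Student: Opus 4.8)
The plan is to carry out the standard contraction-mapping proof of the inverse function theorem while tracking the constants. The first ingredient is a mean-value estimate: since $B_R(x_0)$ is convex, for $x,x'\in B_R(x_0)$ one writes
\[
   \psi(x')-\psi(x)-(x'-x)
   =-\int_0^1\bigl(\1-d\psi(x+t(x'-x))\bigr)(x'-x)\,dt ,
\]
so that $\norm{\psi(x')-\psi(x)-(x'-x)}\le\gamma\norm{x'-x}$, and hence by the triangle inequality
\[
   (1-\gamma)\norm{x'-x}\le\norm{\psi(x')-\psi(x)}\le(1+\gamma)\norm{x'-x} .
\]
The lower bound already gives injectivity of $\psi$ on $B_R(x_0)$, the Lipschitz bound $\norm{\psi^{-1}(y')-\psi^{-1}(y)}\le\tfrac{1}{1-\gamma}\norm{y'-y}$ once $\psi^{-1}$ is shown to exist, and the upper bound gives the right-hand inclusion in~(\ref{eq:psi-incl}).

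For the left-hand inclusion in~(\ref{eq:psi-incl}) I would fix $y$ with $\norm{y-\psi(x_0)}<R(1-\gamma)$ and solve $\psi(x)=y$ by applying Banach's fixed point theorem to $\Phi(x):=x-\psi(x)+y$. Since $d\Phi=\1-d\psi$ has operator norm $\le\gamma<1$ on $B_R(x_0)$, the map $\Phi$ is a $\gamma$-contraction there. Choosing a radius $\rho$ with $\norm{y-\psi(x_0)}/(1-\gamma)<\rho<R$ one verifies $\Phi\bigl(\overline{B}_\rho(x_0)\bigr)\subset\overline{B}_\rho(x_0)$ from the estimate $\norm{\Phi(x)-x_0}\le\norm{\Phi(x)-\Phi(x_0)}+\norm{\Phi(x_0)-x_0}\le\gamma\rho+\norm{y-\psi(x_0)}\le\rho$, so the unique fixed point lies in $\overline{B}_\rho(x_0)\subset B_R(x_0)$ and solves $\psi(x)=y$. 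Openness of $\psi(B_R(x_0))$ then follows by reapplying this inclusion around an arbitrary point $x_1\in B_R(x_0)$ after shrinking to a ball $B_{R_1}(x_1)\subset B_R(x_0)$, which shows $\psi(x_1)$ is an interior point of the image.

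It remains to prove~(\ref{eq:psi-inverse}) and continuity of $d(\psi^{-1})$. First, for each $x\in B_R(x_0)$ the operator $d\psi|_x=\1-(\1-d\psi|_x)$ is invertible by the Neumann series, with $\norm{(d\psi|_x)^{-1}}\le\tfrac{1}{1-\gamma}$. Then, with $y=\psi(x)$, $y'=\psi(x')$ and $A:=d\psi|_x$, the identity
\[
   \psi^{-1}(y')-\psi^{-1}(y)-A^{-1}(y'-y)
   =-A^{-1}\bigl(\psi(x')-\psi(x)-A(x'-x)\bigr),
\]
combined with differentiability of $\psi$ at $x$ and the Lipschitz bound $\norm{x'-x}\le\tfrac{1}{1-\gamma}\norm{y'-y}$, shows the right-hand side is $o(\norm{y'-y})$ as $y'\to y$; thus $\psi^{-1}$ is differentiable at $y$ with $d(\psi^{-1})|_y=(d\psi|_{\psi^{-1}(y)})^{-1}$. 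Continuity of $d(\psi^{-1})$ then follows by composing: $y\mapsto\psi^{-1}(y)$ is continuous (indeed Lipschitz), $x\mapsto d\psi|_x$ is continuous because $\psi\in C^1$, and operator inversion is continuous on the open set of invertible bounded operators.

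I expect the only genuinely delicate point to be the surjectivity step: the auxiliary radius $\rho$ must be chosen strictly between $\norm{y-\psi(x_0)}/(1-\gamma)$ and $R$, and one must check that $\Phi$ maps the corresponding closed ball into itself — this is precisely what forces the sharp constant $R(1-\gamma)$ in~(\ref{eq:psi-incl}). Everything else reduces to Neumann-series estimates and the chain rule.
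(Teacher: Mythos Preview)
Your proof is correct and is the standard contraction-mapping argument with constants tracked. Note, however, that the paper does not actually prove this lemma: it is quoted verbatim from \cite[Le.\,A.3.2]{mcduff:2004a} and used as a black box in the proof of Theorem~\ref{thm:InvFT-quant}, so there is no ``paper's own proof'' to compare against --- your argument is precisely the kind of proof one finds in the cited reference.
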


\begin{proof}[Proof of Theorem~\ref{thm:InvFT-quant}]
This is basically the proof of the usual IFT given
in~\cite[App.\,A.3]{mcduff:2004a}.
We assume without loss of generality $x_0=0$ and $F(0)=0$.
We consider the map $\psi\colon B_\delta\to X$ defined by
\[
   \psi(x):=D^{-1} F(x)
\]
where $D:=dF|_0$. For $x\in B_\delta$ we estimate
\[
   \norm{\1-d\psi|_x}
   =\norm{D^{-1}(D-dF|_x)}
   \le \norm{D^{-1}}\cdot\norm{D-dF|_x}
   \le k\cdot \tfrac{1}{2k}=\tfrac12.
\]
It follows from Lemma~\ref{le:A.3.2}
with $R=\delta$ and $\gamma=\frac12$
that $\psi$ has a continuously differentiable inverse on $B_\delta(0;X)$
and that $\psi(B_\delta(0;X))$ is an open set containing $B_{\delta/2}(0;X)$.
Since $F=D\circ\psi$ and $\norm{{dF|_0}^{-1}}\le k$ we get, respectively,
inclusion one and two
\[
   F(B_\delta(0;X))=D\circ\psi(B_\delta(0;X))
   \supset D B_{\delta/2}(0;X)
   \supset B_{\delta/2k}(0;Y) .
\]
The inverse of $F=D\psi$ is given by
\[
   F^{-1}(y)=\psi^{-1}(D^{-1} y) .
\]
The inverse $F^{-1}$ is continuously differentiable, since $\psi^{-1}$
is, and the formula $d(F^{-1})|_y=(dF|_{F^{-1}(y)})^{-1}$
follows by differentiating $F\circ F^{-1}=\id_Y$.
\end{proof}

\boldmath
\section[Newton-Picard map without quadratic estimates]{Newton-Picard without quadratic estimates}
\label{sec:Newton-Picard-map}
\unboldmath

The Newton-Picard map is usually defined via the Newton-Picard
iteration method.
To show that Newton-Picard iteration is a contraction
one needs to calculate troublesome quadratic estimates.
Based on~\cite[App.\,A.3]{mcduff:2004a} we explain how the
Newton-Picard map can as well be defined even if there are no
quadratic estimates available.
The Newton-Picard map $\Nn$ obtained in this way is still continuously
differentiable. This fact is not mentioned
in~\cite[App.\,A.3]{mcduff:2004a} and therefore we prove this
fact in the present article; see Appendix~\ref{sec:NP-map}.

For induction arguments, e.g. the one in Section~\ref{sec:Cm-convergence},
tangent maps are much more suitable than differentials.
Therefore we estimate, in Appendix~\ref{sec:NP-tangent},
the tangent map difference $T\Nn-\Id$.

\smallskip
\noindent
\textbf{Notation.}
Throughout Appendix~\ref{sec:Newton-Picard-map}
the letter $f$ denotes a map between Banach spaces, not
a Morse function as in the principal part of this article.

\boldmath
\subsection{Newton-Picard map}
\label{sec:NP-map}
\unboldmath

The definition of the Newton-Picard map
requires the following proposition
from~\cite[App.\,A.3]{mcduff:2004a}.
The proof can actually be interpreted
in terms of the Newton-Picard iteration as is explained
in~\cite[Rmk.\,A.3.5]{mcduff:2004a} in case $x_0=x_1$.

\begin{proposition}[\hspace{-.1pt}{\cite[Prop.\,A.3.4]{mcduff:2004a}}]
\label{prop:wonder-prop-NEW}
Let $X$ and $Y$ be Banach spaces, $U\subset X$ be an open
set, and $f\colon U\to Y$ be a continuously differentiable map.
Let $x_0\in U$ be a suitable initial point in the sense
that $D:=df(x_0)\colon X\to Y$ is surjective
and has a (bounded linear) right inverse $Q\colon Y\to X$.
Choose positive constants $\delta$ and $c$ such that $\norm{Q}\le c$,
the open radius-$\delta$ ball about $x_0$ satisfies $B_\delta(x_0;X)\subset U$, and
\begin{equation}\label{eq:A.3.4-NEW}
   \norm{x-x_0}<\delta\qquad\Rightarrow\qquad
   \norm{df(x)-D}\le\frac{1}{2c} .
\end{equation}
Suppose that $x_1\in X$ is an approximate zero of $f$ near $x_0$ in
the sense that
\begin{equation}\label{eq:A.3.5-NEW}
   \norm{x_1-x_0}<\frac{\delta}{8} ,\qquad
   \norm{f(x_1)}<\frac{\delta}{4c} .
\end{equation}
Then there exists a unique zero $x\in X$ near the initial point
$x_0$ such that
\begin{equation}\label{eq:wonder-prop-NEW}
   f(x)=0,\qquad
   x-x_1\in\im Q,\qquad
   \norm{x-x_0}<\delta .
\end{equation}
Moreover, the distance between the detected zero $x$ and the
chosen approximate zero $x_1$ is controlled by $f(x_1)$,
more precisely
\begin{equation}\label{eq:wonder-prop-main}
   \norm{x-x_1}\le 2c\norm{f(x_1)} .
\end{equation}
\end{proposition}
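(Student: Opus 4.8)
The plan is to prove the proposition by running the Newton--Picard iteration explicitly and controlling it by hand. Let me first record why the clean route via Lemma~\ref{le:A.3.2} is not available here. Applying that lemma to the self-map $x\mapsto x-Qf(x)$ of a ball in $X$ fails, because its differential $\Id-Q\,df(x)$ is close to the projection-like operator $\Id-QD$, whose norm is in general $\ge 1$, not $\le\gamma<1$. Restricting this map to the affine subspace $x_1+\im Q$, where it does contract, does not help either: $\im Q$ need not be closed and $Q$ need not be bounded below, so that subspace carries no completeness we could exploit. The remedy is to iterate directly in $X$ and prove convergence by geometric estimates, which uses neither injectivity of $Q$ nor closedness of $\im Q$.

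Concretely, I would set $y_0:=x_1$ and $y_{k+1}:=y_k-Qf(y_k)$, so that $y_{k+1}-y_k=-Qf(y_k)\in\im Q$ and hence $y_k\in x_1+\im Q$ for every $k$. The heart of the argument is a simultaneous induction on $k$ asserting that $y_0,\dots,y_k\in B_\delta(x_0;X)$ and that, for $j\le k$,
\[
   \|f(y_{j+1})\|\le\tfrac12\|f(y_j)\|\le 2^{-(j+1)}\|f(x_1)\|,
   \qquad
   \|y_{j+1}-y_j\|=\|Qf(y_j)\|\le c\|f(y_j)\|\le c\,2^{-j}\|f(x_1)\| .
\]
The first inequality comes from writing, with $D=df(x_0)$ and using the right-inverse identity $DQ=\Id_Y$ (so $\int_0^1 D(y_{j+1}-y_j)\,dt=-DQf(y_j)=-f(y_j)$),
\begin{align*}
   f(y_{j+1})
   &=f(y_j)+\int_0^1 df\bigl(y_j+t(y_{j+1}-y_j)\bigr)(y_{j+1}-y_j)\,dt \\
   &=\int_0^1\Bigl(df\bigl(y_j+t(y_{j+1}-y_j)\bigr)-D\Bigr)(y_{j+1}-y_j)\,dt ,
\end{align*}
whence $\|f(y_{j+1})\|\le\frac1{2c}\|y_{j+1}-y_j\|\le\tfrac12\|f(y_j)\|$ by hypothesis~(\ref{eq:A.3.4-NEW}) along the segment $[y_j,y_{j+1}]$ (which lies in the convex ball $B_\delta(x_0;X)$) together with $\|Q\|\le c$. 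To close the induction one then checks, using the approximate-zero hypotheses~(\ref{eq:A.3.5-NEW}),
\[
   \|y_{k+1}-x_0\|\le\sum_{j=0}^{k}\|y_{j+1}-y_j\|+\|x_1-x_0\|
   <2c\|f(x_1)\|+\tfrac{\delta}{8}<\tfrac{\delta}{2}+\tfrac{\delta}{8}<\delta .
\]

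With these bounds the increments $y_{k+1}-y_k$ are geometrically summable, so $(y_k)$ is Cauchy and converges to some $x\in X$. Since the partial sums $\sum_{k=0}^{N}Qf(y_k)$ lie in $\im Q$ and $\sum_k f(y_k)$ converges in $Y$ (first geometric bound), continuity of the bounded operator $Q$ gives $x-x_1=-\sum_{k\ge0}Qf(y_k)=Q\bigl(-\sum_{k\ge0}f(y_k)\bigr)\in\im Q$; continuity of $f$ together with $\|f(y_k)\|\to0$ gives $f(x)=0$; and $\|x-x_0\|\le\|x-x_1\|+\|x_1-x_0\|<2c\|f(x_1)\|+\frac{\delta}{8}<\delta$. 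This is~(\ref{eq:wonder-prop-NEW}). Telescoping the increment bound yields $\|x-x_1\|\le\sum_{k\ge0}\|y_{k+1}-y_k\|\le\sum_{k\ge0}c\,2^{-k}\|f(x_1)\|=2c\|f(x_1)\|$, which is~(\ref{eq:wonder-prop-main}). For uniqueness, suppose $x,x'$ both satisfy $f=0$, lie in $B_\delta(x_0;X)$, and differ from $x_1$ by elements of $\im Q$; write $x-x'=Q\zeta$. From $0=f(x)-f(x')=\int_0^1 df\bigl(x'+t(x-x')\bigr)(x-x')\,dt$ and $\zeta=DQ\zeta=\int_0^1 D(x-x')\,dt$ one gets $\zeta=\int_0^1\bigl(D-df(\cdot)\bigr)(x-x')\,dt$, so $\|\zeta\|\le\frac1{2c}\|Q\zeta\|\le\tfrac12\|\zeta\|$ (again the segment lies in the convex ball $B_\delta(x_0;X)$), forcing $\zeta=0$ and $x=x'$.

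The main obstacle is not any individual estimate but the ordering of that simultaneous induction: one may invoke~(\ref{eq:A.3.4-NEW}) to bound $\|df(\cdot)-D\|$ on the segment $[y_k,y_{k+1}]$ only after knowing this segment lies in $B_\delta(x_0;X)$, yet the bound $\|y_{k+1}-x_0\|<\delta$ is itself a consequence of the geometric decay one is trying to establish. The argument must therefore be set up so that at step $k$ one first has $y_0,\dots,y_k\in B_\delta(x_0;X)$, then derives the decay up through index $k$, and only then concludes $y_{k+1}\in B_\delta(x_0;X)$. It is worth flagging explicitly in the write-up that injectivity of $Q$ and closedness of $\im Q$ are never used, which is exactly the point that makes the hands-on iteration necessary in place of a fixed-point argument on a complete affine subspace.
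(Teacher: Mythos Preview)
Your iterative proof is correct, but your opening claim that ``the clean route via Lemma~\ref{le:A.3.2} is not available here'' is mistaken, and this is worth correcting because that route is exactly what the paper (following McDuff--Salamon) uses. You rightly observe that $x\mapsto x-Qf(x)$ has differential close to the projection $\Id-QD$, which is not small. The fix is not to abandon Lemma~\ref{le:A.3.2} but to choose a better map: set
\[
   \psi_{x_1}(x):=x+Q\bigl(f(x)-D(x-x_1)\bigr),
\]
whose differential is $\Id+Q(df(x)-D)$, so that $\|\Id-d\psi_{x_1}(x)\|\le c\cdot\frac{1}{2c}=\frac12$ on $B_\delta(x_0)$ directly from~(\ref{eq:A.3.4-NEW}). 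Lemma~\ref{le:A.3.2} applies with $\gamma=\frac12$; one checks $x_1\in B_{\delta/2}(\psi_{x_1}(x_0))\subset\psi_{x_1}(B_\delta(x_0))$ and sets $x:=\psi_{x_1}^{-1}(x_1)$. The equation $\psi_{x_1}(x)=x_1$ unpacks to $x-x_1=-Q(f(x)-D(x-x_1))\in\im Q$, and applying $D$ (using $DQ=\Id_Y$) gives $f(x)=0$. This is the construction the paper recalls in the proof of Theorem~\ref{thm:NP-map}.

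Your hands-on iteration is in fact the \emph{same} sequence that the contraction argument inside Lemma~\ref{le:A.3.2} produces for $\psi_{x_1}$: the fixed-point iteration $y_{k+1}:=y_k-(\psi_{x_1}(y_k)-x_1)$, started at $y_0=x_1$, satisfies $y_{k+1}=y_k-Qf(y_k)$ once one uses $QDQ=Q$ and $y_k-x_1\in\im Q$. So the two proofs are not merely parallel but identical at the level of iterates; the $\psi_{x_1}$ packaging simply absorbs your careful induction on ``stay inside $B_\delta(x_0)$ before invoking~(\ref{eq:A.3.4-NEW})'' into the single hypothesis of Lemma~\ref{le:A.3.2}. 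Your observation that neither injectivity of $Q$ nor closedness of $\im Q$ is used is correct and holds equally for the $\psi_{x_1}$ route.
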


\begin{definition}
Based on the proposition we define the \textbf{Newton-Picard map}
$\Nn$ as follows. Define an open subset of $U$ by
\begin{equation}\label{eq:U_0(delta)}
\begin{split}
   U_0=U_0(\delta):
   &=
   \Bigl(B_{\frac{\delta}{8}}(x_0;X)
   \cap\bigl\{\Norm{f(\cdot)}_Y <\tfrac{\delta}{4c}\bigr\}\Bigr)
\\
   &\stackrel{\text{(\ref{eq:A.3.4-NEW})}}{\subset}
   \bigl\{\Norm{df(\cdot)-D}_{\Ll(X,Y)}<\tfrac{1}{2c}\bigr\}
\end{split}
\end{equation}
and a map
\begin{equation}\label{eq:NP-map}
  \Nn=\Nn^f_{x_0,Q}\colon X\supset U\supset U_0\to X,\quad
   x_1\mapsto x
\end{equation}
which maps a point $x_1$, thought of as an approximate zero of $f$,
to the unique zero $x$ in the $\delta$-ball about $x_0$ whose
difference $x-x_1$ lies in the image of the right inverse $Q$.
Note that the domain $U_0$ of definition of the Newton-Picard map
$\Nn$ depends on the choice of equivalent norms on $X$ and $Y$.
\end{definition}

\begin{remark}\label{rem:N-id}
The uniqueness statement implies that
$\Nn|_{f^{-1}(0)\CAP U_0}=\id$.
\end{remark}

\begin{theorem}\label{thm:NP-map}
The Newton-Picard map $\Nn$ is continuously differentiable.
\end{theorem}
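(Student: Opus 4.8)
The plan is to realize the Newton-Picard map $\Nn$ as a composition of maps we already understand, and to exploit the fact that Proposition~\ref{prop:wonder-prop-NEW} is proved (in~\cite[App.\,A.3]{mcduff:2004a}) via an application of the usual inverse/implicit function theorem, whose quantitative refinement is Theorem~\ref{thm:InvFT-quant}. Concretely, fix the suitable initial point $x_0$, set $D:=df(x_0)$, let $Q\colon Y\to X$ be the chosen right inverse with $\norm{Q}\le c$, and write $P:=QD\colon X\to X$, a bounded projection. Then $X$ splits as $X=\ker P\oplus\im Q$, with $\im Q$ closed (it is the image of the section $Q$ of $D$). The zero $x=\Nn(x_1)$ is characterized by $f(x)=0$ together with $x-x_1\in\im Q$; so if we write $x=x_1+Q\eta$ for $\eta\in Y$ (this is where the constraint $x-x_1\in\im Q$ is used), then $x$ is a zero of $f$ iff $\eta$ solves $f(x_1+Q\eta)=0$. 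Define $G\colon U_0\times Y\supset V\to Y$ by $G(x_1,\eta):=f(x_1+Q\eta)$. This $G$ is continuously differentiable because $f$ is and $Q$ is bounded linear, and $\partial_\eta G(x_1,\eta)=df(x_1+Q\eta)\circ Q$.

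First I would check that at the relevant points the partial derivative $\partial_\eta G$ is an isomorphism of $Y$, with inverse bounded uniformly. Indeed $\partial_\eta G(x_1,\eta)=df(x_1+Q\eta)\circ Q = D Q + (df(x_1+Q\eta)-D)Q = \Id_Y + (df(x_1+Q\eta)-D)Q$, using $DQ=\Id_Y$. On $U_0$, and for $x_1+Q\eta$ still inside the $\delta$-ball about $x_0$, hypothesis~(\ref{eq:A.3.4-NEW}) gives $\norm{df(\cdot)-D}\le\frac{1}{2c}$, hence $\norm{(df(x_1+Q\eta)-D)Q}\le\frac{1}{2c}\cdot c=\frac12<1$, so $\partial_\eta G$ is invertible by Neumann series with $\norm{(\partial_\eta G)^{-1}}\le 2$. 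Since $\Nn(x_1)=x_1+Q\eta(x_1)$ where $\eta(x_1)$ is the (unique, by Proposition~\ref{prop:wonder-prop-NEW}) solution of $G(x_1,\eta)=0$, the continuous differentiability of $\Nn$ will follow once we know $x_1\mapsto\eta(x_1)$ is $C^1$, by the chain rule (composition of $x_1\mapsto(x_1,\eta(x_1))$ with the affine-in-$\eta$ map $(x_1,\eta)\mapsto x_1+Q\eta$).

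Next I would obtain the $C^1$ dependence $x_1\mapsto\eta(x_1)$ from the implicit function theorem applied to $G$. There are two routes: either invoke the standard Banach-space implicit function theorem directly at each point $x_1\in U_0$ (legitimate since $\partial_\eta G$ is a bounded isomorphism there and $G$ is $C^1$), or — to stay self-contained and quantitative — reduce to Theorem~\ref{thm:InvFT-quant} by considering, for fixed $x_1$, the map $\psi_{x_1}(\eta):=\eta-(\partial_\eta G(x_1,0))^{-1}G(x_1,\eta)$ and running the contraction estimate exactly as in the proof of that theorem. In either case one gets a locally defined $C^1$ map $x_1\mapsto\eta(x_1)$ with $d\eta(x_1)=-(\partial_\eta G)^{-1}\circ\partial_{x_1}G = -(\partial_\eta G)^{-1}\circ df(x_1+Q\eta(x_1))$. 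Uniqueness of the zero in Proposition~\ref{prop:wonder-prop-NEW} guarantees that these local branches glue to the globally defined $\Nn$ on $U_0$, so $\Nn$ is $C^1$ on all of $U_0$, with
\[
   d\Nn|_{x_1} = \Id_X + Q\circ d\eta(x_1)
   = \Id_X - Q\,(\partial_\eta G)^{-1}\, df(x_1+Q\eta(x_1)) .
\]
Specializing to $x_1=x_0$, where $\eta(x_0)=0$ and $\partial_\eta G(x_0,0)=\Id_Y$, this gives $d\Nn|_{x_0}=\Id_X-Q\,df(x_0)=\Id_X-QD=\Id_X-P$, which is the formula recorded as~(\ref{eq:1-P}) and used in Lemma~\ref{le:d-gamma}; I would state this as a corollary.

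The main obstacle is bookkeeping rather than conceptual: one must verify that when $x_1$ ranges over the smaller set $U_0(\delta)$ and $\eta$ over the ball of radius controlled by~(\ref{eq:A.3.5-NEW}) and~(\ref{eq:wonder-prop-main}), the point $x_1+Q\eta$ genuinely stays inside the $\delta$-ball $B_\delta(x_0;X)$ where hypothesis~(\ref{eq:A.3.4-NEW}) is available — this is exactly the radius arithmetic ($\delta/8$, $\delta/4c$, the factor $2c$, $\norm{Q}\le c$) already carried out in~\cite[App.\,A.3]{mcduff:2004a} for the existence statement, and it must be re-run to see that the implicit-function setup is valid on a neighbourhood of each $x_1\in U_0$, not merely pointwise. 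A secondary point is to confirm that the local $C^1$ branches supplied by the IFT coincide with $\Nn$; this is immediate from the uniqueness clause $x-x_1\in\im Q$, $\norm{x-x_0}<\delta$ in~(\ref{eq:wonder-prop-NEW}), since the branch value $x_1+Q\eta(x_1)$ satisfies precisely these constraints. With those checks in place the theorem follows.
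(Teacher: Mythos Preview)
Your proposal is correct and follows essentially the same high-level strategy as the paper --- reduce to an implicit/inverse function theorem applied to the defining equation $f(x)=0$ with the constraint $x-x_1\in\im Q$, and use the Neumann-series bound $\norm{(df-D)Q}\le\frac12$ to verify the relevant linearization is invertible --- but your implementation differs in a useful way.

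The paper works in $X$: it takes the map $\psi_{x_1}(x)=x+Q(f(x)-D(x-x_1))$ from McDuff--Salamon, packages the two-variable dependence into $\Psi(x,x_1)=(\psi_{x_1}(x),x_1)\colon B_\delta(x_0)\times U_0\to X\times U_0$, shows $d\Psi$ is invertible (upper-triangular with invertible diagonal blocks), and applies the inverse function theorem to $\Psi$; then $\Nn(x_1)$ is read off as the first component of $\Psi^{-1}(x_1,x_1)$, yielding formula~(\ref{eq:dNn}). You instead work in $Y$: you parametrize the constraint directly by $x=x_1+Q\eta$ with $\eta\in Y$ (using injectivity of $Q$), set $G(x_1,\eta)=f(x_1+Q\eta)$, and apply the implicit function theorem in the $\eta$-variable, where $\partial_\eta G=df\circ Q=\Id_Y+(df-D)Q$ is an explicit perturbation of the identity on $Y$. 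The two are linked by the identity $\psi_{x_1}(x_1+Q\eta)=x_1+QG(x_1,\eta)$. Your route is arguably more transparent (the invertible operator is a perturbation of $\Id_Y$ rather than of a projection), and it evaluates $df$ at $\Nn(x_1)=x_1+Q\eta(x_1)$, which is the right point; the paper's route has the advantage of staying literally inside the McDuff--Salamon setup, so the existence and smoothness arguments share the same auxiliary map.
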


\begin{proof}
We first recall how the zero $x$ in
Proposition~\ref{prop:wonder-prop-NEW}
is found from a given approximate zero $x_1\in U_0$.
One considers the map defined by
\begin{equation}\label{eq:psi}
   \psi_{x_1}\colon X\supset U\supset B_\delta(x_0)\to X
   ,\quad
   x\mapsto
   x+Q\bigl(f(x)-D\left(x-x_1\right)\bigr) .
\end{equation}
The map $\psi_{x_1}$ is continuously differentiable,
because $f$ is, and by~(\ref{eq:A.3.4-NEW}) the differential at any
$x\in B_\delta(x_0)$ satisfies
\begin{equation}\label{eq:dpsi-Id}
   \norm{\Id-d \psi_{x_1}(x)}
   \le c\norm{df(x)-D}
   \le\frac12 .
\end{equation}
Moreover, according to~\cite[Proof of Prop.\,A.3.4]{mcduff:2004a}
the map $\psi_{x_1}\colon B_\delta(x_0)\to X$
is injective and the Newton-Picard map is given by
$\Nn(x_1):=\psi_{x_1}^{-1}(x_1)$.\footnote{
  In~\cite[Proof of Prop.\,A.3.4]{mcduff:2004a} it is shown that
  $x_1\in B_{\delta/2}(\psi_{x_1}(x_0))\subset\psi_{x_1}(B_\delta(x_0))$.
  }

To show that $\Nn$ is differentiable we consider the map
$$
   \Psi\colon B_\delta(x_0)\times U_0\to X\times U_0
   ,\quad
   (x,x_1)\mapsto \left(\psi_{x_1}(x),x_1\right).
$$
The differential of $\Psi$ at a point
$(x,x_1)\in B_\delta(x_0)\times U_0$ is the linear map
$$
   d\Psi \bigr|_{(x,x_1)}
   =
   \begin{pmatrix}d\psi_{x_1}\bigr|_{x}
      &(\p_{x_1}\psi_{x_1}) \bigr|_{x} \\ 0&\Id
   \end{pmatrix}
   \stackrel{\text{(\ref{eq:psi})}}{=}
   \begin{pmatrix}d\psi_{x_1}\bigr|_{x}
      &P \\ 0&\Id
   \end{pmatrix}
   \colon X\times X\to X\times X
$$
where
\[
   P:=QD\colon X\to X
\]
is a projection.\footnote{
  Since $DQ=\Id$, the map
\begin{equation}\label{eq:P-NEW}
   P:=QD
   ,\quad
   P^2=P
   ,\quad
   \im P=\im Q
   ,\quad
   \ker P=\ker D ,
\end{equation}
is the projection $P=P_{\im Q,\ker D}$ onto the image of $Q$ along the
kernel of $D$.
Indeed $P^2=QDQD=QD=P$.
It holds $\im P=\im Q$: '$\subset$' true by definition of $P$.
'$\supset$' If $\xi=Q\eta$, then
$P\xi=PQ\eta=QDQ\eta=Q\eta=\xi$.
It holds $\ker P=\ker D$: '$\subset$'
If $P\xi=0$, then $D\xi=DQD\xi=DP\xi=0$.
'$\supset$' true by definition of $P$.
  }
Since the bound in~(\ref{eq:dpsi-Id}) is $<1$,
the linear map $d\psi_{x_1}(x)\colon X\to X$ is invertible,
therefore so is $d\Psi \bigr|_{(x,x_1)}$ with inverse
\[
   (d\Psi \bigr|_{(x,x_1)})^{-1}
   =\begin{pmatrix}
      {\color{brown}\left(d\psi_{x_1}\bigr|_{x}\right)^{-1}}
      &{\color{brown}-\left(d\psi_{x_1}\bigr|_{x}\right)^{-1} P}
      \\
      0&\Id
      \end{pmatrix} .
\]
Therefore, by the inverse function 
theorem~\cite[Thm.\,A.3.1]{mcduff:2004a},
the map $\Psi$ is injective in a neighborhood of
$(x,x_1)\in B_\delta(x_0)\times U_0$
with continuously differentiable inverse.
\\
It follows that the Newton-Picard map $\Nn$
is differentiable, too, with differential
$d\Nn(x_1)\colon X\to X$ given by the formula
\begin{equation}\label{eq:dNn}
   d\Nn(x_1)
   ={\color{brown}
   \bigl(d\psi_{x_1}\bigr|_{x_1}\bigr)^{-1} \left(\Id-P\right)}
   \stackrel{\text{(\ref{eq:psi})}}{=}
   \Bigl(\Id+Q \,df(x_1)-P\Bigr)^{-1} \left(\Id-P\right) .
\end{equation}
The differential $d\Nn(x_1)$ depends continuously on $x_1$ since
$df(x_1)$ does.
\end{proof}

\begin{remark}[The inverse in~(\ref{eq:dNn})]
\label{rem:A}
Let $A:=Q\left(D-df|_{x_1}\right)=P-Q\,df|_{x_1}$,
then $\norm{A}\le\frac12$ since $\norm{Q}\le c$ and by~(\ref{eq:A.3.4-NEW}).
The inverse for $\Id-A$ is given by
\begin{equation}\label{eq:Neumann}
   \sum_{n=0}^\infty A^n
   =\left(\Id-A\right)^{-1}
   =\Bigl(\Id+Q \,df|_{x_1}-P\Bigr)^{-1}
\end{equation}
where the sum, called \textbf{Neumann series}, converges in $\Ll(X)$
whenever $A$ has operator norm less than $1$;
for details see e.g.~\cite[(VI.2) p.\,191]{reed:1980a} with $\lambda=1$.
Thus there is the estimate
\begin{equation}\label{eq:inverse-2}
   \norm{\left(\Id+Q \,df|_{x_1}-P\right)^{-1}}
   \le \sum_{n=0}^\infty \norm{A}^n
   \le \sum_{n=0}^\infty \frac{1}{2^n}
   =\frac{1}{1-\frac12}=2
\end{equation}
and therefore
\[
   \norm{\left(\Id+Q \,df|_{x_1}-P\right)^{-1}-\Id}
   =\Norm{\sum_{n=1}^\infty A^n}
   \le \sum_{n=1}^\infty \norm{A}^n
   =2-\frac{1}{2^0}=1 .
\]
\end{remark}

\begin{remark}[Smoothness]\label{rem:NP-map}
If $f$ in Proposition~\ref{prop:wonder-prop-NEW} is assumed to be not
only continuously differentiable, but smooth, then it follows
from~(\ref{eq:dNn}) that the Newton-Picard map $\Nn$ is smooth as well.
\end{remark}

\begin{corollary}\label{cor:NP-map}
By~(\ref{eq:dNn}), it holds $d\Nn(x_0)=\Id-P$.
If additionally $f(x_0)=0$ in Proposition~\ref{prop:wonder-prop-NEW},
then $\Nn(x_0)=x_0$ by uniqueness.
\end{corollary}

\begin{remark}\label{rem:NP-map-mu}
Given $\mu\ge 2$,
the restriction of the Newton-Picard map $\Nn:X\supset U_0\to X$
in~(\ref{eq:NP-map}) to the subset
\[
   U_0^\mu
   := U_0\cap \norm{df(\cdot)-D}^{-1}[0,\tfrac{1}{\mu c})
   ,\qquad U_0^2\stackrel{\text{(\ref{eq:U_0(delta)})}}{=} U_0 ,
\]
satisfies, just as above, an estimate of the form
\begin{equation*}
\begin{split}
   \norm{\bigl(\Id+Q \,df|_{x_1}-P\bigr)^{-1}-\Id}
   \le \sum_{n=1}^\infty \norm{A}^n
   \le \sum_{n=1}^\infty \tfrac{1}{\mu^n}
   =\frac{1}{\mu}\cdot \frac{1}{1-\frac{1}{\mu}}
   =\frac{1}{\mu-1}
\end{split}
\end{equation*}
for every $x_1\in U_0^\mu$ and where we used that
\[
   \norm{A}=\norm{Q(D-df|_{x_1})}\le\norm{Q}\cdot\norm{D-df|_{x_1}}
   \le\tfrac{1}{\mu} .
\]
\end{remark}

\boldmath
\subsection{Tangent map}
\label{sec:NP-tangent}
\unboldmath

\begin{hypothesis}\label{rem:NP-tangent}
Consider the situation of Proposition~\ref{prop:wonder-prop-NEW}.
In this section we assume, in addition, that the map
$f\colon X\supset U\to Y$ is two times continuously differentiable.
Recall that $x_0\in U$ is a suitable initial point and $\delta$ and
$c$ are positive constants, the three of them related by
assumption~(\ref{eq:A.3.4-NEW}). Choose $\delta>0$ smaller,
if necessary, such that
\begin{equation}\label{eq:A.3.4-NEW-modified}
   \norm{x-x_0}<\delta\qquad\Rightarrow\qquad
   \norm{df(x)-D}\le\frac{1}{4c} .
\end{equation}

\smallskip
Suppose that there is a constant $c_2>0$ such that
$\norm{d^2f(x)}\le c_2$ for all 
$x\in B_\delta(x_0)$.
Define
\[
   \hat\delta:=\min\bigl\{\delta,\tfrac{1}{4c c_2}\bigr\}
   \subset (0,1) .
\]
\end{hypothesis}

\begin{theorem}\label{thm:tangent-map}
Under Hypothesis~\ref{rem:NP-tangent}
suppose that $x_1\in U_0(\delta)$; see~(\ref{eq:U_0(delta)}).
Abbreviate $\Nn:=\Nn^f_{x_0,Q}$.
Then for each $\xi_1\in X$ which is small in the sense that
\begin{equation}\label{eq:xi_1-small}  
   \norm{\xi_1}<\frac{\delta}{8(1{\color{red}\,+\norm{d\Nn|_{x_0}}})}
   ,\qquad
   \norm{df|_{x_1}\xi_1}_Y
   <\frac{\delta}{4c} ,
\end{equation}
there is the estimate
\begin{equation}\label{eq:Delta}
   \norm{d\Nn|_{x_1}\xi_1-\xi_1}_{X}
   \le
   2c
   \max\left\{{\color{cyan}\tfrac{\delta}{\hat\delta}}\norm{f(x_1)}_Y,
   \norm{df|_{x_1}\xi_1}_Y\right\}
   .
\end{equation}
\end{theorem}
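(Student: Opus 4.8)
The plan is to control $d\Nn|_{x_1}\xi_1 - \xi_1$ by exploiting the explicit formula~(\ref{eq:dNn}) for the differential of the Newton-Picard map, namely $d\Nn(x_1) = \bigl(\Id + Q\,df(x_1) - P\bigr)^{-1}(\Id - P)$ with $P = QD$. The key algebraic identity is that $\Id - P = d\Nn(x_0)$ (Corollary~\ref{cor:NP-map}), and that $(\Id - P)$ annihilates $\im Q$ while fixing things modulo $\ker D$. First I would write $d\Nn|_{x_1}\xi_1 - \xi_1 = \bigl[(\Id + Q\,df(x_1) - P)^{-1} - \Id\bigr](\Id - P)\xi_1 + (\Id - P)\xi_1 - \xi_1 = \bigl[(\Id + Q\,df(x_1) - P)^{-1} - \Id\bigr](\Id - P)\xi_1 - P\xi_1$. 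Then, using the Neumann series bound from Remark~\ref{rem:A} (with the sharper $\tfrac{1}{4c}$ from~(\ref{eq:A.3.4-NEW-modified}), giving $\norm{A}\le\tfrac14$ and hence $\norm{(\Id+Q\,df(x_1)-P)^{-1}-\Id}\le\tfrac13$), one controls the first term by roughly $\tfrac13\norm{(\Id-P)\xi_1}$.

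The more conceptual route, which I expect the authors intend, is the tangent-map viewpoint advertised in Remark~\ref{rmk:higher}: one shows $d\Nn|_{x_1}\xi_1 = T\Nn(x_1,\xi_1)$ is itself obtained by applying a Newton-Picard-type construction to the tangent map $Tf$, with initial point $(x_0,0)$ and right inverse $TQ$. Concretely, $\xi := d\Nn|_{x_1}\xi_1$ is characterized as the unique solution near $0$ of the linearized zero problem $df|_x \xi + (\text{correction involving } d^2f) = 0$ with $x = \Nn(x_1)$ and $\xi - \xi_1 \in \im Q$. One then invokes Proposition~\ref{prop:wonder-prop-NEW} applied to the map $(x,\xi)\mapsto (f(x), df|_x\xi)$ (or a suitable slice of it), whose derivative difference is controlled by $c_2 = \sup\norm{d^2 f}$ on $B_\delta(x_0)$; the hypothesis $\hat\delta \le \tfrac{1}{4cc_2}$ is exactly what makes the analogue of~(\ref{eq:A.3.4-NEW-modified}) hold for this tangent problem. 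The estimate~(\ref{eq:wonder-prop-main}) applied in this setting then reads $\norm{\xi - \xi_1} \le 2c\,\norm{(Tf\text{-defect at }(x_1,\xi_1))}$, and the defect splits into a part measuring how far $x_1$ is from being a zero of $f$ (contributing $\norm{f(x_1)}$, with the factor $\delta/\hat\delta$ absorbing the rescaling needed so that $(x_1,\xi_1)$ lands in the correctly-sized ball for $Tf$) and a part measuring $df|_{x_1}\xi_1$; taking the max yields~(\ref{eq:Delta}). The smallness conditions~(\ref{eq:xi_1-small}) are precisely the conditions~(\ref{eq:A.3.5-NEW}) for the tangent problem, where the ball radius for the $\xi$-variable shrinks by the factor $1 + \norm{d\Nn|_{x_0}} = 1 + \norm{\Id - P}$ because the initial point for the $x$-coordinate is $\Nn(x_1)$ rather than $x_1$ itself, and one must keep $(\Nn(x_1), \xi)$ inside $B_\delta$.

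In carrying this out I would: (1) set up the tangent map $Tf\colon TU \to TY$ and verify that $T(x_0,0)$ is a suitable initial point with right inverse $TQ$, surjectivity being inherited coordinate-wise from $D$; (2) check the second-order smallness hypothesis $\norm{df(x) - D} \le \tfrac{1}{4c}$ on $B_{\hat\delta}$ — this is where $\hat\delta \le \tfrac{1}{4cc_2}$ and the mean value inequality $\norm{df(x) - df(x_0)} \le c_2\norm{x - x_0}$ enter, noting $df(x_0) = D$; (3) verify that $(x_1, \xi_1)$, or rather the shifted pair based at $(\Nn(x_1), \cdot)$, satisfies the approximate-zero conditions~(\ref{eq:A.3.5-NEW}) for $Tf$ — this is exactly~(\ref{eq:xi_1-small}) after accounting for $\norm{\Nn(x_1) - x_0} \le \norm{\Nn(x_1) - x_1} + \norm{x_1 - x_0}$ and~(\ref{eq:wonder-prop-main}); (4) read off~(\ref{eq:wonder-prop-main}) for the tangent problem and bound the $Tf$-defect by the stated max. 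The main obstacle is bookkeeping step (3): correctly tracking how the ball radii rescale when one changes the base point of the Newton-Picard construction from $x_1$ to $\Nn(x_1)$, and pinning down where the factor $\delta/\hat\delta$ in~(\ref{eq:Delta}) and the factor $1 + \norm{d\Nn|_{x_0}}$ in~(\ref{eq:xi_1-small}) come from — these are not routine and require care in matching the hypotheses of Proposition~\ref{prop:wonder-prop-NEW} to the tangent setting. A secondary subtlety is ensuring the uniqueness clause of that proposition really does identify the solution with $d\Nn|_{x_1}\xi_1$, which follows by differentiating the defining relations $f(\Nn(x_1)) = 0$ and $\Nn(x_1) - x_1 \in \im Q$, but should be stated cleanly.
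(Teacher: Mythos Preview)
Your tangent-map strategy is the paper's: one applies Proposition~\ref{prop:wonder-prop-NEW} to $\hat f:=Tf$ on $TX$ with initial point $\hat x_0:=(x_0,0)$ and right inverse $\hat Q:=Q\oplus Q$, equips $TX$ and $TY$ with the weighted max norm $\norm{(x,\xi)}:=\max\{\norm{x},\tfrac{\hat\delta}{\delta}\norm{\xi}\}$, and then reads off~(\ref{eq:wonder-prop-main}) in this norm to obtain~(\ref{eq:Delta}) after multiplying through by $\delta/\hat\delta$. The identification of the resulting zero $(x,\xi)$ with $(\Nn(x_1),d\Nn|_{x_1}\xi_1)$ is indeed done by the uniqueness clause (the paper isolates this as Proposition~\ref{prop:TNf=NTf}), and your sketch of that verification via differentiating $f(\Nn(\cdot))=0$ and $\Nn(\cdot)-\cdot\in\im Q$ is correct.

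Two attributions in your outline are off and would derail the bookkeeping if followed literally. First, the role of $c_2$ and $\hat\delta\le\tfrac{1}{4cc_2}$ is \emph{not} to verify the diagonal condition $\norm{df(x)-D}\le\tfrac{1}{4c}$ via a mean-value inequality; that condition is already the standing hypothesis~(\ref{eq:A.3.4-NEW-modified}). Rather, $c_2$ enters through the \emph{off-diagonal} block $d^2f|_x(\xi,\cdot)$ of $d\hat f|_{(x,\xi)}-\hat D$: the weight $\hat\delta/\delta$ on the fibre component ensures this block contributes at most $\hat\delta\, c_2\le\tfrac{1}{4c}$ to the operator norm, so that the total is $\le\tfrac{1}{4c}+\tfrac{1}{4c}=\tfrac{1}{2c}$ as required by~(\ref{eq:A.3.4-NEW}). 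Second, the initial point for the tangent Newton--Picard stays at $(x_0,0)$, not at $(\Nn(x_1),\cdot)$; there is no base-point shift. The factor $1+\norm{d\Nn|_{x_0}}$ in~(\ref{eq:xi_1-small}) does not come from such a shift but from the uniqueness verification: to confirm that $d\Nn|_{x_1}\xi_1$ satisfies the third condition in~(\ref{eq:wonder-prop-NEW}) for $\hat f$ one estimates, using~(\ref{eq:dNn}) and~(\ref{eq:inverse-2}), $\norm{d\Nn|_{x_1}\xi_1}\le 2\,\norm{d\Nn|_{x_0}}\,\norm{\xi_1}$, and the smallness hypothesis on $\norm{\xi_1}$ is calibrated to make this land inside the required ball.
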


\begin{corollary}\label{cor:tangent-map}
For $(x_1,\xi_1)\in T U_0(\delta)$, see~(\ref{eq:U_0(delta)}), 
there is the estimate
\begin{equation*}
\begin{split}
   &\norm{d\Nn|_{x_1}\xi_1-\xi_1}_{X}
   \\  
   &\le
   2c
   \max\left\{
   {\color{cyan}\tfrac{9(1{\color{red}\,+\norm{d\Nn|_{x_0}}})\norm{\xi_1}}{\hat\delta}}\norm{f(x_1)}_Y,
{\color{cyan}\tfrac{5c\, \norm{df|_{x_1}\xi_1}}{\hat\delta}}\norm{f(x_1)}_Y,
   \norm{df|_{x_1}\xi_1}_Y\right\}
   .
\end{split}
\end{equation*}
\end{corollary}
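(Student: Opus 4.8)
The plan is to deduce Corollary~\ref{cor:tangent-map} from Theorem~\ref{thm:tangent-map} by a rescaling argument, exploiting that the differential $d\Nn|_{x_1}\colon X\to X$ is a bounded \emph{linear} map; in particular $\Nn\in C^1$ by Theorem~\ref{thm:NP-map}, so the constant $1+\norm{d\Nn|_{x_0}}$ is finite. Theorem~\ref{thm:tangent-map} already provides the estimate~(\ref{eq:Delta}), but only for vectors $\xi_1$ that are small in the sense of~(\ref{eq:xi_1-small}), whereas Corollary~\ref{cor:tangent-map} allows an arbitrary $\xi_1\in X$ (with $x_1\in U_0(\delta)$, which is exactly the hypothesis of Theorem~\ref{thm:tangent-map}). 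I would therefore shrink $\xi_1$ to a vector $t\xi_1$ that meets~(\ref{eq:xi_1-small}), apply the theorem to it, and then divide the resulting inequality by $t$. This works because $\xi_1\mapsto d\Nn|_{x_1}\xi_1-\xi_1$ and $\xi_1\mapsto df|_{x_1}\xi_1$ are both linear, while $f(x_1)$ does not depend on $\xi_1$.

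Concretely, the case $\xi_1=0$ is trivial since then all the quantities in question vanish, so I would assume $\xi_1\ne0$ and put
\[
   t:=\min\left\{
      \frac{\delta}{9\bigl(1+\norm{d\Nn|_{x_0}}\bigr)\norm{\xi_1}}
      ,\quad
      \frac{\delta}{5c\,\norm{df|_{x_1}\xi_1}_Y}
   \right\}>0,
\]
with the convention that the second argument is $+\infty$ when $df|_{x_1}\xi_1=0$. Then $t\norm{\xi_1}\le\tfrac{\delta}{9(1+\norm{d\Nn|_{x_0}})}<\tfrac{\delta}{8(1+\norm{d\Nn|_{x_0}})}$, and by linearity of $df|_{x_1}$ we get $\norm{df|_{x_1}(t\xi_1)}_Y=t\norm{df|_{x_1}\xi_1}_Y\le\tfrac{\delta}{5c}<\tfrac{\delta}{4c}$; hence $t\xi_1$ satisfies both smallness conditions in~(\ref{eq:xi_1-small}), and Theorem~\ref{thm:tangent-map} applies with $\xi_1$ replaced by $t\xi_1$.

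Inserting $t\xi_1$ into~(\ref{eq:Delta}) and using $d\Nn|_{x_1}(t\xi_1)-t\xi_1=t\,(d\Nn|_{x_1}\xi_1-\xi_1)$ together with $df|_{x_1}(t\xi_1)=t\,df|_{x_1}\xi_1$ yields
\[
   t\,\norm{d\Nn|_{x_1}\xi_1-\xi_1}_X
   \le 2c\,\max\left\{\tfrac{\delta}{\hat\delta}\norm{f(x_1)}_Y,\; t\,\norm{df|_{x_1}\xi_1}_Y\right\}.
\]
Dividing by $t>0$ and substituting $\tfrac1t=\max\bigl\{\tfrac{9(1+\norm{d\Nn|_{x_0}})\norm{\xi_1}}{\delta},\tfrac{5c\,\norm{df|_{x_1}\xi_1}_Y}{\delta}\bigr\}$ distributes the outer maximum over the $\tfrac{\delta}{\hat\delta}\norm{f(x_1)}_Y$ term, turning it into $\max\bigl\{\tfrac{9(1+\norm{d\Nn|_{x_0}})\norm{\xi_1}}{\hat\delta}\norm{f(x_1)}_Y,\ \tfrac{5c\,\norm{df|_{x_1}\xi_1}_Y}{\hat\delta}\norm{f(x_1)}_Y\bigr\}$, and so reproduces exactly the three-term bound claimed. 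The whole argument is bookkeeping with the maximum; the only points needing any care are the degenerate branches $\xi_1=0$ and $df|_{x_1}\xi_1=0$ in the definition of $t$ and in the division, together with the finiteness and positivity of the constants $1+\norm{d\Nn|_{x_0}}$, $c$, and $\hat\delta$ (guaranteed by Theorem~\ref{thm:NP-map} and Hypothesis~\ref{rem:NP-tangent}). I do not anticipate a genuine obstacle.
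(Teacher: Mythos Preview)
Your proposal is correct and follows essentially the same approach as the paper: the paper also defines the scaling factor $\lambda:=\min\bigl\{\tfrac{\delta}{9(1+\norm{d\Nn|_{x_0}})\norm{\xi_1}},\tfrac{\delta}{5c\,\norm{df|_{x_1}\xi_1}}\bigr\}$, applies Theorem~\ref{thm:tangent-map} to $\lambda\xi_1$, and divides by $\lambda$. Your treatment is in fact slightly more careful, since you explicitly handle the degenerate cases $\xi_1=0$ and $df|_{x_1}\xi_1=0$.
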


\begin{proof}[Proof of Corollary~\ref{cor:tangent-map}]
Let $x_1\in U_0(\delta)$; see~(\ref{eq:U_0(delta)}).
For $\xi_1\in X$ we define
\[
   \lambda=\lambda(x_1,\xi_1)
   :=\min\left\{
   \frac{\delta}{9(1{\color{red}\,+\norm{d\Nn|_{x_0}}})\norm{\xi_1}}
   ,
   \frac{\delta}{5c\, \norm{df|_{x_1}\xi_1}}
   \right\} .
\]
Observe that
\begin{equation*}
\begin{split}
   \frac{1}{\lambda}
   =\max\left\{
   \frac{9(1{\color{red}\,+\norm{d\Nn|_{x_0}}})\norm{\xi_1}}{\delta}
   ,
   \frac{5c\, \norm{df|_{x_1}\xi_1}}{\delta}
   \right\} .
\end{split}
\end{equation*}
Then $\lambda\xi_1$ meets condition~(\ref{eq:xi_1-small}), so
there is the estimate
\begin{equation*}
\begin{split}
   &\norm{d\Nn|_{x_1}\xi_1-\xi_1}_{X}
   \\
   &=\frac{1}{\lambda}\norm{(d\Nn|_{x_1}-\Id)\lambda\xi_1}_{X}
   \\
   &\le
   \frac{2c}{\lambda}
   \max\left\{{\color{cyan}\tfrac{\delta}{\hat\delta}}\norm{f(x_1)}_Y,
   \norm{df|_{x_1}\lambda\xi_1}_Y\right\}
   \\
   &\le
   2c
   \max\left\{{\color{cyan}\tfrac{\delta}{\hat\delta}}\tfrac{1}{\lambda}\norm{f(x_1)}_Y,
   \norm{df|_{x_1}\xi_1}_Y\right\}
   \\
   &=
   2c
   \max\left\{
   {\color{cyan}\tfrac{9(1{\color{red}\,+\norm{d\Nn|_{x_0}}})\norm{\xi_1}}{\hat\delta}}\norm{f(x_1)}_Y,
{\color{cyan}\tfrac{5c\, \norm{df|_{x_1}\xi_1}}{\hat\delta}}\norm{f(x_1)}_Y,
   \norm{df|_{x_1}\xi_1}_Y\right\}
   .
\end{split}
\end{equation*}
\end{proof}

We can summarize the result of this section
more compactly in tangent map notation by the following theorem.

\begin{theorem}\label{thm:tangent-map-knackig}
Under Hypothesis~\ref{rem:NP-tangent},
given $W_1=(x_1,\xi_1)\in T U_0(\delta)$, see~(\ref{eq:U_0(delta)}), 
there is a constant $C$ depending on $\norm{\xi_1}_X$ and
$\norm{d\Nn|_{x_0}}$ such that
\begin{equation}\label{eq:wonder-prop-diff-T}
\begin{split}
   \norm{T\Nn(W_1) -W_1}_{TX}
   \le
   C \norm{Tf(W_1)}_{TY}\Bigl(1+\norm{Tf(W_1)}_{TY}\Bigr)
   .
\end{split}
\end{equation}
\end{theorem}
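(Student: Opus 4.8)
The plan is to pass to the canonical identifications $TX\cong X\times X$ and $TY\cong Y\times Y$, write $W_1=(x_1,\xi_1)$ with $x_1\in U_0(\delta)$ and $\xi_1\in X$, and bound the two components of $T\Nn(W_1)-W_1$ separately. Since $T\Nn(W_1)=(\Nn(x_1),d\Nn|_{x_1}\xi_1)$ and $Tf(W_1)=(f(x_1),df|_{x_1}\xi_1)$, and the norm on a product of Banach spaces dominates each factor while being dominated by the sum of the factors, it suffices to estimate $\norm{\Nn(x_1)-x_1}_X$ and $\norm{d\Nn|_{x_1}\xi_1-\xi_1}_X$, using freely that $\norm{f(x_1)}_Y\le\norm{Tf(W_1)}_{TY}$ and $\norm{df|_{x_1}\xi_1}_Y\le\norm{Tf(W_1)}_{TY}$.

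For the zeroth order term I would invoke the distance estimate~(\ref{eq:wonder-prop-main}) of Proposition~\ref{prop:wonder-prop-NEW} with approximate zero $x_1$, giving $\norm{\Nn(x_1)-x_1}_X\le 2c\norm{f(x_1)}_Y\le 2c\,\norm{Tf(W_1)}_{TY}$, which already has the required shape.

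For the first order term I would apply Corollary~\ref{cor:tangent-map} to $(x_1,\xi_1)\in TU_0(\delta)$ and dominate the three entries of the maximum one at a time. Abbreviating $t:=\norm{Tf(W_1)}_{TY}$: the first entry $\tfrac{9(1+\norm{d\Nn|_{x_0}})\norm{\xi_1}}{\hat\delta}\,\norm{f(x_1)}_Y$ is at most $C_1 t$ with $C_1:=\tfrac{9(1+\norm{d\Nn|_{x_0}})\norm{\xi_1}_X}{\hat\delta}$; the second entry $\tfrac{5c\,\norm{df|_{x_1}\xi_1}}{\hat\delta}\,\norm{f(x_1)}_Y$ is at most $\tfrac{5c}{\hat\delta}\,t^2$; and the third entry $\norm{df|_{x_1}\xi_1}_Y$ is at most $t$. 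Hence $\norm{d\Nn|_{x_1}\xi_1-\xi_1}_X\le 2c\max\{C_1,1\}\,t+\tfrac{10c^2}{\hat\delta}\,t^2$. Because $\delta,c,c_2$, and therefore $\hat\delta$, are fixed once and for all by Hypothesis~\ref{rem:NP-tangent}, the quantity $C_1$ (and thus the whole bound) depends only on $\norm{\xi_1}_X$ and $\norm{d\Nn|_{x_0}}$. Adding the two component estimates yields $\norm{T\Nn(W_1)-W_1}_{TX}\le \bigl(2c+2c\max\{C_1,1\}\bigr)t+\tfrac{10c^2}{\hat\delta}t^2\le C\,t(1+t)$ with, say, $C:=\max\bigl\{2c+2c\max\{C_1,1\},\tfrac{10c^2}{\hat\delta}\bigr\}$, which has the advertised dependence.

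I do not anticipate a genuine obstacle: the theorem merely repackages Proposition~\ref{prop:wonder-prop-NEW} together with Corollary~\ref{cor:tangent-map} into the compact tangent-bundle inequality~(\ref{eq:wonder-prop-diff-T}). The only point deserving a moment's care is that membership $W_1\in TU_0(\delta)$ supplies precisely what those results demand, namely $x_1\in U_0(\delta)$ (so that $\Nn$ and $d\Nn$ are defined at $x_1$, cf.~(\ref{eq:dNn})) and $\xi_1\in X$ arbitrary, together with the observation that the statement explicitly permits $C$ to depend on $\norm{\xi_1}_X$, which is exactly the freedom exploited above.
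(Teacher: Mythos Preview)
Your proposal is correct and follows essentially the same approach as the paper: decompose $T\Nn(W_1)-W_1$ into its two components, apply~(\ref{eq:wonder-prop-main}) from Proposition~\ref{prop:wonder-prop-NEW} to the base component and Corollary~\ref{cor:tangent-map} to the fiber component. You have simply filled in the bookkeeping that the paper leaves implicit, tracking how the three terms in the maximum from Corollary~\ref{cor:tangent-map} combine to produce the $t(1+t)$ shape with a constant depending only on $\norm{\xi_1}_X$ and $\norm{d\Nn|_{x_0}}$.
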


\begin{proof}[Proof of Theorem~\ref{thm:tangent-map-knackig}]
Write $W_1=(x_1,\xi_1)$ and observe that
\[
   T\Nn(W_1) -W_1
   =\left(\Nn(x_1)-x_1, d\Nn|_{x_1}\xi_1-\xi_1\right) .
\]
To the first component apply estimate~(\ref{eq:wonder-prop-main})
in the McDuff-Salamon Proposition~\ref{prop:wonder-prop-NEW}
and to the second component apply Corollary~\ref{cor:tangent-map}.
\end{proof}

\begin{proof}[Proof of Theorem~\ref{thm:tangent-map}]
We first consider the Newton-Picard map $\Nn^{Tf}$ for
the tangent map $Tf$ and then compare it with the tangent map $T\Nn^f$
of $\Nn^f$.

\smallskip
On $\hat X:=TX=X\oplus X\ni (x,\xi)$ and
$\hat Y:=TY=Y\oplus Y\ni (y,\eta)$
We define norms for
$(x,\xi)\in TX=X\oplus X$, respectively $(y,\eta)\in TY=Y\oplus Y$ by
\[
   \norm{(x,\xi)}:=\max\{ 
\norm{x},
 {\color{red}\tfrac{\hat\delta}{\delta}}\norm{\xi}\}
   ,\qquad
   \norm{(y,\eta)}:=\max\{ 
\norm{y},
 {\color{red}\tfrac{\hat\delta}{\delta}}\norm{\eta}\} .
\]
In the following we study the tangent map $\hat f:=Tf$ which is
defined by $Tf(x,\xi)=\left(f(x),df|_x\xi\right)$.
The task at hand is to choose the corresponding quantities
$\hat {x_0}$, $\hat D$, $\hat Q$, $\hat c$, $\hat \delta$, and $\hat
{U_0}$ in order to apply Proposition~\ref{prop:wonder-prop-NEW} to
$\hat f$.
\\
As initial point for $\hat f$ on $\hat U:=TU=U\times X$ we pick
$\hat{x_0}:=(x_0,0)$.
The operator $\hat D:=d\hat f|_{(x_0,0)}=D\oplus D$ is
onto. A right inverse is given by the sum $\hat Q:=Q\oplus Q$
with bound $\hat c=c$.\footnote{
Use the bound $\norm{Q}\le c$ to obtain the inequality in what follows
\begin{equation*}
\begin{split}
   \norm{Q\oplus Q}_{\Ll(TY,TX)}
   :&=\sup_{\norm{(\hat y,\hat \eta)}_{TY}=1}
   \norm{(Q\oplus Q)(\hat y,\hat \eta)}_{TX}
\\
   &=\sup_{\norm{(\hat y,\hat \eta)}_{TY}=1}
   \max\{\norm{Q\hat y}, {\color{red}\tfrac{\hat\delta}{\delta}} \norm{Q\hat\eta}\}
\\
   &\le c \sup_{\norm{(\hat y,\hat \eta)}_{TY}=1}
   \underbrace{\max\{\norm{\hat y},
     {\color{red}\tfrac{\hat\delta}{\delta}}
     \norm{\hat\eta}\}}_{\norm{(\hat y,\hat\eta)}_{TY}} =c .
\end{split}
\end{equation*}
}
Observe that $B_{\delta}(\hat{x_0};TX)\subset U\times X=\hat U$.
Suppose that $x,\xi\in X$ satisfy the estimate
\[
   \norm{(x,\xi)-(x_0,0)}
   =\max\{ 
\norm{x-x_0},
   {\color{red}\tfrac{\hat\delta}{\delta}}\norm{\xi}\}
   <\delta .
\]
In particular, we have $x\in B_\delta(x_0)$,
hence $\norm{d^2f(x)}\le c_2$ and $\norm{df|_x-D}\le \frac{1}{4c}$,
by~(\ref{eq:A.3.4-NEW-modified}).
For elements $\tilde x$ and $\tilde\xi$ of $X$ consider the operator
difference
\begin{equation*}
\begin{split}
   &\Norm{\left(d\hat f|_{(x,\xi)}-\hat D\right)
{\color{gray} 
   (\tilde x,\tilde\xi)
}}_{TY}
\\
   &=\Norm{
   \begin{bmatrix}
      df|_x-D&0\\d^2f|_x(\xi,\cdot)&df|_x-D
   \end{bmatrix}
{\color{gray}
   \begin{bmatrix}
      \tilde x\\\tilde\xi
   \end{bmatrix}
}}_{TY}
\\
   &=\max\left\{ 
   \norm{(df|_x-D)\tilde x}, {\color{red}\tfrac{\hat\delta}{\delta}}
   \norm{(df|_x-D)\tilde\xi+d^2f|_x(\underline{\xi},\tilde x)}
   \right\}
\\
   &\stackrel{\text{(\ref{eq:A.3.4-NEW-modified})}}{\le}
   \max\left\{
   \tfrac{1}{4c} 
\norm{\tilde x},
   {\color{red}\tfrac{\hat\delta}{\delta}}\tfrac{1}{4c}\norm{\tilde\xi}
   \right\}
   + {\color{red}\tfrac{\hat\delta}{\delta}}
   \underline{\delta}\norm{d^2f(x)}\cdot\norm{\tilde x}
\\
   &=
   \tfrac{1}{4c}\norm{(\tilde x,\tilde\xi)}
   +\hat\delta\norm{d^2f(x)}\cdot\norm{\tilde x} .
\end{split}
\end{equation*}
Take
   \todo[color=yellow!40]{\tiny $\max\{\alpha,\beta+\gamma\}
   \stackrel{\gamma\ge 0}{\le}
   \max\{\alpha,\beta\}+\gamma$
   }
the supremum over all $\norm{(\tilde x,\tilde\xi)}=1$ to get the
operator norm estimate
\[
   \norm{d\hat f|_{(x,\xi)}-\hat D}
   \le\tfrac{1}{4c}+\hat\delta\norm{d^2f(x)}
   \le\tfrac{1}{4c}+\tfrac{1}{4cc_2} c_2
   =\tfrac{1}{2c} .
\]
Thus we have verified condition~(\ref{eq:A.3.4-NEW})
in Proposition~\ref{prop:wonder-prop-NEW}
with $\hat f$ and $\hat\delta$ in place of $f$ and $\delta$.
We next check condition~(\ref{eq:A.3.5-NEW}) for $\hat f$ and
$\hat\delta$ and any
\[
   \hat{x_1}:=(x_1,\xi_1)\in X\oplus X ,\quad
   \text{where $x_1\in U_0(\delta)$ and $\xi_1$ satisfies~(\ref{eq:xi_1-small})} .
\]
By $x_1\in U_0(\delta)$ it holds
$\norm{f(x_1)}<\tfrac{\delta}{4c}$
and $\norm{x_1-x_0}<\tfrac{\delta}{8}$.
By~(\ref{eq:xi_1-small}) we get~(\ref{eq:A.3.5-NEW})
\begin{equation}\label{eq:jhghjvhg}
   \norm{\hat f(\hat{x_1})}
   =
\max\{
   \norm{f(x_1)},
   {\color{red}\tfrac{\hat\delta}{\delta}} \norm{df|_{x_1}\xi_1}
\}
   <
\max\{
   \tfrac{\delta}{4c}, \tfrac{\hat\delta}{4c}
\}
   =\frac{\delta}{4c}
\end{equation}
and
\[
   \norm{\hat{x_1}-\hat{x_0}}
   =
\max\{
   \norm{x_1-x_0},
   {\color{red}\tfrac{\hat\delta}{\delta}} \norm{\xi_1}
\}
   <\max\{\tfrac{\delta}{8}, \tfrac{\hat\delta}{8}
   \}
  =\frac{\delta}{8}
   .
\]
Then Proposition~\ref{prop:wonder-prop-NEW} for $\hat f$ and
$\delta$ yields a unique zero $\hat x=(x,\xi)$ of
$\hat f$ such that
\begin{equation}\label{eq:claim-unique}
   f(x)=0,\; df|_{x}\xi=0
   ,\quad
   x-x_1,\xi-\xi_1\in\im Q
   ,\quad
   \max\{\norm{x-x_0},
   {\color{red}\tfrac{\hat\delta}{\delta}} \norm{\xi}\}<\delta .
\end{equation}
In particular, since $\norm{x-x_0}<\delta$
the element $x$ is the same as
the one \underline{uniquely} determined by~(\ref{eq:wonder-prop-NEW})
and baptized $\Nn^f_{x_0,Q}(x_1)$ in~(\ref{eq:NP-map}).
Moreover, Proposition~\ref{prop:wonder-prop-NEW} for $\hat f$ and
$\delta$ yields that
\begin{equation}\label{eq:wonder-Tf}
   \max\{ 
\norm{x-x_1}, {\color{red}\tfrac{\hat\delta}{\delta}}\norm{\xi-\xi_1}\}
   \le 2c\max\{ 
\norm{f(x_1)}, {\color{red}\tfrac{\hat\delta}{\delta}}\norm{df|_{x_1}\xi_1}\}
   .
\end{equation}

To conclude the proof of Theorem~\ref{thm:tangent-map}
we need

\begin{proposition}\label{prop:TNf=NTf}
There is the identity
$T\Nn^f_{x_0,Q}=\Nn^{Tf}_{(x_0,0),Q\oplus Q}$, that is
\begin{equation}\label{eq:claim-TN}
   x=\Nn(x_1)
   ,\qquad
   \xi=d\Nn|_{x_1}\xi_1 .
\end{equation}
where we abbreviated $\Nn=\Nn^f_{x_0,Q}$.
\end{proposition}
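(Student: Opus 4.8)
The plan is to prove the two identities in~(\ref{eq:claim-TN}) one at a time. The first, $x=\Nn(x_1)$, is in effect already in hand: by~(\ref{eq:claim-unique}) the vector $x$ is a zero of $f$ with $x-x_1\in\im Q$ and $\norm{x-x_0}<\delta$, so $x$ is exactly the element characterized \emph{uniquely} by~(\ref{eq:wonder-prop-NEW}), i.e.\ $x=\Nn(x_1)$ in the sense of~(\ref{eq:NP-map}). It therefore remains to prove $\xi=d\Nn|_{x_1}\xi_1$, and the idea is to check that $d\Nn|_{x_1}\xi_1$ satisfies exactly the same two linear relations that~(\ref{eq:claim-unique}) records for $\xi$, and then to argue that those relations pin down a unique vector.

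First I would use that $U_0(\delta)$ is open and contains $x_1$, so by Theorem~\ref{thm:NP-map} the Newton-Picard map $\Nn$ is of class $C^1$ on a neighborhood of $x_1$; on that neighborhood~(\ref{eq:wonder-prop-NEW}) gives the two identities $f\circ\Nn=0$ and $\Nn(x_1')-x_1'\in\im Q$. By~(\ref{eq:P-NEW}), $\im Q=\im P=\ker(\Id-P)$ with $P=QD$, so the second identity is $(\Id-P)\bigl(\Nn(x_1')-x_1'\bigr)=0$. Differentiating both at $x_1$ and evaluating on $\xi_1$ yields
\begin{equation*}
   df|_{\Nn(x_1)}\bigl(d\Nn|_{x_1}\xi_1\bigr)=0
   ,\qquad
   (\Id-P)\bigl(d\Nn|_{x_1}\xi_1-\xi_1\bigr)=0 .
\end{equation*}
Because $x=\Nn(x_1)$, this says precisely that $d\Nn|_{x_1}\xi_1$ belongs to $\{v\in X\mid df|_x v=0,\ v-\xi_1\in\im Q\}$, which by~(\ref{eq:claim-unique}) is also the condition defining $\xi$.

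Finally I would show this affine set is a single point, whence $\xi=d\Nn|_{x_1}\xi_1$ and the proof is complete. If $v_1$ and $v_2$ both lie in it, then $w:=v_1-v_2\in\im Q$ and $df|_x w=0$; writing $w=Q\eta$ with $\eta\in Y$ and using $x=\Nn(x_1)\in B_\delta(x_0)$ (by~(\ref{eq:wonder-prop-NEW})) with~(\ref{eq:A.3.4-NEW}) (or the sharper~(\ref{eq:A.3.4-NEW-modified})), one gets $\norm{(df|_x-D)Q}\le\norm{df|_x-D}\cdot c\le\tfrac12$, so $df|_x Q=\Id_Y+(df|_x-D)Q$ is invertible by the Neumann series~(\ref{eq:Neumann}); hence $df|_x Q\eta=0$ forces $\eta=0$, $w=0$, $v_1=v_2$. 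The delicate point is making the differentiation legitimate: it rests on $U_0(\delta)$ being open, so that $\Nn$ really is defined on a full neighborhood of $x_1$, and on having rewritten the membership condition as the closed linear equation $(\Id-P)(\cdot)=0$, which survives differentiation; the concluding injectivity of $df|_x$ on $\im Q$ is routine. An alternative is purely computational: write down $\hat\psi_{(x_1,\xi_1)}$ for $\hat f=Tf$ from~(\ref{eq:psi}), solve $\hat\psi_{(x_1,\xi_1)}(x,\xi)=(x_1,\xi_1)$ to get $\xi=\bigl(\Id+Q\,df|_{\Nn(x_1)}-P\bigr)^{-1}(\Id-P)\xi_1$, and match this against the expression for $d\Nn$ from the proof of Theorem~\ref{thm:NP-map}; the uniqueness route is cleaner and avoids that bookkeeping.
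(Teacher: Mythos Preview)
Your proof is correct and follows the same overall strategy as the paper: verify that $d\Nn|_{x_1}\xi_1$ satisfies the defining properties of $\xi$ recorded in~(\ref{eq:claim-unique}), then invoke uniqueness. There are two small differences worth noting. First, for the condition $d\Nn|_{x_1}\xi_1-\xi_1\in\im Q$ you differentiate the identity $(\Id-P)(\Nn-\id)=0$, whereas the paper computes directly from formula~(\ref{eq:dNn}) and solves for $\eta$ with $df|_{x_1}Q\eta=df|_{x_1}\xi_1$. Second, and more interestingly, the paper appeals to the uniqueness statement of Proposition~\ref{prop:wonder-prop-NEW} for $\hat f$, which forces it to also verify the norm bound $\norm{d\Nn|_{x_1}\xi_1}<\delta$ (their Property~3, using~(\ref{eq:inverse-2}) and the smallness~(\ref{eq:xi_1-small})); you instead prove directly that the two linear conditions $df|_x v=0$ and $v-\xi_1\in\im Q$ already pin down a unique $v$, via injectivity of $df|_x Q$. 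Your route is a bit cleaner and makes the norm hypothesis on $\xi_1$ unnecessary for this particular step. One caveat: in your closing alternative, solving $\hat\psi_{(x_1,\xi_1)}(x,\xi)=(x_1,\xi_1)$ yields $\xi=(\Id+Q\,df|_{x}-P)^{-1}(\Id-P)\xi_1$ with $x=\Nn(x_1)$, not $df|_{x_1}$ as in~(\ref{eq:dNn}), so matching the two expressions is not immediate; since you do not rely on this route, it does not affect the proof.
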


\begin{proof}[Proof of Proposition~\ref{prop:TNf=NTf}]
After~(\ref{eq:claim-unique}) we already proved $x=\Nn(x_1)$.
By uniqueness it suffices to verify the (three) properties in~(\ref{eq:claim-unique})
for $d\Nn|_{x_1}\xi_1$ in place of $\xi$.

\smallskip\noindent
{\sc Property} 1: $d\Nn|_{x_1}\xi_1\in\ker df|_x$. Since $x=\Nn(x_1)$ and since
$x_1$ lies in the open domain $U_0(\delta)$
of the Newton-Picard map $\Nn$
in~(\ref{eq:NP-map}), hence so does $x_1+\tau\xi_1$ for any
sufficiently small $\tau>0$, we obtain that
\begin{equation*}
\begin{split}
   df|_x\circ d\Nn|_{x_1}\xi_1
   &= \left.\tfrac{d}{d\tau}\right|_{0} 
   f\circ\underbrace{\Nn(x_1+\tau\xi_1)}_{\in f^{-1}(0)}
   =0 .
\end{split}
\end{equation*}
\\
{\sc Property} 2: $d\Nn|_{x_1}\xi_1-\xi_1\in\im Q$. Observe that
\begin{equation*}
\begin{split}
   -\left(d\Nn|_{x_1}\xi_1-\xi_1\right)
   &\stackrel{\text{(\ref{eq:dNn})}}{=}
   \Bigl(\Id+Q \,df|_{x_1}-QD\Bigr)^{-1} Q\circ df|_{x_1}\xi_1
   =Q\eta .
\end{split}
\end{equation*}
That the last equality indeed holds for some $\eta\in Y$ is equivalent to
\[
   Q\circ df|_{x_1}\xi_1
   =\Bigl(\Id+Q \,df|_{x_1}-QD\Bigr) Q\eta
{\color{gray}\;
   =Q \,df|_{x_1}Q\eta
}
\]
for some $\eta\in Y$. Since $Q$ is injective it remains to find
an $\eta\in Y$ such that
\[
   df|_{x_1}\xi_1 =df|_{x_1}Q\eta .
\]
But the operator $df|_{x_1}Q$ is invertible since it is of the form
$\Id-B$ where $B:=(D-df|_{x_1})Q$ has norm $\norm{B}\le\frac{1}{4}$
due to $\norm{Q}\le c$ and by~(\ref{eq:A.3.4-NEW-modified});
cf. Remark~\ref{rem:A}.

\smallskip\noindent
{\sc Property} 3: $\norm{d\Nn|_{x_1}\xi_1}<\delta$.
As $\Id-QD=d\Nn|_{x_0}$, see Corollary~\ref{cor:NP-map}, we get
\begin{equation*}
\begin{split}
   \norm{d\Nn|_{x_1}\xi_1}
   &\stackrel{\text{(\ref{eq:dNn})}}{=}
   \norm{\left(\Id+Q \,df|_{x_1}-QD\right)^{-1} (\Id-QD)\xi_1}
   \\
   &\stackrel{{\color{white}(B.62)}}{\le}
   \norm{\left(\Id+Q \,df|_{x_1}-QD\right)^{-1}}\cdot
   {\color{red}\,\norm{d\Nn|_{x_0}}}\cdot \norm{\xi_1}
   \\
   & \stackrel{\text{(\ref{eq:inverse-2})}\atop\text{(\ref{eq:xi_1-small})}}{<}
   2 \tfrac{\delta}{8}.
\end{split}
\end{equation*}
This proves the identities~(\ref{eq:claim-TN})
and Proposition~\ref{prop:TNf=NTf}
\end{proof}

\medskip
We continue and  conclude the proof of Theorem~\ref{thm:tangent-map}.
Since $\xi=d\Nn|_{x_1}\xi_1$, by~(\ref{eq:claim-TN}),
the estimate~(\ref{eq:wonder-Tf}) multiplied by
${\color{cyan}\tfrac{\delta}{\hat\delta}}$ leads to
\begin{equation*}
\begin{split}
   \norm{(d\Nn|_{x_1}-\Id)\xi_1}_{X}
   &\le
   2c
   \max\left\{{\color{cyan}\tfrac{\delta}{\hat\delta}}\norm{f(x_1)}_Y,
   \norm{df|_{x_1}\xi_1}_Y\right\} .
\end{split}
\end{equation*}
This concludes the proof of Theorem~\ref{thm:tangent-map}.
\end{proof}

\boldmath
\section{Exponential decay}
\label{sec:exponential}
\unboldmath

\begin{theorem}[Linear uniform exponential decay]
\label{thm:lin-unif-exp-decay}
Pick $\eps\in(0,\sigma)$ where $\sigma=\sigma(A)$ is the spectral
gap~(\ref{eq:spec-gap}).
Let $m\in\N_0$. Suppose $W$ is a map of Sobolev class $W^{1,2}$
such that
\begin{equation}\label{eq:TW^s-app}
   W\colon[0,\infty)\to T^m\R^n,
   \qquad
   \p_sW+T^m\Nabla{}f(W)=0 .
\end{equation}
Then there is a positive constant $c(W)$,
depending continuously on $W$, such that
\begin{equation}\label{eq:exp-decay-W-app}
   \abs{W(s)}+\abs{\p_s W(s)}\le c(W)\cdot e^{-\eps s}
\end{equation}
for every $s\ge0$, 
\end{theorem}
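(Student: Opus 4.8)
The plan is to reduce the statement to a standard linear ODE estimate by exploiting non-degeneracy of the critical point. First I would observe that the map $T^m\Nabla{}f\colon T^m\R^n\to T^m\R^n$ vanishes at the origin (this is the footnote computation: $T^1\Nabla{}f(0,0)=(\Nabla{}f(0),D\Nabla{}f(0)\cdot 0)=(0,0)$, and inductively $T^m\Nabla{}f(0)=0$), and its linearization at the origin is the block-diagonal operator $T^m A$ built from $A=\diag(A_+,-A_-)$. In particular $T^m A$ is hyperbolic with the same spectral gap $\sigma=\sigma(A)$ as $A$, because taking tangent maps only replicates eigenvalues. So I would write
\begin{equation*}
   \p_s W + T^mA\, W = g(W),\qquad g(W):=T^mA\,W - T^m\Nabla{}f(W),
\end{equation*}
where, since $g(0)=0$ and $dg(0)=0$, a Taylor estimate gives a function $\rho(r)\to 0$ as $r\to 0$ with $\abs{g(W)}\le \rho(\abs{W})\abs{W}$ on a ball $\abs{W}\le r_0$.

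The second step is to show $W(s)\to 0$ as $s\to\infty$. Since $W\in W^{1,2}([0,\infty),T^m\R^n)$, the Sobolev embedding $W^{1,2}\hookrightarrow C^0$ (Lemma~\ref{le:sob-emb-T} on growing intervals, or directly on the half-line) bounds $\abs{W}$ and forces $\abs{W(s)}\to 0$; alternatively one uses that $W$ is a bona fide gradient flow line of the Morse function $T^mf$ (or of $f$ in the lifted sense) landing in the stable manifold of a non-degenerate critical point. Hence there is $S\ge 0$ with $\abs{W(s)}\le r_1$ for $s\ge S$, where $r_1$ is chosen so small that $\rho(r_1)\le \sigma - \eps$.

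The third and main step is the differential-inequality argument on $[S,\infty)$. With $u(s):=\tfrac12\abs{W(s)}^2$ one computes, using the equation and the block-hyperbolic splitting $W=(W^+,W^-)$ adapted to $T^mA=\diag(\text{pos.},\text{neg.})$, a bound of the shape
\begin{equation*}
   \text{(derivative of an energy/angle functional)}\ \le\ -(\sigma-\rho(r_1))\,\abs{W}^2\ \le\ -\eps\,\abs{W}^2
\end{equation*}
on the cone where $W$ is dominated by its stable component — and one shows $W$ stays in that cone for $s\ge S$ because it decays (this is exactly the content of the Robbin–Salamon Lemma~\cite[\S3]{Robbin:2001a} cited in the excerpt, in the form Lemma~\ref{le:exp-dec} where the decay rate of the "incoming" part is inherited by the whole solution). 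Grönwall then yields $\abs{W(s)}\le \abs{W(S)}\,e^{-\eps(s-S)}\le c_0(W)\,e^{-\eps s}$ for $s\ge S$; on the compact interval $[0,S]$ one uses the a priori $C^0$ bound to enlarge the constant. Finally $\abs{\p_s W(s)}=\abs{T^m\Nabla{}f(W(s))}\le \mu_m\abs{W(s)}$ by~(\ref{eq:mu_m})-type Taylor control, so $\abs{W(s)}+\abs{\p_s W(s)}\le (1+\mu_m)c_0(W)e^{-\eps s}=:c(W)e^{-\eps s}$. Continuous dependence of $c(W)$ on $W$ follows because each ingredient ($S$, the $C^0$ bound, $\abs{W(S)}$) varies continuously with $W$ in $W^{1,2}$.

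The step I expect to be the main obstacle is making the cone/Grönwall argument genuinely uniform and clean in the presence of the tangent-functor iteration: one must check that the hyperbolic structure and spectral gap of $T^mA$ really are inherited from $A$ with the same $\sigma$, and that the "decay of the stable part controls the full decay" mechanism (Robbin–Salamon) applies verbatim to the lifted equation~(\ref{eq:TW^s-app}) rather than to a genuine gradient flow. Once that inheritance is in place, the rest is the routine linear-hyperbolic exponential-decay estimate together with bootstrapping $\p_s W$ from the equation.
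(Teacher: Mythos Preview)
Your overall strategy---treat $W$ as a single vector in $T^m\R^n$, observe that the linearization $T^mA=A\oplus\cdots\oplus A$ is hyperbolic with the same gap $\sigma$, and invoke a standard decay estimate---is viable in principle, but the execution has a genuine gap. The displayed first-derivative inequality $(\text{energy})'\le -(\sigma-\rho(r_1))\,|W|^2$ is false as written: since the critical point is a saddle, $A$ (hence $T^mA$) is indefinite, so $\langle W,T^mA\,W\rangle$ has no sign and a Gr\"onwall on $u=\tfrac12|W|^2$ does not close. Your cone remedy (``$W$ stays in the stable cone because it decays'') presupposes the exponential dominance you are trying to prove; making it rigorous needs either a careful stable/unstable bootstrap or the second-derivative convexity estimate $\alpha''\ge(2\delta)^2\alpha$. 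You cite Lemma~\ref{le:exp-dec}, but that lemma is for $\xi'+\mathcal A(s)\xi=\eta$ with the forcing $\eta$ \emph{already known} to decay exponentially; in your setup $\eta=g(W)$ depends on $W$ itself, so the invocation is circular.

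The paper takes a structurally different route that sidesteps this circularity. Writing $W=(W_0,\dots,W_{2^m-1})$, it unfolds $T^m\nabla f$ into a \emph{triangular} system: $W_0$ solves the base gradient equation~(\ref{eq:super-eq-0}), and each $W_k$ with $k\ge1$ satisfies $\partial_sW_k+D\nabla f|_{W_0}W_k=-\eta_k$ where $\eta_k$ is a polynomial expression in the components $W_j$ with $j<k$ only (coming from the partition formula~(\ref{eq:super-eq-mod})). The proof is then a double induction: the nonlinear base case $m=0$ is handled separately by an action-energy argument; thereafter, for each $k$ in increasing order, the forcing $\eta_k$ decays exponentially by the inductive hypothesis, and Lemma~\ref{le:exp-dec} applies with $\mathcal A(s)=D\nabla f|_{W_0(s)}\to A$. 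Every application of the lemma is thus to a genuinely linear inhomogeneous problem with a known decaying right-hand side---no cone argument, no circularity. Your single-vector approach can be repaired (run the second-derivative convexity estimate directly on $\tfrac12|W|^2$, using $D(T^m\nabla f)(W(s))\to T^mA$), but as written the Gr\"onwall step fails and the appeal to Lemma~\ref{le:exp-dec} does not apply.
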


\boldmath
\subsection*{Preparation of proof}
\unboldmath

By $\Pp^*(\N)$ we denote the collection of all finite non-empty subsets of $\N$.
The evaluation map is defined by
\[
   e\colon\Pp^*(\N)\to\N,\quad
   D\mapsto\sum_{j\in D} 2^{j-1}
\]
and its inverse is the digit map
\[
   \Dd:=e^{-1}\colon\N\to\Pp^*(\N) .
\]
It can be described as follows.
Write $k\in\N$ in binary representation and map it
to the subset of $\N$ consisting of all positions of the binary
representation of $k$ at which you can find a $1$,
for example $9=1001\mapsto \{1,4\}$.

Given a finite non-empty subset $D\subset \N$, in symbols
$D\in\Pp^*(\N)$, we consider all partitions of $D$ into $\ell\in\N$ non-empty
subsets, namely
\[
   \mathrm{Part}_\ell(D):=\{\{A_1,\dots,A_\ell\}\subset\Pp(D)\mid
   \cup_{i=1}^\ell A_i=D, 
   A_i\cap A_j\stackrel{i\not=j}{=}\emptyset,
   \forall i\colon A_i\not=\emptyset\} .
\]
Given $\ell\in\N_0$, the ODE~(\ref{eq:TW^s-app}) for the map
$W\colon[0,\infty)\to T^\ell\R^n$ is equivalent to a system of $2^\ell$
ODEs for $2^\ell$ maps $W_0,W_1,\dots, W_{2^\ell-1}\colon[0,\infty)\to\R^n$,
namely
\begin{equation}\label{eq:super-eq-0}
   \p_sW_0+\Nabla{}f(W_0)=0
\end{equation}
and the $2^\ell-1$ equations
\begin{equation}\label{eq:super-eq}
\begin{split}
   0
   &=\p_s W_k
   +\sum_{\ell\in\N}
   \biggl(
      \sum_{\{A_1,\dots,A_\ell\}\in \mathrm{Part}_\ell \Dd(k)}
   D^\ell\Nabla{}f|_{W_0} [W_{e(A_1)}, \dots ,W_{e(A_\ell)}]
   \biggr)
\end{split}
\end{equation}
where $k=1,\dots,2^\ell-1$.

\begin{remark}[Reformulation of~(\ref{eq:super-eq})]
Given $k\in\N$, let $\mathfrak{S}(k)$ be the digit sum
of the binary representation of $k$, also
referred to as the Hamming weight.
Observe that $\mathfrak{S}(k)$ is the cardinality of $\Dd(k)$.
Note that for $\ell>\mathfrak{S}(k)$ the partition set
$\mathrm{Part}_\ell \Dd(k)=\emptyset$ is empty.
Note also that $\mathrm{Part}_1 \Dd(k)=\{\{\Dd(k)\}\}$.
Therefore we can write~(\ref{eq:super-eq})
equivalently as the finite sum
\begin{equation}\label{eq:super-eq-mod}
\begin{split}
   0
   &=\p_s W_k
   +D\Nabla{}f|_{W_0} [W_k]
\\
   &\quad
   +
   \underbrace{
   \sum_{\ell=2}^{\mathfrak{S}(k)}
   \biggl(
      \sum_{\{A_1,\dots,A_\ell\}\in \mathrm{Part}_\ell \Dd(k)}
   D^\ell\Nabla{}f|_{W_0} [W_{e(A_1)}, \dots ,W_{e(A_\ell)}]
   \biggr)
   }_{=:\eta}
   .
\end{split}
\end{equation}
In the special case where $k=2^m$ we have
$\mathfrak{S}(k)=1$, hence~(\ref{eq:super-eq-mod}) simplifies to
\begin{equation}\label{eq:super-eq-k=2^m}
   0=\p_s W_{2^m}+D\Nabla{}f|_{W_0} [W_{2^m}].
\end{equation}
\end{remark}
The following table illustrates~(\ref{eq:super-eq-mod})
for $k=0,\dots,7$. It is written in binary notation, so the structure of the
system becomes visible
\begin{equation*}
\begin{split}
   \text{0)\:\: }0&=\p_s W_0+\Nabla{}f(W_0)
\\
   \text{1) \:\: }0&=\p_s W_1+D\Nabla{}f|_{W_0} W_1
\\
   \text{10) \:\: }0&=\p_s W_{10}+D\Nabla{}f|_{W_0} W_{10}
\\
   \text{11) \:\: }0&=\p_s W_{11}+D^2\Nabla{}f|_{W_0} [W_1,W_{10}]
   +D\Nabla{}f|_{W_0} W_{11} 
\\
   \text{100) \:\: }0&=\p_s W_{100}+D\Nabla{}f|_{W_0} W_{100}
\\
   \text{101) \:\: }0&=\p_s W_{101}+D^2\Nabla{}f|_{W_0} [W_1,W_{100}]
   +D\Nabla{}f|_{W_0} W_{101} 
\\
   \text{110) \:\: }0&=\p_s W_{110}+D^2\Nabla{}f|_{W_0} [W_{10},W_{100}]
   +D\Nabla{}f|_{W_0} W_{110} 
\\
   \text{111) \:\: }0&=\p_s W_{111}
   +D^3\Nabla{}f|_{W_0} [W_1,W_{10},W_{100}]
   +D^2\Nabla{}f|_{W_0} [W_{10},W_{101}]
\\
   &\quad
   +D^2\Nabla{}f|_{W_0} [W_1,W_{110}]
   +D^2\Nabla{}f|_{W_0} [W_{11},W_{100}]
   +D\Nabla{}f|_{W_0} W_{111} .
\end{split}
\end{equation*}

\begin{lemma}\label{le:lin-m}
Given $m\in\N_0$, consider maps
$W_0,W_1,\dots, W_{2^m-1}\in W^{1,2}([0,\infty),\R^n)$ that
satisfy the ODE system~(\ref{eq:super-eq-0}) and~(\ref{eq:super-eq})
for every $k=1,\dots,2^m-1$.
Then the tuple $W:=(W_0,\dots,W_{2^m -1}) \in W^{1,2}([0,\infty),\R^{n\cdot 2^m})$ lies
in the $m$-fold tangent space $T^m \Ww^{\rm s}$ which means that
\[
   \p_s W+T^m\Nabla{}f(W)=0 .
\]
\end{lemma}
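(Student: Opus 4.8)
The plan is to show that the ``obvious'' identification works: the $m$-fold tangent bundle $T^m\R^n$ is canonically $\R^{n\cdot 2^m}$, and under this identification the map $T^m\Nabla{}f$ is given coordinate-wise by precisely the right-hand sides appearing in~(\ref{eq:super-eq-0}) and~(\ref{eq:super-eq-mod}). Granting that, the equation $\p_s W + T^m\Nabla{}f(W)=0$ for $W=(W_0,\dots,W_{2^m-1})$ is literally the system~(\ref{eq:super-eq-0}) together with the $2^m-1$ equations~(\ref{eq:super-eq}), so the lemma is immediate. Thus the real content is the identification of $T^m\Nabla{}f$ with this system, which I would prove by induction on $m$.

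First I would set up notation: for a smooth map $g\colon\R^n\to\R^n$, the tangent map $Tg\colon T\R^n=\R^n\oplus\R^n\to\R^n\oplus\R^n$ is $(x,\xi)\mapsto(g(x),Dg|_x\xi)$, and $T^mg=T(T^{m-1}g)$. The indexing convention is the digit map $\Dd\colon\N\to\Pp^*(\N)$ from the excerpt: the component $W_k$ of $W\in T^m\R^n$ sits in the ``slot'' labeled by the binary digits of $k$, with $W_0$ the base point and $W_{2^j}$ the ``$j$-th primary tangent direction.'' The base case $m=0$ is trivial ($T^0\Nabla{}f=\Nabla{}f$, equation~(\ref{eq:super-eq-0})). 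For the inductive step, assume $T^{m-1}\Nabla{}f$ has the claimed form in the variables $W_0,\dots,W_{2^{m-1}-1}$; applying $T$ adds a second copy of each slot, $W_{2^{m-1}},\dots,W_{2^m-1}$, and the new components of $T^m\Nabla{}f$ are obtained by differentiating the old components along the full tangent vector $(W_{2^{m-1}},\dots,W_{2^m-1})$. Carrying out this differentiation — using the multilinearity of the derivatives $D^\ell\Nabla{}f$, the Leibniz rule, and the fact that differentiating $D^\ell\Nabla{}f|_{W_0}[W_{e(A_1)},\dots,W_{e(A_\ell)}]$ produces one term from differentiating the evaluation point $W_0$ (raising the order to $\ell+1$ and inserting the new direction) and $\ell$ terms from differentiating each argument $W_{e(A_i)}$ — reproduces exactly the partition sum in~(\ref{eq:super-eq}) for the new indices $k$. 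This is a Faà di Bruno / set-partition bookkeeping argument: the combinatorial identity is that the partitions of $\Dd(k)$ for $k$ in the ``new'' range correspond to partitions of $\Dd(k')$ for the associated ``old'' index $k'$ with the highest bit attached in all possible ways to one block or forming its own block.

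The main obstacle is precisely this bookkeeping: verifying that the recursion $T^m\Nabla{}f = T(T^{m-1}\Nabla{}f)$, when written out componentwise and compared slot by slot, matches the partition-sum formula~(\ref{eq:super-eq}). The key point to get right is how the set $\mathrm{Part}_\ell\Dd(k)$ behaves under adding a new largest element to the ground set $\Dd(k)$, and how this interacts with the Leibniz rule when differentiating a term of ``order $\ell$'' (i.e. an $\ell$-fold derivative applied to $\ell$ arguments). Once one checks this for the passage from $m-1$ to $m$ on the generators and notes that both $\p_sW$ and $T^m\Nabla{}f(W)$ are determined componentwise, the equality $\p_s W + T^m\Nabla{}f(W)=0$ is synonymous with the system~(\ref{eq:super-eq-0})--(\ref{eq:super-eq}), which is the hypothesis; hence $W\in T^m\Ww^{\rm s}$. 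I would also remark that the same argument, run on the half-line $(-\infty,0]$, gives the analogous statement for $T^m\Ww^{\rm u}$, which is what Proposition~\ref{prop:approx-zero} actually invokes.
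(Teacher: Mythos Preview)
Your proposal is correct and follows essentially the same route as the paper: induction on $m$, with the inductive step carried by the Leibniz-rule differentiation of each term $D^\ell\Nabla{}f|_{W_0}[W_{e(A_1)},\dots,W_{e(A_\ell)}]$ and the combinatorial identity that $\mathrm{Part}_\ell\Dd(k)$ decomposes according to whether the new top bit joins an existing block (the paper's injections $\iota_j$) or forms its own block (the paper's injection $I$). The paper writes out this decomposition explicitly as~(\ref{eq:union}) and separates the cases $k<2^m$, $k=2^m$, and $k>2^m$, but the content is exactly the bookkeeping you describe.
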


\begin{proof}
The proof is by induction on $m\in\N$.
\smallskip
\newline
\textbf{Case \boldmath$m=0$.}
True by assumption.
\smallskip
\newline
\textbf{Induction step \boldmath$m\Rightarrow m+1$.}
There are three cases I-III.
I. For $k\in\{1,\dots,2^m-1\}$ equation~(\ref{eq:super-eq})
holds directly by induction hypothesis.
II. For $k\in\{2^m+1,\dots,2^{m+1}-1\}$ we linearize~(\ref{eq:super-eq})
with respect to $W_{k-2^m}$. This yields
\begin{equation}\label{eq:super-eq-ind}
\begin{split}
   0
   &=\p_s W_k
   +\sum_{\ell\in\N}
   \sum_{\{A_1,\dots,A_\ell\}\atop\in\mathrm{Part}_\ell \Dd(k-2^m)}
\\
   &\biggl(
   \sum_{j=1}^\ell
   D^\ell\Nabla{}f|_{W_0} [W_{e(A_1)}, \dots ,W_{e(A_{j-1})},
   W_{e(A_j)+2^m}, W_{e(A_{j+1})}, \dots ,W_{e(A_{\ell})}]
\\
   &+D^{\ell+1}\Nabla{}f|_{W_0}
   [W_{2^m},W_{e(A_1)}, \dots ,W_{e(A_{\ell})}]
   \biggr)
\\
   &=\p_s W_k
   +\sum_{\ell\in\N}
   \biggl(
      \sum_{\{A_1,\dots,A_\ell\}\in \mathrm{Part}_\ell \Dd(k)}
   D^\ell\Nabla{}f|_{W_0} [W_{e(A_1)}, \dots ,W_{e(A_\ell)}]
   \biggr)
\end{split}
\end{equation}
To see why the second equation in~(\ref{eq:super-eq-ind}) holds
note the identity of digit sets
\[
   \Dd(k)=\Dd(k-2^m)\cup\{m+1\} .
\]
Moreover, consider the injections defined for $j=1,\dots,\ell$ by
\begin{equation*}
\begin{split}
   \iota_j\colon \mathrm{Part}_\ell (\Dd(k-2^m))
   &\INTO \mathrm{Part}_\ell (\Dd(k)) =
   \mathrm{Part}_\ell (\Dd(k-2^m)\cup\{m+1\})
\\
   \{A_1,\dots,A_\ell\}
   &\mapsto \{A_1,\dots,A_{j-1},A_j\cup\{m+1\},A_{j+1},\dots,A_\ell\}
\end{split}
\end{equation*}
and the injection defined by
\begin{equation*}
\begin{split}
   I\colon \mathrm{Part}_{\ell-1} (\Dd(k-2^m))
   &\INTO \mathrm{Part}_\ell (\Dd(k))
\\
   \{A_1,\dots,A_{\ell-1}\}
   &\mapsto \{\{m+1\},A_1,\dots,A_{\ell-1}\}.
\end{split}
\end{equation*}
Using this notion we can write $\mathrm{Part}_\ell \Dd(k)$
as the union of pairwise disjoint subsets, namely
\begin{equation}\label{eq:union}
\begin{split}
   \mathrm{Part}_\ell \Dd(k) 
   &=\biggl(\bigcup_{j=1}^\ell\iota_j(\mathrm{Part}_\ell (\Dd(k-2^m)))\biggr)
   \cup I(\mathrm{Part}_{\ell-1} (\Dd(k-2^m))).
\end{split}
\end{equation}
Now the second equation in~(\ref{eq:super-eq-ind})
follows from~(\ref{eq:union}).

\smallskip
III. It remains to consider the case $k=2^m$.
Linearizing~(\ref{eq:super-eq-0}) with respect to $W_0$
in direction $W_{2^m}$ we obtain
\begin{equation*}
\begin{split}
   0=\p_s W_{2^m}
   +D\Nabla{}f|_{W_0} W_{2^m} 
\end{split}
\end{equation*}
and this equation coincides with~(\ref{eq:super-eq-k=2^m}).
This proves Lemma~\ref{le:lin-m}.
\end{proof}

\boldmath
\subsection*{Proof of exponential decay}
\unboldmath

\begin{proof}[Proof of Theorem~\ref{thm:lin-unif-exp-decay} -- Exponential decay]
The proof is by induction on $m$.
\smallskip
\newline
\textbf{Case \boldmath$m=0$.}
This follows for instance from the action-energy inequality;
see e.g.~\cite{Frauenfelder:2022g}.

\smallskip
\noindent
\textbf{Induction step \boldmath$m\Rightarrow m+1$.}
Suppose~(\ref{eq:exp-decay-W-app}) is true for $m$.
Then we want to show~(\ref{eq:exp-decay-W-app}) for $m+1$.
By induction hypothesis $W_k$ and its derivative $\p_s W_k$ decay
exponentially for $k=0,\dots2^m-1$.
It remains to show that as well $W_k$ and its derivative
$\p_s W_k$ decay exponentially for $k=2^m,\dots,2^{m+1}-1$.
This follows from Lemma~\ref{le:exp-dec} below
in view of~(\ref{eq:super-eq-mod}) combined with the induction hypothesis.
More precisely, we prove this by induction on $k$.
In the notation $A$, $\xi$, $\eta$
of Lemma~\ref{le:exp-dec} we have
$W_k=\xi$, $D\Nabla{}f|_{W_0}=A$ and $\eta$ is the sum
indicated in~(\ref{eq:super-eq-mod}).

Observe that if $\ell\ge 2$ and $\{A_1,\dots,A_\ell\}\in \mathrm{Part}_\ell(\Dd(k))$
then $e(A_j)<k$ for $j=1,\dots,\ell$.
Therefore by induction hypothesis $W_{e(A_j)}$ decays exponentially
so that $\eta$ decays exponentially.
Now the exponential decay of $W_k$ follows from
Lemma~\ref{le:exp-dec}.
\end{proof}

\begin{lemma}\label{le:exp-dec}
Consider a continuously differentiable family of quadratic matrizes
$\A\colon [0,\infty)\to\R^{n\times n}$ and an invertible symmetric
matrix $A\in \R^{n\times n}$ with
\[
   \lim_{s\to\infty} \norm{\A(s)-A}=0=\lim_{s\to\infty}
   \norm{\A^\prime(s)} 
   ,\qquad
   \A^\prime(s):=\tfrac{d}{ds}\A(s) .
\]
Let $\sigma=\sigma(A)>0$ be the spectral gap, see~(\ref{eq:spec-gap}).
Let $\xi,\eta\colon[0,\infty)\to\R^n$ be continuously differentiable
maps such that $\xi$ is of Sobolev class $W^{1,2}$ and
\begin{equation}\label{eq:12}
   \xi^\prime(s)+\A(s)\xi(s)=\eta(s)
\end{equation}
for every $s\ge 0$. Suppose that there are constants $C>0$
and $\eps\in(0,\sigma)$ such that
\begin{equation}\label{eq:13}
   \abs{\eta(s)}+\abs{\eta^\prime(s)}\le C e^{-\eps s}
\end{equation}
for every $s\ge 0$.
Then there is a positive constant $c$, depending continuously on the $W^{1,2}$
norm of $\xi$ and the constant $C$, such that
\[
   \abs{\xi(s)}
   \le c e^{-\eps s} 
\]
for every $s\ge 0$.
\end{lemma}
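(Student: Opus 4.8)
The plan is to use the spectral decomposition of the limiting symmetric matrix $A$ together with a tail $L^2$ energy estimate, and then bootstrap to a pointwise bound. First I would fix a constant $\delta_0>0$ with $\sqrt2\,\delta_0<\sigma-\eps$ and choose $S\ge 0$, depending only on the family $\A$, so large that $\norm{\A(s)-A}\le\delta_0$ for all $s\ge S$; the bounded interval $[0,S]$ will be handled at the very end by the Sobolev embedding $W^{1,2}([0,\infty))\hookrightarrow C^0$. On $[S,\infty)$, let $V_+$ and $V_-$ be the orthogonal eigenspaces of $A$ on which $A$ is positive, respectively negative, definite, with orthogonal projections $P_\pm$; since $A$ is invertible one has $\R^n=V_+\oplus V_-$, and the eigenvalues of $A_+:=A|_{V_+}$ and of $A_-:=-A|_{V_-}$ are all $\ge\sigma$ by~(\ref{eq:spec-gap}). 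Putting $\xi_\pm:=P_\pm\xi$ and using that $P_\pm$ commute with $A$, equation~(\ref{eq:12}) decouples into $\xi_+'+A_+\xi_+=\eta_+$ and $\xi_-'-A_-\xi_-=\eta_-$, where $\eta_\pm:=P_\pm\eta-P_\pm(\A-A)\xi$ satisfies $\abs{\eta_\pm(s)}\le\abs{\eta(s)}+\delta_0\abs{\xi(s)}\le Ce^{-\eps s}+\delta_0\abs{\xi(s)}$ for $s\ge S$, by~(\ref{eq:13}).

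Next I would set $E(s):=\int_s^\infty\abs{\xi(t)}^2\,dt$ and derive a closed differential inequality for it. Integrating $\tfrac12(\abs{\xi_+}^2)'=-\inner{A_+\xi_+}{\xi_+}+\inner{\eta_+}{\xi_+}$ and $\tfrac12(\abs{\xi_-}^2)'=\inner{A_-\xi_-}{\xi_-}+\inner{\eta_-}{\xi_-}$ from $s$ to $\infty$ (using $\xi_\pm\in L^2$ and $\xi_\pm(t)\to0$, both from $\xi\in W^{1,2}$) and adding gives $\sigma E(s)\le\tfrac12\abs{\xi(s)}^2+\int_s^\infty(\abs\eta+\delta_0\abs\xi)(\abs{\xi_+}+\abs{\xi_-})\,dt$. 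Estimating $\abs{\xi_+}+\abs{\xi_-}\le\sqrt2\,\abs\xi$, using Cauchy--Schwarz together with $\int_s^\infty\abs\eta^2\le\tfrac{C^2}{2\eps}e^{-2\eps s}$, and substituting $\abs{\xi(s)}^2=-E'(s)$, this turns into
\[
   \bigl(\sigma-\sqrt2\,\delta_0\bigr)E\le-\tfrac12 E'+\tfrac{C}{\sqrt\eps}\,e^{-\eps s}\sqrt E .
\]
With $u:=\sqrt E$ and $\sigma':=\sigma-\sqrt2\,\delta_0$ (so $\sigma'>\eps$) this reads $u'\le-\sigma'u+\tfrac{C}{\sqrt\eps}e^{-\eps s}$, hence $\tfrac{d}{ds}(e^{\sigma's}u)\le\tfrac{C}{\sqrt\eps}e^{(\sigma'-\eps)s}$; integrating from $S$ to $s$ and using $\sigma'>\eps$ and $u(S)=\norm{\xi}_{L^2([S,\infty))}\le\norm\xi_{1,2}$ yields $E(s)\le C_E\,e^{-2\eps s}$ for $s\ge S$, with $C_E$ a continuous (in fact polynomial) function of $\norm\xi_{1,2}$ and $C$. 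If $E$ happens to vanish somewhere, then $\xi$ vanishes on a half-line and the bound is trivial there; otherwise $u=\sqrt E$ is differentiable where needed and the manipulation is legitimate.

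Finally I would pass to the pointwise estimate. For $s\ge S$ one has $\abs{\xi(s)}^2=-2\int_s^\infty\inner{\xi}{\xi'}\,dt\le 2\sqrt{E(s)}\bigl(\int_s^\infty\abs{\xi'}^2\bigr)^{1/2}$, while from $\xi'=\eta-\A\xi$ and $\norm{\A(t)}\le\norm A+\delta_0$ for $t\ge S$ one gets $\int_s^\infty\abs{\xi'}^2\le\tfrac{C^2}{\eps}e^{-2\eps s}+2(\norm A+\delta_0)^2 E(s)\le C_3\,e^{-2\eps s}$; combining these gives $\abs{\xi(s)}\le c'\,e^{-\eps s}$ on $[S,\infty)$, with $c'$ depending continuously on $\norm\xi_{1,2}$ and $C$. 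On $[0,S]$ the Sobolev embedding gives $\abs{\xi(s)}\le\norm\xi_{1,2}\le\norm\xi_{1,2}\,e^{\eps S}e^{-\eps s}$, and taking $c:=\max\{c',\norm\xi_{1,2}e^{\eps S}\}$ proves the lemma. I expect the main (indeed the only) real obstacle to be the bookkeeping that forces the coefficient $\sigma-\sqrt2\,\delta_0$ of $E$ to remain strictly larger than $\eps$ — this is precisely where both the spectral gap and the strict inequality $\eps<\sigma$ enter, and it dictates the choice of $\delta_0$ and therefore of $S$; note in passing that the hypothesis $\norm{\A'(s)}\to0$ is not actually used for this statement.
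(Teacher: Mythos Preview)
Your argument is correct and genuinely different from the paper's. The paper follows the Robbin--Salamon template: it sets $\alpha(s)=\tfrac12\abs{\xi(s)}^2$, computes $\alpha''$, and after a lengthy Cauchy--Schwarz/Peter--Paul juggle obtains a second-order inequality $\alpha''\ge(2\delta)^2\alpha-c_0e^{-2\eps s}$ with $2\delta^2=\sigma^2+\eps^2>2\eps^2$; the conclusion then comes from a convexity/comparison argument applied to the shifted function $\beta=\alpha+\tfrac{c_0e^{-2\eps s}}{(2\eps)^2-(2\delta)^2}$. Your route instead splits $\xi$ along the stable/unstable eigenspaces of $A$, combines the two first-order energy identities into a single first-order differential inequality for the tail energy $E(s)=\int_s^\infty\abs{\xi}^2$, and integrates. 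This is more elementary (no second derivatives, no $\alpha''$ computation) and, as you correctly observe, makes the hypothesis $\norm{\A'(s)}\to0$ superfluous; in the paper's proof that hypothesis is genuinely used, since the term $-\INNER{\xi}{\A'\xi}$ appears in $\alpha''$ and must be absorbed. The price you pay is the bootstrap from $E$ back to a pointwise bound, but that step is cheap. One cosmetic point: in the added inequalities you actually get the sharper $\tfrac12\abs{\xi_+(s)}^2-\tfrac12\abs{\xi_-(s)}^2$ on the right, which you then bound above by $\tfrac12\abs{\xi(s)}^2=-\tfrac12E'(s)$; this is fine but worth stating explicitly.
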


Observe that the exponential decay rate of $\eta$ is inherited by $\xi$,
as opposed to~\cite[Le.\,3.1]{Robbin:2001a}.

\begin{proof}
We follow the proof of~\cite[Le.\,3.1]{Robbin:2001a}.
We shall employ the following facts and assumptions.
The norms of a quadratic real matrix $B$ and its
transpose $B^t$ are equal.
   \todo[color=yellow!40]{\small [Conway:1985a] $\norm{A}=\norm{A^t}$}
By definition of the spectral gap $\sigma>0$ it holds that
\[
   \abs{Av}\ge \sigma\abs{v}
\]
for every $v\in\R^n$. Given $\delta>0$ and $\eps\in(0,\sigma)$,
by assumption there is a large
time $s_0=s_0(\delta;\sigma,\eps)>0$ such that
\begin{equation}\label{eq:ass-exp}
   \Bigl(\norm{\A^\prime(s)}
   +\bigl(
   \tfrac{13}{4}
   +16\tfrac{\sigma^2}{(\sigma^2-\eps^2)}
    \bigr)
   \norm{\A(s)-A}^2\Bigr)
   \le\frac{\sigma^2-\eps^2}{4}
\end{equation}
pointwise for $s\ge s_0$.
The function defined for $s\ge 0$ by
\[
   \alpha(s):=\tfrac12\abs{\xi(s)}^2
\]
has derivatives
\[
   \alpha^\prime
   =\INNER{\xi}{\xi^\prime}
   =\INNER{\xi}{\eta-\A\xi}
\]
and
\[
   \alpha^{\prime\prime}
   =
   \INNER{\xi^\prime}{\eta-(\A+\A^t)\xi}
   +\INNER{\xi}{\eta^\prime-\A^\prime\xi}.
\]
Substitute $\xi^\prime$ according
to~(\ref{eq:12}), then add $-A+A$ various times, to obtain
\begin{equation*}
\begin{split}
   \alpha^{\prime\prime}
   &=\abs{\A\xi}^2 
   +\abs{\eta}^2
   -2\INNER{\A\xi}{\eta}
   -\INNER{\eta}{\A^t\xi}
   +\INNER{\xi}{\eta^\prime-\A^\prime\xi}
   +\INNER{\A\xi}{\A^t\xi}
\\
   &=\abs{(\A-A+A)\xi}^2 
   +\abs{\eta}^2
   -2\INNER{(\A-A)\xi}{\eta}   -2\INNER{A\xi}{\eta}
   -\INNER{\eta}{(\A^t-A)\xi}
\\
   &\quad
   -\INNER{\eta}{A\xi}
   +\INNER{\xi}{\eta^\prime-\A^\prime\xi}
   +\INNER{(\A-A+A)\xi}{(\A^t-A+A)\xi}
\\
   &=\abs{(\A-A)\xi}^2 +\abs{A\xi}^2
   +2\INNER{(\A-A)\xi}{A\xi}
   \\
   &\quad
   +\abs{\eta}^2
   -2\INNER{(\A-A)\xi}{\eta}   -3\INNER{A\xi}{\eta}
   -\INNER{\eta}{(\A-A)^t\xi}
   +\INNER{\xi}{\eta^\prime}
   -\INNER{\xi}{\A^\prime\xi}
\\
   &\quad
   +\INNER{(\A-A)\xi}{(\A-A)^t\xi}
   +\INNER{A\xi}{(\A-A)^t\xi}
   +\INNER{(\A-A)\xi}{A\xi}
   +\abs{A\xi}^2 .
\end{split}
\end{equation*}
Observe that $\abs{A\xi}^2$ appears twice and,
in the following, we write this coefficient in the form
$2=\frac{\sigma^2+\eps^2}{\sigma^2}+\frac{\sigma^2-\eps^2}{\sigma^2}$.
By Cauchy-Schwarz and Peter-Paul\footnote{
  $ab\le\frac{a^2+b^2}{2}$ whenever $a,b\ge0$
  }
we obtain
\begin{equation*}
\begin{split}
   \alpha^{\prime\prime}
   &\ge
   \tfrac{\sigma^2+\eps^2}{\sigma^2} \abs{A\xi}^2
   +\tfrac{\sigma^2-\eps^2}{\sigma^2} \abs{A\xi}^2 
   + \abs{\eta}^2
   {\color{brown} \;-\;3\norm{\A-A}\cdot\abs{\xi}\cdot\abs{\eta}\;}
   \underline{\;-\;3\abs{A\xi}\cdot\abs{\eta}}
\\
   &\quad
   -\abs{\xi}\cdot\abs{\eta^\prime}-\norm{\A^\prime}\cdot\abs{\xi}^2
   -\norm{\A-A}^2\cdot\abs{\xi}^2
   {\color{cyan} \;-\;4\abs{A\xi}\cdot \norm{\A-A}\cdot\abs{\xi}}
\\
   &\ge
   (\sigma^2+\eps^2)\abs{\xi}^2
   +\tfrac{\sigma^2-\eps^2}{{\color{red} 2}\sigma^2} \underbrace{\abs{A\xi}^2}_{\ge \sigma^2\abs{\xi}^2}
   +
\underbrace{
   \left(\tfrac{\sigma^2-\eps^2}{{\color{red} 2}\sigma^2}
   \underline{\;-\;\tfrac{\sigma^2-\eps^2}{4\sigma^2}}
   {\color{cyan} \;-\;\tfrac{\sigma^2-\eps^2}{4\sigma^2}}
   \right)
}_{=0}
   \abs{A\xi}^2 
   -\abs{\xi}\cdot\abs{\eta^\prime}
\\
   &\quad
   -
\underbrace{
   \Bigl(\norm{\A^\prime}
   +\bigl(
   1{\color{brown}+\tfrac{9}{4}} 
{\color{cyan} \;+\; 4^2 \tfrac{\sigma^2}{(\sigma^2-\eps^2)}
}
    \bigr)
   \norm{\A-A}^2\Bigr)
}_{\text{$\le\frac{\sigma^2-\eps^2}{4}$ by~(\ref{eq:ass-exp})}}
   \abs{\xi}^2
   +
   (1{\color{brown}-1}\underline{\;-\;\tfrac{3^2\sigma^2}{(\sigma^2-\eps^2)}}
) \abs{\eta}^2 
\\
   &\ge
   (\sigma^2+\eps^2)\abs{\xi}^2
   +\tfrac{\sigma^2-\eps^2}{2} (1-\tfrac12-\tfrac12)\abs{\xi}^2
   -\tfrac{3^2\sigma^2}{\sigma^2-\eps^2}\abs{\eta}^2
   -\tfrac{1}{\sigma^2-\eps^2}\abs{\eta^\prime}^2
\\
   &\ge (2\delta)^2\alpha - c_0 e^{-2\eps s}
   ,\qquad
   2\delta^2:=\sigma^2+\eps^2
   ,\quad
   c_0:=\tfrac{9\sigma^2+1}{\sigma^2-\eps^2} C^2 ,
\end{split}
\end{equation*}
pointwise for $s\ge s_0$.
Inequality two and three is by $\abs{A\xi}\ge\sigma\abs{\xi}$,
the final inequality by the $\eta$, $\eta^\prime$ decay assumption~(\ref{eq:13}).
Observe the estimate
$$
   2\delta=2\sqrt{\tfrac{\sigma^2+\eps^2}{2}}
   =2\sqrt{\eps^2+\tfrac{\sigma^2-\eps^2}{2}}
   >2\eps .
$$
The function defined by
\[
   \beta(s)
   :=\alpha(s)+\frac{c_0 e^{-2\eps s}}{(2\eps)^2-(2\delta)^2}
\]
satisfies
\begin{equation*}
\begin{split}
   \beta^{\prime\prime}(s)
   &=\alpha^{\prime\prime}(s)
   +\tfrac{c_0(2\eps)^2 e^{-2\eps s}}{(2\eps)^2-(2\delta)^2} 
\\
   &\ge (2\delta)^2\alpha
   +\tfrac{c_0(2\eps)^2 e^{-2\eps s}}{(2\eps)^2-(2\delta)^2} 
   -c_0e^{-2\eps s}
{\color{gray}\,
   \tfrac{(2\eps)^2-(2\delta)^2}{(2\eps)^2-(2\delta)^2}
}
\\
   &=(2\delta)^2\beta(s)
\end{split}
\end{equation*}
for $s\ge s_0$. This implies, exactly as in the proof
of~\cite[Le.\,3.1]{Robbin:2001a}, the following.
Firstly $\frac{d}{ds} e^{2\delta s}\beta(s)\le 0$ for $s\ge s_0$,\footnote{
  Here boundedness of $\abs{\xi(s)}$ enters which is true by the assumption
  $\xi\in W^{1,2}$.
  }
so secondly
$
   e^{2\delta s_0}\beta(s_0)\ge
   e^{2\delta s} \beta(s)
$,
and therefore thirdly
$\beta(s)\le e^{-2\delta(s-s_0)}\beta(s_0)$ decays even faster than
$e^{-2\eps s}$.
Thus
\[
   \alpha(s)
   =\beta(s)-\tfrac{c_0 e^{-2\eps s}}{(2\eps)^2-(2\delta)^2}
   <\left(e^{-(2\delta-2\eps)s} e^{2\delta s_0}\beta(s_0)
   +\tfrac{c_0}{(2\delta)^2-(2\eps)^2}\right) e^{-2\eps s}
\]
and therefore
\[
   \abs{\xi(s)}
   =\sqrt{2\alpha(s)}
   <
   \sqrt{2e^{-(2\delta-2\eps)s} e^{2\delta s_0}\beta(s_0)+\tfrac{2c_0}{(2\delta)^2-(2\eps)^2}}
   \; e^{-\eps s}
\]
for $s\ge s_0$.
\end{proof}

\bibliographystyle{alpha}
\addcontentsline{toc}{section}{References}
\bibliography{$HOME/Dropbox/0-Libraries+app-data/Bibdesk-BibFiles/library_math,$HOME/Dropbox/0-Libraries+app-data/Bibdesk-BibFiles/library_math_2020,$HOME/Dropbox/0-Libraries+app-data/Bibdesk-BibFiles/library_physics}{}

%


\end{document}